\documentclass[10pt,twoside,final]{amsart}

\usepackage[english]{babel}
\usepackage{graphicx,epstopdf,epsfig}
\usepackage{amsfonts,epsfig,fancyhdr,graphics,amsmath,amssymb}

\title{Products of involutons  in symplectic groups II}

\newtheorem{definition}{Definition}[section]
\newtheorem{theorem}{Theorem}[section]
\newtheorem{proposition}[theorem]{Proposition}
\newtheorem{lemma}[theorem]{Lemma}
\newtheorem{corollary}[theorem]{Corollary}
\newtheorem{remark}[theorem]{Remark}

\newcommand {\ch }{\operatorname{char}}
\newcommand {\SpG }{\operatorname{Sp}}

\newcommand {\GL }{\mathrm{GL}}

\newcommand {\Fix }{\mathrm{Fix}}

\newcommand {\GF }{\mathrm{GF}}
\newcommand {\Idm }{\mathrm{I}}
\newcommand {\Jordan }{\mathrm{J}}
\newcommand {\RevIdm }{\mathrm{R}}
\newcommand {\rad }{\operatorname{rad}}
\newcommand {\disc }{\operatorname{disc}}
\newcommand {\ind }{\operatorname{ind}}

\usepackage{ifdraft}
\ifdraft{\usepackage[outer]{showlabels}
	\usepackage{todonotes}}{}
	\ifdraft{\usepackage[outer]{showlabels}
		\usepackage{todonotes} \usepackage{listlbls} \usepackage{datetime}}{}
		
\usepackage[pagebackref,colorlinks,citecolor=red,urlcolor=blue,linkcolor=green, bookmarks=false,hypertexnames=true]{hyperref}
\usepackage{orcidlink}
\usepackage{fancyhdr}

\begin{document}

\bibliographystyle{plain}

\setcounter{page}{1}

\thispagestyle{empty}

\keywords{ symplectic group, involutions}
\subjclass{15A15, 15F10}

\author{Klaus Nielsen}\,\orcidlink{0009-0002-7676-2944}
\email{klaus@nielsen-kiel.de}

\ifdraft{\today \ \currenttime}{\date{November 15, 2024}}
\pagestyle{fancy}
\fancyhf{}
\fancyhead[OC]{Klaus Nielsen}
\fancyhead[EC]{Products of symplectic involutions}
\fancyhead[OR]{\thepage}
\fancyhead[EL]{\thepage}

\maketitle

\begin{abstract}

We show that the symplectic groups $\SpG(2n, K)$ over a field $K$ are 5-reflectional  if $2n \ge 4$. If $n$ is even and $(2n, K) \ne (4, \GF(3))$ they are even 4-reflectional. 
\end{abstract}


\section{Introduction}

This is a continuation of our paper \cite{KNielsen-2025a}.
So we assume familiarity with the definitions and results of that paper.
We always assume that $K$ denotes a field of characteristic different from two.

We prove the following 2 theorems announced in \cite{KNielsen-2025a}.

\begin{theorem} \label{theorem-1}
	$\SpG(2n, K)$ is 4 reflectional if $n$ is even and $(n, |K|) \ne (2, 3)$.
\end{theorem}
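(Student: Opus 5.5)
The plan is to show that every $\pi \in \SpG(2n,K)$ with $n$ even can be written as $\pi = \sigma\tau$, where $\sigma$ and $\tau$ are each a product of two involutions. Equivalently, each of $\sigma,\tau$ is conjugate to its inverse by an involution, that is, each is a \emph{bireflectional} element of $\SpG(2n,K)$. From \cite{KNielsen-2025a} I will use the following: the description of bireflectional elements of the symplectic group (an element is a product of two symplectic involutions iff it is real via a symplectic involution, and this can be read off from its orthogonal decomposition into indecomposable symplectic modules); and the results there on $3$-reflectionality for special classes of elements.

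First I reduce to the case where $\pi$ is indecomposable, or at least has no large orthogonal fixed or negated summand. If $V = U \perp W$ is $\pi$-invariant with both summands of even dimension divisible by $4$, the claim follows by induction, since the factors combine orthogonally. The remaining cases are those in which the orthogonal decomposition forces blocks of dimension $\equiv 2 \pmod 4$. I pair such blocks so that each piece has dimension divisible by $4$.

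For the core case, I use a hyperbolic splitting $V = X \oplus X'$ into two Lagrangians, each of dimension $n$ with $n$ even. The aim is to choose an involution-product $\sigma$ such that $\sigma^{-1}\pi$ is cyclic with a prescribed characteristic polynomial. I would pick $\sigma$ so that $\rho = \sigma^{-1}\pi$ is a regular element whose characteristic polynomial is a product $f(x) f^*(x)$, with $f$ of degree $n$ and $f^*$ its reciprocal, both chosen freely. Such an element stabilizes a pair of complementary Lagrangians and is conjugate to $\mathrm{diag}(A, A^{-T})$. It is then bireflectional exactly when $A$ is conjugate in $\GL(n,K)$ to $A^{-1}$ via an involution-compatible form. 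Choosing $f$ self-reciprocal of even degree $n$ (for example, a product of factors $x^2 - cx + 1$) gives an $A$ that is a product of two involutions in $\GL(n,K)$. Hence $\rho$ is a product of two symplectic involutions. Freedom in choosing $\sigma$ should come from a Sourour-type lemma: for non-scalar $\pi$, one can prescribe the characteristic polynomial of $\pi\sigma$ for $\sigma$ ranging over bireflectional elements, for instance symplectic transvection-like products or elements acting as $\mathrm{diag}(B, B^{-T})$.

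The main obstacle is exactly this prescription step. For small fields there may be no suitable self-reciprocal polynomials with distinct roots, and the scalar-like cases $\pi = \pm 1$ plus a unipotent part also resist it. The case $\pi=-1$ is itself an involution. For $\pi$ with $\pi\mp 1$ of small rank, I would instead apply the $3$-reflectionality results for such elements from \cite{KNielsen-2025a} directly. Over $\GF(3)$ the counting fails precisely in $\SpG(4,3)$, which explains the exclusion. There I expect to need the assumption $n \ge 4$ or $|K|>3$ to produce two distinct admissible factor pairs $x^2-cx+1$. If that fails, I would handle $\SpG(4,K)$ with $|K|>3$ separately by an explicit matrix computation over the conjugacy classes.
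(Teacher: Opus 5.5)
\textbf{There is a genuine gap.} Your overall strategy, writing $\pi$ as a product of two bireflectional elements, is the right one. But the step that carries all the content is missing. You never construct the factorization. The ``Sourour-type lemma'' that would prescribe the class of $\sigma^{-1}\pi$, for $\sigma$ ranging over bireflectional symplectic elements, is not an available result, and you flag it yourself as the main obstacle. As it stands the proposal is a plan, not a proof.

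Two symptoms show the plan is not aimed at the real mechanism.
\begin{enumerate}
\item Nothing you actually carry out uses that $n$ is even. If $A=T_1T_2$ with involutions $T_i\in\GL(n,K)$, then $\operatorname{diag}(A,A^+)=(T_1\oplus T_1^+)(T_2\oplus T_2^+)$ is bireflectional for every $n$. So the parity hypothesis would have to be hidden entirely in the unproved step. That hypothesis is genuinely needed: $\SpG(2,K)$ has no involutions other than $\pm\Idm_2$.
\item The self-reciprocal choice of $f$ works against you. With $f=f^*$ and $A$ cyclic, $\operatorname{diag}(A,A^+)\sim A\oplus A$ is never regular, which contradicts your description of $\rho$. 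More seriously, a block-triangular symplectic matrix with diagonal blocks $A,A^+$ is then not forced to be conjugate to $A\oplus A^+$. You therefore have no control over the factors you would produce.
\end{enumerate}

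\textbf{The missing idea is the paper's factorization.} If $P$ has no elementary divisor $(x+1)^d$ with $d$ odd, it is conjugate to a matrix with upper-left block $\Idm_n$ (Corollary~\ref{cor-1}). Then for \emph{every} $A\in\GL(n,K)$,
\[
\left(\begin{smallmatrix}\Idm_n & B\\ C & \Idm_n+CB\end{smallmatrix}\right)=\left(\begin{smallmatrix}A & 0\\ CA & A^+\end{smallmatrix}\right)\left(\begin{smallmatrix}A^{-1} & A^{-1}B\\ 0 & A'\end{smallmatrix}\right).
\]
If $\mu(A)=q$ is coprime to $q^*$, both factors are conjugate to $A\oplus A^+$ (Lemma~\ref{lemma-1}).

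Now take $A=\lambda\Idm_n$ with $\lambda^2\ne1$, which is possible since $|K|\ge5$. This element is inverted by the symplectic involution $\left(\begin{smallmatrix}0 & H\\ H^{-1} & 0\end{smallmatrix}\right)$ with $H$ invertible antisymmetric. Such an $H$ exists precisely because $n$ is even, and for odd $n$ the element $\lambda\Idm_n\oplus\lambda^{-1}\Idm_n$ is not bireflectional. This is where parity enters.

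The reduction to this case is also specific, not a generic pairing of blocks of dimension $\equiv 2 \pmod 4$:
\begin{enumerate}
\item The orthogonally indecomposable summands of type 1 are bireflectional by \cite[Theorem 1.2]{KNielsen-2025a}. By induction, at most one of them remains.
\item Replacing $P$ by $-P$ then removes the elementary divisors $(x+1)^d$ with $d$ odd.
\end{enumerate}

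\textbf{Over $\GF(3)$ your proposed remedy fails.} There the polynomials $x^2-cx+1$ give only $x^2+1$ and $(x\pm1)^2$, so there are no two distinct admissible factor pairs to choose from. The paper instead uses semisimple $A$ built from the non-self-reciprocal factors $x^2-x-1$ and $x^3+x^2-1$, each with even multiplicity. Such $A$ exist only for $n\ge4$. The leftover $4$-dimensional pieces are then handled by Lemmas~\ref{lemma-2}--\ref{lemma-4}.
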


\begin{theorem} \label{theorem-2}
 $\SpG(2n, K)$ is 5 reflectional.
\end{theorem}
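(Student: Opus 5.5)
We prove Theorem \ref{theorem-2} by induction on $n$, using the orthogonal decomposition machinery of \cite{KNielsen-2025a}. Every $\pi \in \SpG(2n,K)$ gives an orthogonal decomposition $V = V_1 \perp \dots \perp V_k$ into nondegenerate $\pi$-invariant subspaces. Each restriction $\pi|_{V_i}$ is either cyclic-like (an orthogonally indecomposable block, possibly a pair of dual cyclic blocks) or falls under one of the special shapes handled in the first paper. A product of involutions on each $V_i$ assembles to a product of involutions on $V$ with the same number of factors, since we may pad by the identity. It therefore suffices to show that each orthogonally indecomposable summand is a product of at most $5$ involutions. Shorter products are padded using $\Idm = \Idm\cdot\Idm$; more precisely, we pad with $-\Idm$ pairs or trivial factors on that summand.

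\textbf{Step 1 (even-dimensional blocks).} For summands of dimension $2m$ with $m$ even, Theorem \ref{theorem-1} (or its local version for indecomposables proved along the way) already gives $4$ involutions. The exception is $\SpG(4,\GF(3))$. For it I would treat the finitely many conjugacy classes of $\SpG(4,3)$ directly, exhibiting each class as a product of $5$ involutions. One way is to write $\pi = \sigma \rho$ with $\sigma$ an involution chosen so that $\rho$ is a product of $4$ involutions.

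\textbf{Step 2 (general reduction).} For arbitrary $\pi$, I choose an involution $\sigma$ such that $\sigma\pi$ has a nice normal form. The aim is for $\sigma\pi$ to decompose orthogonally into summands of even ``$n$'' (dimensions divisible by $4$), or into summands known from \cite{KNielsen-2025a} to be products of $4$ involutions, for instance unipotent or bireflectional-type summands. Concretely, I would use the result of the first paper that every element is conjugate to a product of a cyclic-type element and an element fixing a hyperplane-like structure. I would then choose $\sigma$ as a symplectic involution whose $(-1)$-eigenspace is a suitable nondegenerate $2$-dimensional or $2m$-dimensional subspace. This lets us adjust the characteristic polynomial so that $\sigma\pi$ splits as $W \perp W^\perp$ with $\dim W \equiv 0 \pmod 4$ and $\dim W^\perp \equiv 0 \pmod 4$. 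When $2n \equiv 2 \pmod 4$, one piece is $2$-dimensional, and there $\SpG(2,K)=\mathrm{SL}(2,K)$ is handled directly: each element is a product of few involutions in $\SpG(2)$ up to sign, using $-\Idm$. We then get $\pi = \sigma\cdot(\sigma\pi)$ with $\sigma\pi$ a product of $4$ involutions, giving $5$ in total.

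\textbf{Main obstacle.} The hard step is the existence of $\sigma$ in Step 2: finding an involution which kills cyclicity and forces an orthogonal splitting with all summands of dimension divisible by $4$ (avoiding $\SpG(4,3)$-type pieces over $\GF(3)$). I would first try taking $\sigma$ as the involution that is $-1$ on a hyperbolic plane $H$ spanned by $v, \pi v$ for a suitable vector $v$. I would then compute the minimal polynomial of $\sigma\pi$ via a companion-matrix argument, arranging by choice of $v$ that it has a prescribed factor such as $(x-1)^2$ or $(x+1)^2$. This splits off an invariant nondegenerate subspace of the desired dimension. Small fields and small $n$, especially $K=\GF(3)$, will need separate ad hoc verification.
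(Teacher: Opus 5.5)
\textbf{There is a genuine gap, and it is at the $2$-dimensional piece.} Since $\ch K\neq 2$, the only involutions in $\SpG(2,K)=\mathrm{SL}(2,K)$ are $\pm\Idm_2$. So no other element of $\SpG(2,K)$ is a product of involutions of $\SpG(2,K)$, ``up to sign'' or otherwise. Two consequences follow.

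First, your opening reduction to orthogonally indecomposable summands is false as stated. A $2$-dimensional indecomposable summand, such as a transvection or $\mathrm{diag}(\lambda,\lambda^{-1})$, cannot be treated on its own, so summands must be grouped. The paper does this with Theorem \ref{theorem-1} and the bireflectionality of type-1 indecomposables, which reduces to $n$ odd and $P$ (type 1)-free.

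Second, in Step 2 you actually need an involution $\sigma$ such that $\sigma\pi$ acts as $\pm 1$ on a nondegenerate plane $T$. This is a real obstruction. On $T$, $\pi$ (or $\pi^{-1}$) agrees with $\pm\sigma$. A symplectic involution satisfies $f(x\sigma,y)=f(x,y\sigma)$, so $f(t_1\pi,t_2)+f(t_2\pi,t_1)=0$ for all $t_1,t_2\in T$ (recall $Q_{\pi^{-1}}=-Q_\pi$). Hence $T$ must be a nondegenerate plane that is totally singular for $Q_\pi(v)=f(v\pi,v)$.

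This is exactly the route of Lemma \ref{lemma-6}, which the paper says works only over finite fields or fields of small $u$-invariant. Over $\mathbb{R}$, take a compatible complex structure $J$, for which $f(vJ,v)$ is definite. Every $\pi$ sufficiently close to $J$ has $Q_\pi$ definite and has no eigenvalue $\pm 1$, so it is (type 1)-free. For $n$ odd, Theorem \ref{theorem-1} does not apply to these $\pi$. No choice of $v$ in your construction, and no other involution, produces the splitting you want.

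\textbf{The missing idea is to split $5=3+2$ rather than $1+4$.} For $n$ odd and $P$ (type 1)-free over an infinite field, the paper proceeds as follows (Lemma \ref{lemma-7a}).
\begin{enumerate}
\item Write $P=\left(\begin{smallmatrix}A&-B^{-1}\\ B&0\end{smallmatrix}\right)$ with $B$ symmetric (Proposition \ref{proposition-2}).
\item Conjugate a unipotent orthogonally indecomposable type-1 element $Q$, which is bireflectional by the first paper, into the shape $\left(\begin{smallmatrix}0&C\\ -C^{+}&D\end{smallmatrix}\right)$ with $\disc C=\disc B$ (Lemma \ref{lemma-7}).
\item After a further conjugation by $\mathrm{diag}(X,X^{+})$, the product becomes $\left(\begin{smallmatrix}E^{+}&\ast\\ 0&E\end{smallmatrix}\right)$. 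Here $E$ is cyclic, $\det E=1$, and $E$, $E^{-1}$ are coprime.
\item Such a matrix is $3$-reflectional by Ballantine's theorem (Lemma \ref{lemma-0}).
\end{enumerate}
No isotropy of $Q_\pi$ is needed. The small cases $\SpG(6,3)$ and $\SpG(4,3)$ are handled separately in Lemmas \ref{lemma-8} and \ref{lemma-8a}. Your case-by-case plan for $\SpG(4,3)$ is in line with the latter. Without an argument like Lemma \ref{lemma-7a}, however, the general case $2n\equiv 2 \pmod 4$ remains unproved.
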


In \cite[Proposition 4.1]{Pazzis-2024b}, C. de Seguins Pazzis has shown the following
result.

\begin{theorem}   
	Let $K$  be infinite. If $\SpG(2n, K)$ is $k$-reflectional, then $\SpG(2n+4, K)$ is $k$-reflectional.
\end{theorem}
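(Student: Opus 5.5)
Given $\sigma\in\SpG(2n+4,K)$, the goal is to produce a $\sigma$-invariant orthogonal decomposition $V=U\perp W$ with $\dim U=2n$ and $\dim W=4$. On each piece we then write the restriction as a product of $k$ involutions. Once we have $\sigma|_U=\rho_1\cdots\rho_k$ and $\sigma|_W=\tau_1\cdots\tau_k$, the maps $\rho_i\perp\tau_i$ are involutions of $V$ whose product is $\sigma$. This already requires that $\SpG(4,K)$ be $k$-reflectional, which holds for the values of $k$ at issue, $k\ge 4$ with $K$ infinite, by the results of \cite{KNielsen-2025a}.

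The difficulty is that $\sigma$ need not admit such a decomposition, for instance when $\sigma$ is cyclic with irreducible characteristic polynomial of degree $2n+4$. So I would first modify $\sigma$ by a symplectic involution. The key step is to choose an involution $\iota\in\SpG(2n+4,K)$ such that $\iota\sigma$ fixes pointwise, or acts as $-1$ on, a nondegenerate $4$-dimensional subspace $W$. Then $V=W\perp W^\perp$ is $\iota\sigma$-invariant.

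A cleaner version of this idea works at the level of fewer involutions. The plan is as follows.

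\begin{enumerate}
\item Pick a nondegenerate $4$-space $W$ together with a symplectic involution $\iota$ whose $-1$-eigenspace is a suitable nondegenerate subspace, so that $\iota\sigma$ stabilizes $W^\perp$. Concretely, choose a vector $v$ and a hyperbolic pair such that $\sigma$ maps a $2$-dimensional nondegenerate space $P$ to $\sigma P$. Then look for an involution $\iota$ with $\iota\sigma|_P=\pm\mathrm{id}$, and repeat once to capture a second plane. Witt's theorem, together with the freedom coming from $K$ being infinite (to avoid finitely many degenerate choices, for example ensuring $P+\sigma P$ is nondegenerate or of the right shape), should make this possible.
\item Write $\iota\sigma=\pi|_{W^\perp}\perp \epsilon|_W$ with $\epsilon=\pm1$. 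Then $\pi\in\SpG(2n,K)$ is a product of $k$ involutions. On $W$, the map $\iota^{-1}\cdot(\ldots)$ must be absorbed so that the total count stays $k$.
\end{enumerate}

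To keep the count at $k$ rather than $k+1$, I would instead arrange $\iota$ to be supported on a space $W'\supseteq W$ of dimension $8$, or absorb $\iota$ into the first involution $\rho_1$ of the factorization of $\pi$. This absorption is possible if $\iota$ commutes with, or is compatible with, the decomposition. The most natural route is to choose the factorization of $\pi$ so that one factor is already flexible on a subspace meeting $\operatorname{supp}(\iota)$.

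The main obstacle is exactly this bookkeeping: splitting off a $4$-dimensional block without spending an extra involution. The first thing I would try is to use that the conjugacy class of $\sigma$ is determined by its rational canonical form over $K$. For a cyclic $\sigma$, I would find a conjugate of $\sigma$ of the form $\sigma=\alpha\beta$, where $\alpha$ acts on $U\perp W$ blockwise and $\beta$ is an involution. This would reduce the problem to showing that $(\text{block map})\cdot\beta$ decomposes, using the infinitude of $K$ to choose the $4$-dimensional block's characteristic polynomial freely, for example as a product of involution-friendly factors.
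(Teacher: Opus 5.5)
There is a genuine gap, and it sits exactly at the step you call the main obstacle. Your construction gives $\iota\sigma=\pi\perp\epsilon 1_W$ with $\pi\in\SpG(2n,K)$. Hence $\sigma=\iota\,(\pi\perp\epsilon 1_W)$ is a product of $1+k$ involutions, so as written you only show that $k$-reflectionality of $\SpG(2n,K)$ gives $(k+1)$-reflectionality of $\SpG(2n+4,K)$.

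The proposed repairs are not arguments. The factors $\rho_i$ of $\pi$ are supplied by the hypothesis only after $\iota$, and hence $\pi$, has been fixed, so you have no control over them. The product $\iota(\rho_1\perp\tau_1)$ is an involution only if the two factors commute, and nothing forces this. Note also that $\iota$ cannot stabilize $W^\perp$ unless $\sigma$ already does. Your last paragraph has the same defect. Writing $\sigma=\alpha\beta$ with $\alpha$ block diagonal and $\beta$ an involution costs $k+1$ involutions, unless $\alpha|_U$ is already known to be a product of $k-1$ involutions. The hypothesis does not give that for a block you have constructed yourself.

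The missing idea is to reverse the order of the choices. Apply the hypothesis first, to a $2n$-dimensional block read off from $\sigma$, and make the first involution of that factorization part of the splitting involution. The paper does not prove this statement itself (it is quoted from de Seguins Pazzis), but its proof of the analogous Theorem~\ref{theorem-3} shows the mechanism.

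1. For (type 1)-free $\sigma$, use Proposition~\ref{proposition-2} and a conjugation by $\mathrm{diag}(G,G^{+})$, which replaces $B$ by $GBG^{T}$. This puts $\sigma$ in the form $Q$ used there, with $B=B_1\oplus B_2$, $B_1\in\GL(2,K)$ and $B_2\in\GL(n,K)$.
2. The block $M=\left(\begin{smallmatrix}0&B_2\\ -B_2^{-1}&D_4\end{smallmatrix}\right)$ lies in $\SpG(2n,K)$. By hypothesis there is an involution $Y$ with $YM$ a product of $k-1$ involutions.
3. Put $S=X\boxplus Y$ with $X=\left(\begin{smallmatrix}0&H\\ H^{-1}&0\end{smallmatrix}\right)$ and $H=\left(\begin{smallmatrix}0&h\\ -h&0\end{smallmatrix}\right)$. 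Then $SQ$ is block triangular with diagonal blocks $A=-HB_1^{-1}$, $YM$ and $A^{+}$, where $\chi_A=x^2+d$ and $d=h^2/\det B_1$.
4. Since $K$ is infinite, you can choose $h$ so that $\chi_A$, $\chi_{A^+}$ and $\chi_{YM}$ are pairwise coprime. Then $SQ$ is conjugate to an orthogonal sum of $YM$ and $\mathrm{diag}(A,A^{+})$.
5. Because $A\sim -A$, the block $\mathrm{diag}(A,A^{+})$ is 3-reflectional by Lemma~\ref{lemma-13}. So $\sigma=S\cdot(SQ)$ is a product of $k$ involutions, padding with identities when $k\ge 4$.

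This route never needs all of $\SpG(4,K)$ to be $k$-reflectional. It only needs one special hyperbolic $4$-dimensional block to be 3-reflectional. Elements with type 1 elementary divisors, for which Proposition~\ref{proposition-2} is unavailable, still require a separate argument.
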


We have a similar result:

\begin{theorem}   \label{theorem-3}
	 Assume that $\SpG(6, K)$ is 4-reflectional. Then
	\begin{enumerate}
		\item $\SpG(2n, K)$ is 4-reflectional for $2n \ge 14$.
		\item If $|K| \ge 9$, then $\SpG(10, K)$ is 4-reflectional.	
	\end{enumerate}
\end{theorem}

And as a byproduct of our observations, we have 

\begin{theorem} \label{theorem-4}
	Let $P \in \SpG(2n, K)$. 
	\begin{enumerate}
		\item  $P$ is the product of 2 big transvections if and only if $P$ has no elementary divisor $(x+1)^d$ of odd degree $d$.
		\item  $P$ is the product of 3 big transvections.
	\end{enumerate}
\end{theorem}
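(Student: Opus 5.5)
Throughout I use the notion of big transvection from \cite{KNielsen-2025a}. I read it as a unipotent $t\in\SpG(2n,K)$ with $(t-1)^2=0$ and $\operatorname{rank}(t-1)=n$, so that $\ker(t-1)=\operatorname{im}(t-1)$ is a Lagrangian subspace. If the definition there is different, the plan keeps its shape but the block constructions in step two must be redone.

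The plan is to work with the orthogonal decomposition of $P$ into indecomposable summands, taken from the normal forms in \cite{KNielsen-2025a}. Recall what these summands are. Eigenvalue $\pm1$ blocks $(x\mp1)^d$ with $d$ even may occur singly; blocks with $d$ odd occur in hyperbolic pairs. The remaining summands are pairs $V_\lambda\oplus V_{\lambda^{-1}}$, and self-dual blocks $p(x)^d$ with $p$ irreducible and $p\neq x\pm1$.

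\textbf{Sufficiency in (1).} If $P=P_1\perp P_2$ and each $P_i=s_it_i$ is a product of two big transvections on its summand, then $P=(s_1\perp s_2)(t_1\perp t_2)$. This works because an orthogonal sum of big transvections is again one: the Lagrangians add up. It therefore suffices to treat each admissible indecomposable summand.
\begin{enumerate}
\item For a pair $V\oplus V^*$ with $V$, $V^*$ dual totally isotropic, and $P=A\oplus A^{-T}$, I would use a block-triangular factorization. Take $s=\begin{pmatrix}1&B\\0&1\end{pmatrix}$ and $t=\begin{pmatrix}1&0\\C&1\end{pmatrix}$ in a hyperbolic basis, with $B,C$ symmetric and invertible. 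Multiplying by a suitable Levi element should then realize any cyclic $A$ whose elementary divisors are not of the forbidden type.
\item For the self-dual cyclic blocks, I would write down the companion-type symplectic matrix. Then I would solve for a Lagrangian $L$ with $(P-1)L$ and $L$ in the required position, so that $s:=1+N$ with $\operatorname{im}N=\ker N=L$ gives a big transvection $t=s^{-1}P$.
\end{enumerate}

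\textbf{Necessity in (1).} Suppose $P=st$. Then $s^{-1}=2-s$, and $P+1=st+1$. Since $(s-1)$ and $(t-1)$ are square-zero with Lagrangian images $L_s,L_t$, we get
\begin{equation*}
P+1=2+(s-1)+(t-1)+(s-1)(t-1).
\end{equation*}
I would try to show that the form $(u,v)\mapsto \langle u,(P+1)v\rangle$ restricted to the generalized $-1$-eigenspace has a Witt-type invariant that forces every Jordan block of odd size to be absent. Concretely, a symplectic odd-size $-1$ block contributes a nondegenerate symmetric form in the Springer--Wall invariants. Conjugating by $s$ shows that $P$ is conjugate to $ts$, and that $s$ inverts $P$ up to a square-zero perturbation. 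I expect this to contradict the presence of odd $(x+1)^d$ blocks via a parity count on $\dim(L_s\cap L_t)$ inside the $-1$-part.

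\textbf{Part (2).} Here the plan is the following.
\begin{enumerate}
\item Show that for any $P$ there is a big transvection $r$ such that $rP$ has no elementary divisor $(x+1)^d$ with $d$ odd.
\item Apply (1) to $rP$, which gives $P=r^{-1}\,s\,t$. Since $r^{-1}$ is again a big transvection, this is a product of three.
\end{enumerate}
To choose $r$, I would again split off the $-1$-part $W$ of $P$. On $W^\perp$ I take $r$ to be any big transvection, chosen generically so that no new eigenvalue $-1$ appears; over small fields this needs care. On $W$ I choose $r$ to merge the odd-size blocks into even-size ones, e.g.\ combining a hyperbolic pair of $(x+1)^d$ blocks into a $(x+1)^{2d}$ or into blocks with eigenvalue different from $-1$.

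The main obstacle is the necessity direction of (1). It needs a genuine invariant rather than a construction, and the parity argument sketched above is where I expect the real work to lie. A secondary difficulty is the small-field case ($K=\GF(3)$) in choosing $r$ generically in (2); there I would fall back on explicit $4\times4$ computations.
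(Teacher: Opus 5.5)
There is a genuine gap, and it is where you expected it: the necessity half of (1) is not proved, and the route you sketch does not lead toward a proof.

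\begin{itemize}
\item The obstruction has nothing to do with the symplectic form. The same statement holds in $\GL(n,K)$, as the introduction remarks. So Witt or Springer--Wall invariants of $(u,v)\mapsto\langle u,(P+1)v\rangle$ are not the relevant tool.
\item A count of $\dim(L_s\cap L_t)$ ``inside the $-1$-part'' means nothing until you know that this part is invariant under $s$ and $t$, and you never establish that.
\item Odd blocks $(x+1)^d$ come in pairs in $\SpG$, so dimension parities alone cannot detect them. Compare $-\Idm_2$ with a single block $(x+1)^2$ in $\SpG(2,K)$: the second is a product of two big transvections, the first is not.
\end{itemize}

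The missing idea is a commutation relation. Put $a=s-1$ and $b=t-1$, so $a^2=b^2=0$. Then
\[
P+P^{-1}=(1+a)(1+b)+(1-b)(1-a)=2+ab+ba,
\]
and $a(ab+ba)=aba=(ab+ba)a$, and likewise for $b$. Hence $s$ and $t$ commute with $P+P^{-1}$. They therefore preserve $\ker(P+1)^{2k}=\ker(P+P^{-1}+2)^k$.

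On $W=\ker(P+1)^{2k+2}/\ker(P+1)^{2k}$ the induced map satisfies $(P+1)^2=0$. So $ab+ba=-4$ there, and $(ab)^2=a(-4-ab)b=-4ab$. Thus $-\frac14ab$ and $-\frac14ba$ are complementary idempotents on $W$. Since $\operatorname{rank}a,\operatorname{rank}b\le\frac12\dim W$, this forces $\operatorname{im}a=\ker a$, $\operatorname{im}b=\ker b$ and $W=\operatorname{im}a\oplus\operatorname{im}b$. A short computation then gives $\operatorname{rank}(P+1)|_W=\frac12\dim W$. So every Jordan block of $P$ on $W$ has size $2$, whereas an elementary divisor $(x+1)^{2k+1}$ contributes a block of size $1$. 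The paper states the invariance and compresses this last step into ``$PB^{-1}$ is not unipotent on the factor space''.

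For sufficiency and for (2), your ideas point the right way, but they are only asserted (``should realize'', ``I would solve''). The paper makes them immediate.

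\emph{Sufficiency.} By Corollary~\ref{cor-1}, a (type 1+)-free $P$ is conjugate to $\begin{pmatrix}\Idm_n&B\\ C&D\end{pmatrix}$. The symplectic condition forces $B$ and $C$ to be symmetric and $D=\Idm_n+CB$. So this matrix equals $\begin{pmatrix}\Idm_n&0\\ C&\Idm_n\end{pmatrix}\begin{pmatrix}\Idm_n&B\\ 0&\Idm_n\end{pmatrix}$, with no summand-by-summand analysis.

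\emph{Definition.} In this paper a big transvection is anything with minimal polynomial $(x-1)^2$; there is no rank condition. Your Lagrangian reading is what forces the unjustified ``generic'' choice on $W^\perp$ in (2), which would be delicate over $\GF(3)$.

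\emph{Part (2).} With the paper's definition you take the identity on the complement. On each hyperbolic pair of odd $(x+1)$-blocks you use
\[
\begin{pmatrix}U&0\\ 0&U^+\end{pmatrix}=\begin{pmatrix}U&-U\\ 0&U^+\end{pmatrix}\begin{pmatrix}\Idm_n&\Idm_n\\ 0&\Idm_n\end{pmatrix},
\]
whose first factor $M$ is cyclic with elementary divisor $(x+1)^{2n}$. This is exactly your merging idea made explicit. Write $P=P_0\perp P_1$ with $P_0$ (type 1+)-free and $P_1=MT$ as above. Then $P=(P_0\perp M)(1\perp T)$, where $1\perp T$ is a big transvection and (1) applies to $P_0\perp M$, giving three factors in total.
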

In the spirit of O'Meara, we say that a linear transformation $\varphi$ is
a big transvection if its  minimal polynomial is $\mu(\varphi) = (x-1)^2$.
[O'Meara \cite{Omeara-1978} calls a linear transformation a big dilation if its minimal polynomial is $(x-1)(x-\lambda)$, where $\lambda \ne 0, 1$.]

It is easy to show that a matrix $P \in \GL(n, K)$ is the product of 2 big transvections if and only if $P$ is similar to its inverse and has no elementary divisor $(x+1)^d$ of odd degree $d$. Further it is well known that a matrix with  determinant one is the product of 2 unipotent matrices, hence it is the product of 4 big transvections.

\section{Prelimanaries}

\begin{definition}
	Let $P \in \SpG(2n, K)$. We say that $P$ is
	\begin{enumerate}
		\item (type 1)-free if $P$ has no elementary divisor $(x \pm 1)^d$ off odd degree $d$
		\item (type 1+)-free if $P$ has no elementary divisor $(x + 1)^d$ off odd degree $d$
	\end{enumerate}
\end{definition}

We need the following 2 results from our paper
\cite{KNielsen-2025b}. See \cite[Lemma 2.14]{KNielsen-2025b} and \cite[Corollary 2.12]{KNielsen-2025b}.

\begin{proposition} \label{proposition-1}
	Let $P \in \SpG(2n, K)$ be orthogonally indecomposable of type 1 with minimal polynomial $\mu(P) = (x-1)^n$. Then  $P$ is conjugate to a matrix 
\[
 \left (\begin{array} {cc} \Idm_n & B  \\ C & \Idm_n +CB \end{array} \right ).
\]
\end{proposition}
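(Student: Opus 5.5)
We need to prove Proposition \ref{proposition-1}. Here $P\in\SpG(2n,K)$ is orthogonally indecomposable with minimal polynomial $(x-1)^n$. Since $P$ acts on a $2n$-dimensional space, its elementary divisors are $(x-1)^n,(x-1)^n$, with $n$ odd; this is what "type 1" means. The goal is to find a Lagrangian splitting $V=U\oplus W$ for which $P$ has the block form displayed.

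Write the form as $\left(\begin{smallmatrix} 0 & \Idm_n\\ -\Idm_n & 0\end{smallmatrix}\right)$ with respect to a basis $u_1,\dots,u_n,w_1,\dots,w_n$, where $U=\langle u_i\rangle$ and $W=\langle w_i\rangle$ are complementary Lagrangians. Write $P=\left(\begin{smallmatrix} A & B\\ C & D\end{smallmatrix}\right)$. The symplectic condition in the case $A=\Idm_n$ reads:
\begin{itemize}
\item $C^TD$ is symmetric, which becomes $C^T$ symmetric once $D=\Idm+CB$ is used;
\item $D-B^TC=\Idm$;
\item $B^TD$ is symmetric.
\end{itemize}
From these, $B$ and $C$ symmetric together with $D=\Idm+CB$ already make $P$ symplectic. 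Conversely, $A=\Idm$ forces $D=\Idm+B^TC$. So the whole task is to find a Lagrangian $U$ and a complementary Lagrangian $W$ such that the $U\to U$ component of $P$ with respect to $V=U\oplus W$ is the identity. The $D$-block and the symmetry of $B$ and $C$ then follow automatically from the symplectic relations, after replacing $B$ by $B^T$ if one uses the transposed convention.

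Concretely, $A=\Idm$ means $(P-1)U\subseteq W$. Put $N=P-1$, which is nilpotent of index $n$ with two Jordan blocks of size $n$. The plan has three steps.

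1. Use the orthogonal indecomposability of $P$ to produce a basis $e_1,\dots,e_n,f_1,\dots,f_n$ with $Ne_i=e_{i+1}$ and $Nf_i=f_{i+1}$ (setting $e_{n+1}=f_{n+1}=0$). The form should pair the two chains, with $\langle e_i,f_j\rangle$ nonzero only for $i+j\ge n+1$, and each chain should be totally isotropic. This is the standard normal form for the pair of blocks of odd size: for odd $n$ a single block cannot carry a nondegenerate alternating invariant form, so the two blocks are paired.

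2. Split each chain into "odd" and "even" positions and take
\[
U=\langle e_1,e_3,\dots,e_n,\ f_2,f_4,\dots,f_{n-1}\rangle, \qquad W=\langle e_2,\dots,e_{n-1},\ f_1,f_3,\dots,f_n\rangle .
\]
Then $N$ maps odd-position $e$'s to even-position $e$'s and even-position $f$'s to odd-position $f$'s, so $NU\subseteq W$ and $NW\subseteq U$. Equivalently, $A=\Idm$ and $D=\Idm$, which is better than needed. Note that $\dim U=(n+1)/2+(n-1)/2=n$.

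3. Show that $U$ and $W$ are Lagrangian.

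The main obstacle is step 3. The pairing $\langle e_i,f_j\rangle$ does not respect the parity split. For instance, $\langle e_1,f_{n-1}\rangle$ can be nonzero, and $\langle e_1,f_2\rangle$ vanishes only when $n$ is small. So the chains must first be normalized so that $\langle e_i,f_j\rangle=\pm\delta_{i+j,n+1}$. This is a triangular adjustment of $f_1$, replacing it by $f_1+\sum_{k\ge 2} c_kf_k$ and solving recursively for the $c_k$, as in the usual proof of the Jordan normal form for isometries.

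Once the pairing is normalized, $e_i$ pairs only with $f_{n+1-i}$, and $i$ and $n+1-i$ have the same parity because $n+1$ is even. Therefore odd $e$'s pair only with odd $f$'s, and both odd $e$'s and odd $f$'s lie in the splitting with opposite roles. Hence $U$ and $W$ are totally isotropic. If the normalization fails, the fallback is to choose $U=\ker N^{(n+1)/2}$-adapted Lagrangians by a dimension count and build $W$ as a complement inside $N^{-1}$-preimages.
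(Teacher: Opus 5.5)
\textbf{There is a genuine gap: Steps 2--3 aim at a splitting that does not exist for $n\ge 3$.} You want complementary Lagrangians $U,W$ with $NU\subseteq W$ and $NW\subseteq U$, i.e.\ $A=D=\Idm$. You observed yourself that $A=\Idm$ forces $B,C$ symmetric and $D=\Idm+CB$. So $D=\Idm$ gives $CB=0$, hence $BC=(CB)^T=0$, and then $N=P-\Idm$ satisfies $N^2=BC\oplus CB=0$. Intrinsically: $\operatorname{im}N=NU+NW$ would be totally isotropic, so $\operatorname{im}N\subseteq(\operatorname{im}N)^\perp=\ker N$. Since $N$ has nilpotency index $n\ge 3$, Step 3 is not merely unproven but false for every choice of chains.

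You can also see directly that the normalization $\langle e_i,f_j\rangle=\pm\delta_{i+j,n+1}$ is unattainable. The adjoint of $N=P-1$ is $P^{-1}-1=-N+N^2-\cdots$, not $-N$. Hence $g(i,j)=\langle e_i,f_j\rangle$ satisfies $g(i+1,j)=\sum_{k\ge1}(-1)^k g(i,j+k)$. At $i=j=1$ the normalized pairing would give $0=g(2,1)=(-1)^{n-1}g(1,n)=g(1,n)$, contradicting nondegeneracy. The $\pm\delta$ normal form belongs to nilpotent elements of the symplectic Lie algebra, such as the Cayley transform of $P$, not to $P-1$. Two smaller points: with $Ne_i=e_{i+1}$ the pairing vanishes for $i+j\ge n+2$, not for $i+j\le n$; and the ``fallback'' sentence is too vague to count as an argument.

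What you actually need is only $A=\Idm$, that is, a Lagrangian $U$ such that $NU$ is totally isotropic and $NU\cap U=0$. Then $NU$ extends to a Lagrangian complement $W$ of $U$: in $(NU)^\perp/NU$ the image of $U\cap(NU)^\perp$ is Lagrangian, and you lift a Lagrangian complement of it. Your symplectic relations then give $D=\Idm+CB$.

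Such a $U$ is easy to find. Type 1 means $V=V_1\oplus V_2$ with $V_1,V_2$ totally isotropic and $P$-invariant. So you may take $P=J\oplus J^{-T}$ on $K^n\oplus K^n$, with form $a^Tb'-b^Ta'$ and $J=\Jordan_n(1)$. Put $U=\{(a,\RevIdm_n a)\}$, and set $N_1=J-\Idm_n$, $N_2=J^{-1}-\Idm_n$.
\begin{enumerate}
\item From $\RevIdm_n J\RevIdm_n=J^T$ you get $N(a,\RevIdm_n a)=(N_1a,\RevIdm_n N_2a)$.
\item Since $\RevIdm_n f(J)$ is symmetric for every polynomial $f$, $NU$ is totally isotropic.
\item If $(N_1a,\RevIdm_nN_2a)\in U$, then $(\Idm_n+J^{-1})N_1a=0$. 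Because $\ch K\ne 2$, this gives $N_1a=0=N_2a$, so $NU\cap U=0$.
\end{enumerate}
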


\begin{proposition} \label{proposition-2}
	Let $P \in \SpG(2n, K)$. Then $P$ is conjugate to a matrix 
	\[
	P_1 := \left (\begin{array} {cc} 0 & B  \\ -B^{-1} & D \end{array} \right ),
	\]
	where $B$ is symmetric, if and only if $P$ is (type 1)-free. 
\end{proposition}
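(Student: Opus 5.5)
The plan is to first translate the matrix shape into a coordinate-free condition, then prove the two directions separately. Throughout I use the standard symplectic basis $e_1,\dots,e_n,f_1,\dots,f_n$ with Gram matrix $J=\left(\begin{array}{cc}0&\Idm_n\\-\Idm_n&0\end{array}\right)$.

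\emph{Reformulation.} A matrix $M=\left(\begin{array}{cc}A&B\\C&D\end{array}\right)$ is symplectic iff $A^TC$ and $B^TD$ are symmetric and $A^TD-C^TB=\Idm_n$. Taking $A=0$, this forces $C=-B^{-T}$ and $BD$ symmetric. So $P$ is conjugate to $P_1$ iff two things hold. First, there is a Lagrangian $U$ with $U\cap PU=0$, so that $W:=PU$ is a Lagrangian complement of $U$. Second, the bilinear form $\beta(u,v)=\langle u,Pv\rangle$ on $U$ is symmetric. Conversely, given such a $U$, choose a basis of $U$ and the dual basis of $W$. The matrix of $P$ then has the shape of $P_1$, and $B$ symmetric is exactly the symmetry of $\beta$. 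Note that the symmetrization of $\langle x,Py\rangle$ on all of $V$ is $\langle x,(P-P^{-1})y\rangle$. So the requirement is a Lagrangian $U$ with $U\cap PU=0$ on which $\langle u,(P+P^{-1})v\rangle$ vanishes, i.e. $(P+P^{-1})U\subseteq U^\perp=U$.

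\emph{Sufficiency.} Decompose $V$ into an orthogonal sum of orthogonally indecomposable $P$-invariant subspaces. The property is stable under orthogonal sums: take the direct sum of the Lagrangians $U_i$; after reordering the basis, the blocks $B_i$ and $D_i$ assemble block-diagonally. So it suffices to treat the indecomposable types that are allowed by (type 1)-freeness. These are pairs $V=X\oplus X^*$ with $X,X^*$ totally isotropic, and indecomposables with even-degree elementary divisors $(x\pm1)^{2m}$ or self-reciprocal $p(x)^m$, $p\neq x\pm1$.

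For the pair type with $P=\mathrm{diag}(Q,Q^{-T})$, I would take $U$ to be a graph Lagrangian $\{(x,Sx)\}$ with $S$ symmetric. I would then solve for $S$ so that $U\cap PU=0$ and $(P+P^{-1})U\subseteq U$, using a companion-matrix normal form. For the remaining types I would use the explicit normal forms of indecomposable symplectic matrices from \cite{KNielsen-2025a} and write down $U$ directly. For $(x\pm 1)^{2m}$ there is presumably a cyclic-vector construction: $U$ is spanned by suitable combinations of $v, (P+P^{-1})v,\dots$, adjusted so $U\cap PU=0$.

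\emph{Necessity.} This is the step I expect to be the main obstacle. Suppose $P=P_1$ and, for contradiction, $P$ has an elementary divisor $(x-\varepsilon)^d$ with $d$ odd and $\varepsilon=\pm1$. First I would compute $P_1+P_1^{-1}$ explicitly, using $P_1^{-1}=-JP_1^TJ$. It has zero upper-left block and the symmetric structure induced by $B$ and $BD$. I would then restrict to the generalized $\varepsilon$-eigenspace $V_\varepsilon$, which is nondegenerate. Let $N$ be the nilpotent part of $\varepsilon P$ restricted to $V_\varepsilon$, and consider the form $\langle x,N^{d-1}y\rangle$ on $\ker N^{d}/\ker N^{d-1}$ for the top odd $d$. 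Since $d-1$ is even, this form is symmetric and nondegenerate exactly when an odd-degree divisor $(x-\varepsilon)^d$ occurs; this is the standard invariant distinguishing type 1.

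The aim is to show that $U\cap V_\varepsilon$-type data, with $U\cap PU=0$, produce a totally isotropic subspace of half dimension for this symmetric form. The same data should also force an $\varepsilon$-twisted anisotropy, because $\varepsilon P$ maps $U$ into a complement of $U$. Together these are contradictory. If that invariant argument proves awkward, my fallback is a determinant/parity argument. The matrices $P_1-\varepsilon\Idm$ and $\left(\begin{array}{cc}-\varepsilon\Idm&B\\-B^{-1}&D-\varepsilon\Idm\end{array}\right)$ reduce via the Schur complement to $D-\varepsilon\Idm-\varepsilon B^{-1}\cdot(\cdots)$, which is symmetric after multiplication by $B$. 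From there I would compare ranks of powers to show that all Jordan blocks at $\pm1$ come in a form incompatible with odd size.
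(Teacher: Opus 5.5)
\paragraph{There are gaps in both directions.} Your reformulation is correct and is the right one: $P$ is conjugate in $\SpG(2n,K)$ to some $P_1$ if and only if there is a Lagrangian $U$ with $U\cap PU=0$ and $SU\subseteq U$, where $S:=P+P^{-1}$. The paper does not prove this proposition; it quotes \cite[Corollary 2.12]{KNielsen-2025b}. Beyond the reformulation, however, neither implication is established, and in each direction the key idea is missing.

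\textbf{Sufficiency.} You never produce $U$ for any indecomposable type (``solve for $S$'', ``write down $U$ directly'', ``presumably a cyclic-vector construction''). The missing observation is that every orthogonally indecomposable summand allowed by (type 1)-freeness is $P$-cyclic. This includes the pair type $X\oplus X^*$, because $p^m$ and $(p^*)^m$ are coprime. For a cyclic vector $v$, the space $U:=K[S]v$ works without any adjustment:
\begin{itemize}
\item $S$ is self-adjoint for the alternating form, so for every polynomial $g$ we get $\langle v,g(S)v\rangle=\langle g(S)v,v\rangle=-\langle v,g(S)v\rangle=0$. Hence $U$ is totally isotropic, and it is $S$-invariant.
\item $P^2=SP-\Idm$ gives $K[P]=K[S]+K[S]P$, so $V=K[P]v=U+PU$. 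This forces $\dim U=n$ and $U\cap PU=0$.
\end{itemize}
Your orthogonal-sum step then finishes this direction.

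\textbf{Necessity.} The sketch does not amount to an argument. The two properties of the symmetric form $\langle x,N^{d-1}y\rangle$ that are supposed to clash are asserted, not derived, and the ``$\varepsilon$-twisted anisotropy'' is never defined. The Schur-complement fallback only yields $\dim\ker(P_1-\varepsilon\Idm)=\dim\ker(D-2\varepsilon\Idm)$, which carries no parity information. Ranks of higher powers of $P_1-\varepsilon\Idm$ do not reduce to Schur complements in this way.

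Within your own framework there is a short proof. Put $T:=P-P^{-1}$, so that $2P=S+T$ and $T^2=S^2-4\Idm$.
\begin{itemize}
\item For $u\in U$ we have $Su\in U$, so $Pu\in U$ iff $Tu\in U$. Hence $U\cap PU=0$ says that $T$ is injective on $U$ and $U\cap TU=0$.
\item Therefore $V=U\oplus TU$ with $T^2U\subseteq U$, i.e.\ $V\cong K[x]\otimes_{K[x^2]}U$ as a $K[T]$-module. Consequently every Jordan block of $T$ for the eigenvalue $0$ has even size.
\item The generalized $0$-eigenspace of $T$ is $V_1\oplus V_{-1}$. On $V_{\pm1}$ the operator $T=(P\mp\Idm)(P\pm\Idm)P^{-1}$ is $P\mp\Idm$ times a commuting unit, so all Jordan blocks of $P$ at $\pm1$ are even.
\end{itemize}

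Alternatively, you can use tools already in the paper. The conjugation in the proof of Proposition \ref{proposition-3} turns $P_1$ into $\left(\begin{array}{cc}\Idm_n&0\\ C&\Idm_n\end{array}\right)\left(\begin{array}{cc}\Idm_n&B\\ 0&\Idm_n\end{array}\right)$ with $C=(D-2\Idm_n)B^{-1}$. This is a product of two matrices $A$ with $(A-\Idm)^2=0$, so the converse argument of Theorem \ref{theorem-4}(1) excludes elementary divisors $(x+1)^d$ with $d$ odd. Applying the same argument to $-P_1$, which has the same shape, excludes $(x-1)^d$ with $d$ odd.
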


\begin{definition}
	Let
	\[
	A := \left (\begin{array} {cc} A_1 & A_2  \\ A_3 & A_4 \end{array} \right ),
	B := \left (\begin{array} {cc} B_1 & B_2  \\ B_3 & B_4 \end{array} \right ).
	\]
	The diagonal sum $A \boxplus B$ of the matrices $A$ and $B$ is the matrix
	\[
	\left (\begin{array} {cc} A_1 \oplus B_1 & A_2 \oplus B_2 \\ A_3 \oplus B_3 & A_4 \oplus B_4 \end{array} \right ).
	\]
\end{definition}

\begin{remark}
Let $P \in \SpG(2m, K), Q \in \SpG(2n, K)$. Then $P \boxplus Q \in \SpG(2m + 2n, K)$, 
and $P \boxplus Q$ is similar to $P \oplus Q$.
\end{remark}

\begin{proposition} \label{proposition-3}
Let $P \in \SpG(2n, K)$  be (type 1)-free. Then $P$ is conjugate to a matrix 
\[
P_1 := \left (\begin{array} {cc} \Idm_n & B  \\ C & D \end{array} \right ).
\]
\end{proposition}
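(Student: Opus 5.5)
The plan is to start from the normal form of Proposition~\ref{proposition-2} and then apply one further symplectic conjugation by a block unipotent matrix that turns the zero upper-left block into $\Idm_n$.

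First, since $P$ is (type 1)-free, Proposition~\ref{proposition-2} lets us replace $P$ by a conjugate
\[
P_0 = \left(\begin{array}{cc} 0 & B_0 \\ -B_0^{-1} & D_0 \end{array}\right),
\]
where $B_0$ is symmetric and invertible. Since conjugacy is transitive, it suffices to show that $P_0$ is conjugate in $\SpG(2n,K)$ to a matrix of the required shape.

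Next we conjugate by
\[
U = \left(\begin{array}{cc} \Idm_n & S \\ 0 & \Idm_n \end{array}\right), \qquad U^{-1} = \left(\begin{array}{cc} \Idm_n & -S \\ 0 & \Idm_n \end{array}\right),
\]
where $S$ is symmetric. For the standard alternating form, such a matrix is symplectic exactly when $S$ is symmetric. A direct block computation gives
\[
U P_0 = \left(\begin{array}{cc} -SB_0^{-1} & B_0 + SD_0 \\ -B_0^{-1} & D_0 \end{array}\right).
\]
Multiplying on the right by $U^{-1}$ does not change the first block column. Hence the upper-left block of $U P_0 U^{-1}$ is $-SB_0^{-1}$ and the lower-left block is $-B_0^{-1}$.

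Now choose $S := -B_0$. This choice is symmetric because $B_0$ is, so $U \in \SpG(2n,K)$. With it, the upper-left block becomes $B_0B_0^{-1} = \Idm_n$. Therefore $P_1 := U P_0 U^{-1}$ has the form
\[
\left(\begin{array}{cc} \Idm_n & B \\ C & D \end{array}\right),
\]
with $C = -B_0^{-1}$, and $B$, $D$ given by the remaining blocks of the product. We do not need explicit formulas for $B$ and $D$.

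There is no real obstacle in this argument. The only point to check is that the conjugating matrix is symplectic, which comes down to the symmetry of $B_0$. That symmetry is exactly what Proposition~\ref{proposition-2} provides.
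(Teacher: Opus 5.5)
Your proof is correct and is essentially the paper's argument: you start from the normal form of Proposition~\ref{proposition-2} and conjugate by a block-unipotent symplectic matrix built from the symmetric block $B$. The only cosmetic difference is the choice of conjugator. The paper uses the lower unitriangular matrix $\left(\begin{array}{cc} \Idm_n & 0 \\ B^{-1} & \Idm_n \end{array}\right)$, which keeps $B$ as the upper-right block and gives $\left(\begin{array}{cc} \Idm_n & B \\ (D-2\Idm_n)B^{-1} & D-\Idm_n \end{array}\right)$, whereas your upper unitriangular choice $S=-B_0$ keeps $-B_0^{-1}$ as the lower-left block and gives $\left(\begin{array}{cc} \Idm_n & B_0(2\Idm_n-D_0) \\ -B_0^{-1} & D_0-\Idm_n \end{array}\right)$.
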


\begin{proof}
	 Using  \ref{proposition-2}, we see that $P$ is conjugate to a matrix
	\[
	\left (\begin{array} {cc} \Idm_n & 0  \\ -B^{-1} &  \Idm_n\end{array} \right )
	 \left (\begin{array} {cc} 0 & B  \\ -B^{-1} & D \end{array} \right )
	\left (\begin{array} {cc} \Idm_n & 0  \\ B^{-1} &  \Idm_n\end{array} \right )
	= \left (\begin{array} {cc} \Idm_n & B  \\ (D-2 \Idm_n)B^{-1} &  D -\Idm_n\end{array} \right ).
	\]
\end{proof}

\begin{corollary} \label{cor-1}
	Let $P \in \SpG(2n, K)$. If  $P$ is (type 1+)-free, then $P$ is conjugate to a matrix 
	\[
	P_1 := \left (\begin{array} {cc} \Idm_n & B  \\ C & D \end{array} \right ).
	\]
\end{corollary}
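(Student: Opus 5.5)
We reduce to an orthogonal decomposition and glue the pieces together with the diagonal sum $\boxplus$. Since $P$ is (type 1+)-free, its only possible elementary divisors of odd degree attached to $\pm 1$ are of the form $(x-1)^d$ with $d$ odd. The first step is to split the underlying symplectic space orthogonally into $P$-invariant nondegenerate subspaces. One summand $V_0$ carries the part of $P$ that is (type 1)-free. The others $V_1,\dots,V_r$ are orthogonally indecomposable of type 1 for the eigenvalue $1$. On each $V_i$ with $i\ge 1$, the restriction $P_i$ has minimal polynomial $(x-1)^{m_i}$, where $2m_i = \dim V_i$. This is the standard orthogonal decomposition of a symplectic transformation into orthogonally indecomposable pieces used in \cite{KNielsen-2025a}. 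A type 1 indecomposable for eigenvalue $1$ has a single Jordan block $(x-1)^{2m_i}$ of even size. It therefore has minimal polynomial $(x-1)^{2m_i}$ on a space of dimension $2m_i$.

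Next we treat each summand separately.
\begin{enumerate}
\item On $V_0$ we apply Proposition \ref{proposition-3}, which gives a conjugate of $P_0$ of the form $\left(\begin{array}{cc}\Idm_{n_0} & B_0\\ C_0 & D_0\end{array}\right)$.
\item On each $V_i$ with $i\ge 1$ we apply Proposition \ref{proposition-1}. It gives a conjugate $\left(\begin{array}{cc}\Idm_{m_i} & B_i\\ C_i & \Idm_{m_i}+C_iB_i\end{array}\right)$, whose upper left block is again the identity.
\end{enumerate}

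Now take a symplectic basis of the whole space adapted to the decomposition, ordering the first halves of all the summands' symplectic bases before all the second halves. In this basis, $P$ is conjugate to the diagonal sum
\[
P_0' \boxplus P_1' \boxplus \cdots \boxplus P_r',
\]
where the $P_i'$ are the matrices obtained in the previous step. By the Remark, this diagonal sum is symplectic, and the conjugation is by a symplectic matrix. The upper left block of the diagonal sum is $\Idm_{n_0}\oplus \Idm_{m_1}\oplus\cdots\oplus\Idm_{m_r} = \Idm_n$. This gives the required form.

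The main obstacle is a mismatch in the hypothesis of Proposition \ref{proposition-1}. As quoted, it asks for minimal polynomial $(x-1)^n$ in $\SpG(2n,K)$, whereas I argued that the indecomposable type 1 blocks have minimal polynomial $(x-1)^{2m}$. To resolve this, I would first check the exact indexing in \cite[Lemma 2.14]{KNielsen-2025b}. If the lemma really covers all type 1 indecomposables for the eigenvalue $1$ (odd-size Jordan blocks), the argument goes through as above. If it does not, I would use a fallback. Any unipotent block $P_i$ with $P_i - \Idm$ nilpotent can be written in a symplectic basis $e,f$ for which the $e$-part is chosen to be a Lagrangian subspace $L$ with $L \cap (P_i-\Idm)L$ small. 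One could then try to produce the $\Idm$ upper left block directly by a suitable transvection-type conjugation, analogous to the computation in the proof of Proposition \ref{proposition-3}.
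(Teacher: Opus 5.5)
Your main argument is the intended one. The paper states the corollary without proof, and it follows exactly as you say: split off the (type 1)-free part and treat it with Proposition \ref{proposition-3}, treat the remaining type 1 summands (all for the eigenvalue $1$, by (type 1+)-freeness) with Proposition \ref{proposition-1}, and glue with $\boxplus$ via the Remark. Your bookkeeping with the adapted symplectic basis and the resulting upper left block $\Idm_n$ is also correct.

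The ``main obstacle'' paragraph, however, rests on a false claim and should be deleted. An orthogonally indecomposable summand of type 1 for the eigenvalue $1$ does \emph{not} consist of a single Jordan block $(x-1)^{2m}$. It consists of two Jordan blocks $(x-1)^m$ of the same \emph{odd} size $m$ on a $2m$-dimensional space. Equivalently, it is conjugate to $\left(\begin{array}{cc} U & 0\\ 0 & U^{+}\end{array}\right)$ with $U=\Jordan_m(1)$. Compare the proof of Theorem \ref{theorem-4}(2), and Lemma \ref{lemma-7}, where $M$ is shown to be similar to $U\oplus U$. This is precisely why (type 1)-freeness is defined through elementary divisors of \emph{odd} degree.

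So your first description, $\mu(P_i)=(x-1)^{m_i}$ with $2m_i=\dim V_i$, was right. It is literally the hypothesis of Proposition \ref{proposition-1}, so there is no indexing mismatch. A single even-size block $(x\pm 1)^{2m}$ is an orthogonally indecomposable of a different type. It is (type 1)-free and already lies in your $V_0$, where Proposition \ref{proposition-3} handles it.

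The fallback is therefore unnecessary, and as sketched it would not amount to a proof anyway. With that paragraph removed, what remains is a complete proof.
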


We prove theorem \ref{theorem-4}:

\begin{proof}
	1: If $P$ is (type 1+)-free, then $P$ is similar to
	\[
	\left (\begin{array} {cc} \Idm_n & B  \\ C & D \end{array} \right )
	\left (\begin{array} {cc} \Idm_n & 0  \\ C &  \Idm_n\end{array} \right )
	\left (\begin{array} {cc} \Idm_n & B  \\ 0 &  \Idm_n\end{array} \right ).
	\]
	Conversely,  let $P =AB$ is the product of 2 big transvections $A$ and $B$.
	Then $AP -P^{-1}A = 2P -2\Idm_{2n} = BP -P^{-1}B$. Hence $A$ and $B$ commute with $P+P^{-1}$. It follows that the spaces $\ker (P+\Idm_{2n})^{2s}$ are $A$- and $B$-invariant. Suppose that $P$ has an elementary divisor $(x+1)^{2t+1}$. Then
	$PB^{-1}$ is not unipotent on the factor space $\ker (P+\Idm_{2n})^{2t+2}/\ker (P+\Idm_{2n})^{2t}$.
	
	2: Let $P$ be orthogonally indecomposable of type 1 with mininimal polynomial $(x+1)^n$. Let $U$ be the upper Jordan matrix with mininimal polynomial $(x+1)^n$. Then $P$  is  conjugate to a  cyclic matrix and a big transvection: 
	\[
	\left (\begin{array} {cc} U & 0  \\ 0 &  U^+\end{array} \right )
	= \left (\begin{array} {cc} U & -U  \\ 0 &  U^+\end{array} \right )
	 \left (\begin{array} {cc} \Idm_n & \Idm_n  \\ 0 &  \Idm_n \end{array} \right ). 
	\]
	Hence $P$ is the product of 3 big transvections.
\end{proof}

\section{Proof of theorem \ref{theorem-1}}

\begin{definition}
	Let $P \in \SpG(2n, K)$. We say that $P$ is strictly hyperbolic if 
	 $\mu(P) = qq^*$, where $q$ and and its reciprocal $q^*$ are coprime.
\end{definition}

Obviously, strictly hyperbolic transformations are hyperbolic.

\begin{lemma}                                    \label{lemma-0}
	Let 
	\[
	P = \left (\begin{array} {cc} A & 0  \\ 0 & A^+ \end{array} \right ) \in \GL(n, K),
	\]
	where $A \in \GL(n, K)$ is cyclic or bicyclic, and $\det A = \pm 1$. Then $P$ is 3-reflectional.
\end{lemma}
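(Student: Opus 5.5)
The plan is to write $P$ as a product of three involutions that all preserve the block-diagonal shape, reducing everything to statements about $A$ in $\GL(n,K)$.

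First, the structural observation. For any $T \in \GL(n,K)$ the matrix $\operatorname{diag}(T, T^+)$ lies in $\SpG(2n,K)$. The map $T \mapsto \operatorname{diag}(T,T^+)$ is a group homomorphism, since $(TT')^+ = T^+T'^+$. Hence if $T$ is an involution in $\GL(n,K)$, then $\operatorname{diag}(T,T^+)$ is an involution in $\SpG(2n,K)$. It therefore suffices to write
\[
A = S\,T_1 T_2
\]
with $S, T_1, T_2$ involutions in $\GL(n,K)$. Then
\[
P = \operatorname{diag}(S,S^+)\,\operatorname{diag}(T_1,T_1^+)\,\operatorname{diag}(T_2,T_2^+)
\]
is a product of three symplectic involutions.

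Second, the reduction in $\GL(n,K)$. By the classical theorem of Wonenburger and Djokovi\'c, a matrix in $\GL(n,K)$ is a product of two involutions if and only if it is similar to its inverse. So the task is to find an involution $S$ such that $Y := S^{-1}A = SA$ is similar to $Y^{-1}$. For this I will use Sourour's factorization theorem. If $A \in \GL(n,K)$ is not scalar, and $\alpha_1,\dots,\alpha_n$, $\beta_1,\dots,\beta_n \in K^*$ satisfy $\prod \alpha_i\beta_i = \det A$, then $A = XY$ with $X$ diagonalizable with eigenvalues $\alpha_i$ and $Y$ having eigenvalues $\beta_i$. I apply this with the following choices:
\begin{enumerate}
\item all $\beta_i = 1$;
\item the $\alpha_i \in \{1,-1\}$ with $\prod \alpha_i = \det A$, which is possible precisely because $\det A = \pm 1$.
\end{enumerate}
Then $X$ is diagonalizable with eigenvalues $\pm 1$, hence an involution. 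Moreover $Y$ is unipotent, and every unipotent matrix is similar to its inverse, since its Jordan blocks $J_k(1)$ and $J_k(1)^{-1}$ are similar. So $Y$ is a product of two involutions $T_1T_2$, and we set $S = X$.

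Third, the degenerate cases. Sourour's theorem requires $A$ non-scalar. A cyclic $A$ is non-scalar once $n \ge 2$, and for $n=1$ we have $A = \pm 1$, so $P$ is itself an involution. A bicyclic $A$ can be scalar only if $n = 2$ and $A = \lambda \Idm_2$. Then $\det A = \lambda^2 = \pm 1$. If $\lambda = \pm 1$, then $P$ is an involution. If $\lambda^2 = -1$, I handle the case directly: write $A = \lambda \Idm_2$ as a product $\operatorname{diag}(1,-1)$ times a matrix with eigenvalues $\lambda, -\lambda$. That second matrix is similar to $\operatorname{diag}(\lambda, -\lambda) = \operatorname{diag}(\lambda, \lambda^{-1})$, hence similar to its inverse, and again a product of two involutions.

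The main obstacle is making sure the factorization theorem really delivers $X$ diagonalizable; an $X$ with eigenvalues $\pm1$ that is not diagonalizable would not be an involution. Sourour's theorem does guarantee this; if I preferred to avoid citing it, I would first try to prove it directly for $A$ in rational canonical form. For cyclic $A$ it suffices to treat the companion matrix $C(f)$. I would then exhibit an explicit involution $S$, namely a diagonal sign matrix composed with a reversal, such that $SC(f)$ is a companion-like matrix with characteristic polynomial $(x-1)^n$. For bicyclic $A$, I would apply the same construction blockwise to the two companion blocks, adjusting the signs so that the determinants match.
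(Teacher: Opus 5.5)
Your reduction to $\GL(n,K)$ via the homomorphism $T \mapsto \operatorname{diag}(T,T^+)$ is the same as the paper's. At that point the paper simply cites Ballantine's Theorem 2: a cyclic or bicyclic $A$ with $\det A=\pm1$ is a product of three involutions in $\GL(n,K)$. Your substitute for Ballantine has a genuine gap.

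\textbf{The Sourour step fails.} You invoke a version of Sourour's theorem in which $X$ is \emph{diagonalizable} with prescribed, repeated eigenvalues $\pm1$ and $Y$ has all eigenvalues $1$. Sourour does not provide this, and it cannot be true. Your argument uses nothing about $A$ beyond being non-scalar with $\det A=\pm1$, so it would make every such matrix a product of three involutions. Take $A=\operatorname{diag}(2,2,2,2,\tfrac{1}{16})$ over $\mathbb{Q}$:
\begin{enumerate}
\item Let $S$ be any involution with eigenspaces $E_\pm$ of dimensions $p+q=5$, and let $V_2$ be the $4$-dimensional $2$-eigenspace of $A$. On $V_2$ we have $SA=2S$.
\item So $SA$ has eigenvalue $2$ on $V_2\cap E_+$ (dimension $\ge p-1$) and eigenvalue $-2$ on $V_2\cap E_-$ (dimension $\ge q-1$).
\item Hence at least three eigenvalues of $SA$ lie in $\{\pm 2\}$. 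Then $SA$ is not unipotent, and not even similar to its inverse, since that would force three further eigenvalues in $\{\pm\tfrac12\}$ and $6>5$.
\end{enumerate}
Sourour's theorem gives factors with prescribed eigenvalues, i.e.\ triangularizable with prescribed diagonals. It does not give a diagonalizable first factor when the prescribed eigenvalues repeat. That is exactly where the cyclic/bicyclic hypothesis has to be used, and your main argument never uses it.

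\textbf{The fallback only covers the cyclic case.} That case does work. Multiplying a companion matrix $C(f)$ by a suitable reflection changes only its last column and yields any companion matrix $C(g)$ with $g(0)=-f(0)$. Take $g=(x-1)^{n-1}(x+\det A)$; then $C(g)$ is similar to its inverse. With a reflection you cannot insist on $(x-1)^n$ when $\det A=1$, but any self-reciprocal $g$ suffices.

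The bicyclic case cannot be done blockwise, because the two companion blocks need not individually have determinant $\pm1$. For example, $\operatorname{diag}(2,2,\tfrac14)$ is bicyclic with invariant factors $x-2$ and $(x-2)(x-\tfrac14)$, whose companion blocks have determinants $2$ and $\tfrac12$. Here one must pair eigenvalues of one modified block with reciprocal eigenvalues of the other, e.g.\ reaching the spectrum $\{2,\tfrac12,-1\}$. Doing this in general, including over small fields, is the content of Ballantine's theorem. You should either cite it, as the paper does, or actually carry out that pairing construction.
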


\begin{proof}
By \cite[THEOREM 2]{Ballantine-1977}, $A$ is 3-reflectional, hence $P$ is 3-reflectional.
\end{proof}

\begin{definition}
	Let 
	\[
	P = \left (\begin{array} {cc} A & B  \\ C & D \end{array} \right ) \in \GL(n, K),
	\]
	where $A \in \GL(m, K)$. The matrix $[P/A] : D - CA^{-1}B$ is called the Schur complement of $A$ in $P$.
\end{definition}

\begin{remark}
	Let 
	\[
	P = \left (\begin{array} {cc} A & B  \\ C & D \end{array} \right ) \in \SpG(2n, K),
	\]
	where $A \in \GL(n, K)$ is nonsingular. Then $[P/A] = A^+$.
\end{remark}

\begin{lemma}                                    \label{lemma-1}
	Let $\Omega$ be a strictly hyperbolic conjugacy class of $\SpG(2n, K)$. Then $\Omega^2$ contains all (type 1+)-free transformations of $\SpG(2n, K)$.
\end{lemma}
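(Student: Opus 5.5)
We take a (type 1+)-free $P \in \SpG(2n,K)$ and, using Corollary \ref{cor-1}, conjugate it to the form
\[
P_1 = \left (\begin{array} {cc} \Idm_n & B  \\ C & D \end{array} \right ).
\]
The aim is to write $P_1 = XY$ with $X, Y \in \Omega$. The strictly hyperbolic class $\Omega$ has a representative of the form $\operatorname{diag}(A, A^+)$. Here $A\in\GL(n,K)$ has minimal polynomial $q$, and $q$ is coprime to $q^*$. More generally, every block matrix
\[
\left (\begin{array} {cc} A & 0  \\ S & A^+ \end{array} \right ) \quad\text{or}\quad \left (\begin{array} {cc} A & T  \\ 0 & A^+ \end{array} \right )
\]
that lies in $\SpG(2n,K)$ (so $A^{-1}T$, respectively $S A^{-1}$, satisfies the relevant symmetry condition) has characteristic polynomial a product of the characteristic polynomials of $A$ and $A^+$. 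Since $q$ and $q^*$ are coprime, the $q$-primary and $q^*$-primary parts split off. Each such block triangular matrix is therefore similar, indeed conjugate in $\SpG(2n,K)$, to $\operatorname{diag}(A,A^+)$, and so lies in $\Omega$. Showing this rigorously, including the symplectic conjugacy, is the first step. It amounts to solving a Sylvester equation $AZ - ZA^+ = T$. That equation has a unique solution by coprimality, and the solution is symmetric when $T$ is compatible.

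Next we look for a factorization
\[
P_1 = \left (\begin{array} {cc} A & T  \\ 0 & A^+ \end{array} \right ) \left (\begin{array} {cc} A^{-1} & 0  \\ S & A^{+,-1} \end{array} \right ),
\]
but with the second factor also required to be conjugate to $\operatorname{diag}(A,A^+)$. Since $A^{-1}$ has minimal polynomial $q^*$ up to scalar, this is also fine: the second factor is conjugate to $\operatorname{diag}(A^{-1},A^{+})$, which is again similar to $\operatorname{diag}(A,A^+)$ via the swap $J$. So both factors lie in $\Omega$ provided the block triangular lemma applies to them. Multiplying out gives
\[
\left (\begin{array} {cc} \Idm_n + TS & T A^{+,-1} \\ A^+ S & \Idm_n \end{array} \right ).
\]
The identity block sits in the wrong corner compared with $P_1$. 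So we instead use Corollary \ref{cor-1} with the conjugation by $J$, or equivalently apply the corollary to $P^{-1}$ or to $P^{T}$; both remain (type 1+)-free. This puts the target in the form $\left(\begin{smallmatrix} D' & B' \\ C' & \Idm_n\end{smallmatrix}\right)$. We then match blocks: $S = A^{+,-1} C'$ and $T = B' A^{+}$. The equation $D' = \Idm_n + TS = \Idm_n + B'C'$ is exactly the condition forced by $P_1$ being symplectic with that corner equal to the identity. Indeed, the Schur complement remark gives $D' - B'C' = \Idm_n$, which is the same condition after transposing.

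The remaining check, which I expect to be the main obstacle, is that $S$ and $T$ satisfy the symmetry conditions needed for the two factors to be symplectic. These conditions should follow from $B'$ and $C'$ being the off-diagonal blocks of a symplectic matrix whose corner is the identity. Concretely, $B'$ and $C'$ then satisfy symmetry relations such as $C'^{T}$ being related to $C'$ through $D'$. It may be necessary to choose $A$ within its conjugacy class, for example $A$ replaced by a suitable conjugate $G A G^{-1}$ with $P_1$ conjugated by $\operatorname{diag}(G, G^{+})$, so that these relations line up with $A^{-1}T$ and $SA^{-1}$ being symmetric. If a direct match fails, I would fall back on the freedom in Proposition \ref{proposition-2}. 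There $B$ is symmetric, and one can attempt the factorization with $P$ in that form, placing $A$ so that its Sylvester equation absorbs $D$.
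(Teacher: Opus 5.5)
Your proposal is correct and is essentially the paper's proof. You put $P$ into a block form with an identity corner, factor it as a block-triangular symplectic matrix with diagonal blocks $A, A^+$ times an oppositely block-triangular one with diagonal blocks $A^{-1}, (A^{-1})^+$, and use the coprimality of $q$ and $q^*$ to see that both factors lie in $\Omega$; your Sylvester-equation argument with its symmetric solution is what justifies that last step. (Your second factor is conjugate to $A^{-1}\oplus (A^{-1})^+$, not $A^{-1}\oplus A^+$ as you wrote.)

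The paper takes the factors in the order lower times upper, which fits $\left(\begin{smallmatrix} \Idm_n & B \\ C & D \end{smallmatrix}\right)$ directly and makes your conjugation by $J$ unnecessary. The symmetry check you flagged as the main obstacle is immediate: symplecticity with an identity corner forces both off-diagonal blocks to be symmetric (and $D=\Idm_n+CB$). So no re-choice of $A$ and no fallback are needed.
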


\begin{proof} 
	Let $\mu(\Omega) = q q^*$, where $q$ and its reciprocal $q^*$ are coprimary.
	Let $P \in \SpG(2n, K)$. Assume that $P$ has no elementary divisor $(x + 1)^{2t+1}$ of odd degree. By proposition \ref{proposition-3}, $P$ is conjugate to a matrix 
	\[
	P_1 := \left (\begin{array} {cc} \Idm_n & B  \\ C & D \end{array} \right )
	= \left (\begin{array} {cc} A & 0  \\ CA^{-1} & A^+ \end{array} \right )
	\left (\begin{array} {cc} A^{-1} & A^{-1}B  \\ 0 & A^+ \end{array} \right ).
	\]
	Now $P \in \Omega^2$ for a suitable matrix $A$ with $\mu(A) = q$.
\end{proof}

 \begin{corollary}
 	Let $K \ne \GF(3)$ and $n$ be even. Then $\SpG(2n, K)$ is 4 reflectional.
\end{corollary}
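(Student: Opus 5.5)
We choose a strictly hyperbolic conjugacy class $\Omega$ of $\SpG(2n,K)$ that consists of products of two involutions, and apply Lemma \ref{lemma-1}. Then every (type 1+)-free element is a product of four involutions. The remaining elements are handled by adjusting with $-\Idm_{2n}$, which is itself an involution.

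\textbf{Construction of $\Omega$.} Since $n$ is even and $|K|>3$, we can pick $\lambda\in K^*$ with $\lambda^2\neq 1$; then $\lambda,\lambda^{-1}$ are distinct and neither equals $\pm1$. Let
\[
A=\lambda\Idm_{n/2}\oplus\lambda^{-1}\Idm_{n/2}.
\]
Then $\det A=1$, $A$ is similar to $A^{-1}$, and $\mu(A)=(x-\lambda)(x-\lambda^{-1})=:q$. The matrix $P_0=A\oplus A^+$ (here $A^+=A^{-T}$) lies in $\SpG(2n,K)$ and has $\mu(P_0)=q$. This is not strictly hyperbolic, because $q=q^*$. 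So instead we take
\[
A=\operatorname{diag}(\lambda,\dots,\lambda)\in\GL(n,K), \qquad q=x-\lambda, \qquad q^*=x-\lambda^{-1},
\]
which are coprime. Then $P_0=\lambda\Idm_n\oplus\lambda^{-1}\Idm_n$ is strictly hyperbolic. The point is that $P_0$ is a product of two involutions in $\SpG(2n,K)$ regardless of the parity of $A$. Indeed, $P_0$ is conjugate in $\SpG$ to the orthogonal sum of $n$ copies of $\operatorname{diag}(\lambda,\lambda^{-1})\in\SpG(2,K)$. In $\SpG(2,K)=\mathrm{SL}(2,K)$, however, the only involution is $-\Idm$. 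So $2\times2$ blocks do not suffice, and we must pair them up: this is where $n$ even enters. We group the $n$ hyperbolic planes into $n/2$ four-dimensional pieces. Each piece is $\operatorname{diag}(\lambda,\lambda)\oplus\operatorname{diag}(\lambda^{-1},\lambda^{-1})$ with $A'=\lambda\Idm_2$. Writing $A'=ST$ with $S^2=T^2=\Idm$ would require $\det A'=\lambda^2=\pm1$, which is false. So we instead use a mixed piece
\[
A'=\operatorname{diag}(\lambda,\lambda^{-1}),
\]
and accept $\mu(P_0)=(x-\lambda)(x-\lambda^{-1})$. To keep strict hyperbolicity we choose $\lambda$ with $\lambda^2\ne\pm1$ and use the block
\[
A'=\begin{pmatrix}0&1\\1&0\end{pmatrix}\begin{pmatrix}0&\lambda\\ \lambda^{-1}&0\end{pmatrix}=\operatorname{diag}(\lambda^{-1},\lambda),
\]
a product of two involutions in $\GL(2,K)$, then pass to $A'\oplus A'^+$, which is a product of two symplectic involutions. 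Lemma \ref{lemma-0} gives a template for this kind of passage. This is the main obstacle. The condition $|K|\ne3$ is exactly what guarantees a suitable $\lambda$, and when $\lambda^2=-1$ we use a cyclic $A$ with $\mu(A)=x^2-ax+1$, chosen so that it is coprime to its reciprocal.

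\textbf{Covering everything.} Given an arbitrary $P$, at least one of $P$ and $-P$ is (type 1+)-free, provided $P$ does not have odd Jordan blocks for both eigenvalues $1$ and $-1$. In that case Lemma \ref{lemma-1} applied to $P$ or $-P$ gives four involutions, and we absorb the sign into one of the involutions of $\Omega$, since $-\Idm$ is central. In the mixed case we split $P$ orthogonally as $P_+\perp P_-\perp P'$ according to eigenvalues. We then write each part as a product of four involutions (on the relevant subspaces, using $-P_-$) and multiply componentwise.
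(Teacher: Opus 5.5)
Your treatment of the case where $P$ or $-P$ is (type 1+)-free is the paper's argument. You take $\Omega$ to be the class of $\lambda\Idm_n\oplus\lambda^{-1}\Idm_n$ with $\lambda^2\ne1$, Lemma~\ref{lemma-1} gives $P\in\Omega^2$, and $n$ even makes the elements of $\Omega$ bireflectional. Your factorization $\mathrm{diag}(\lambda^{-1},\lambda)=ST$ in $\GL(2,K)$ is a valid way to see this, because the resulting orthogonal sum of blocks $A'\oplus (A')^{+}$ is conjugate in $\SpG(2n,K)$ to $\lambda\Idm_n\oplus\lambda^{-1}\Idm_n$.

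\textbf{The gap is the mixed case.} You split $P=P_+\perp P_-\perp P'$ by eigenvalue and rerun the argument on each piece. That only works if each piece has dimension $2m$ with $m$ even, and this is not a technicality: for $m$ odd, no strictly hyperbolic $H\in\SpG(2m,K)$ is bireflectional. Write $\mu(H)=qq^*$. An involution $\sigma$ with $\sigma H\sigma=H^{-1}$ must swap the Lagrangians $L=\ker q(H)$ and $\ker q^*(H)$. Then $(u,v)\mapsto f(u,v\sigma)$ is a nondegenerate alternating form on the $m$-dimensional space $L$, which forces $m$ even.

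An orthogonally indecomposable summand of type 1 has dimension $2d$ with $d$ odd, so in the mixed case two of $m_+,m_-,m'$ can be odd. For example, take $n=4$, let $P_+$ be a single type-1 summand with elementary divisors $(x-1)^3,(x-1)^3$, and let $P_-=-\Idm_2$. On such a piece Lemma~\ref{lemma-1} only gives a product of two non-bireflectional elements, and your proposal supplies nothing else.

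\textbf{The missing ingredient is the one the paper uses.} Every orthogonally indecomposable summand of type 1 is itself bireflectional \cite[Theorem 1.2]{KNielsen-2025a}. Suppose $P$ has two such summands, of dimensions $2d_1,2d_2$ with $d_1,d_2$ odd. Their orthogonal sum is bireflectional. Its orthogonal complement has dimension $2(n-d_1-d_2)$ with $n-d_1-d_2$ even, so induction on $n$ applies to it. Hence you may assume $P$ has at most one type-1 summand. Then $P$ or $-P$ is (type 1+)-free on the whole $2n$-dimensional space, and your first case finishes the proof.

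\textbf{The construction paragraph also needs fixing.}
\begin{itemize}
\item Your claim that $A\oplus A^+$ with $A=\lambda\Idm_{n/2}\oplus\lambda^{-1}\Idm_{n/2}$ is not strictly hyperbolic is false. Strict hyperbolicity depends only on $\mu(P)=(x-\lambda)(x-\lambda^{-1})$, and this element is conjugate to $\lambda\Idm_n\oplus\lambda^{-1}\Idm_n$.
\item The requirement $\lambda^2\ne\pm1$ is unnecessary, and it cannot be met over $\GF(5)$.
\item Your fallback cannot work, because $x^2-ax+1$ is self-reciprocal and so is never coprime to its reciprocal.
\end{itemize}
The condition $\lambda^2\ne1$ is all you need, and that is exactly what $K\ne\GF(3)$ provides.
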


\begin{proof}
	Let $P \in \SpG(2n, K)$.  If $P$ is orthogonally indecomposable of type 1, then
	$P$ is bireflectional by proposition \cite[Theorem 1.2]{KNielsen-2025a}. So by induction, we may assume that $P$  has at most one orthogonally indecomposable orthogonal summand of type 1. Clearly, $P$ is 4-reflectional if $-P$ is 4-reflectional. So we may assume that $P$ is (type 1+)-free.
	 By lemma \ref{lemma-1}, $P = H_1 H_2$ is the product of 2 semisimple strictly hyperbolic transformations $H_1, H_2$ with minimal polynomials $\mu(H_1) = \mu(H_2) = (x-\lambda)(x-\lambda^{-1})$, where $\lambda^2 \ne 1$.
	By proposition \ref{proposition-1}, $H_1$ and $H_2$ are bireflectional,  hence $P$ is 4-reflectional.
\end{proof}

 \begin{lemma}   \label{lemma-2}
 	Let $P \in \SpG(4,3)$ be orthogonally indecomposable. Then $P$ is 4-reflectional.
\end{lemma}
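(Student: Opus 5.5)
We classify orthogonally indecomposable $P \in \SpG(4,3)$ by elementary divisors and dispose of each class using the results above.

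\textbf{Type 1 classes.} If $P$ is orthogonally indecomposable of type 1, i.e. $\mu(P) = (x\pm 1)^d$ with $d$ odd, then $P$ is bireflectional by \cite[Theorem 1.2]{KNielsen-2025a}. In $\SpG(4,3)$ this covers $d=1$ on a $2$-dimensional block and $d=3$; the latter cannot occur alone in dimension $4$. So only a few odd-degree configurations survive here.

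\textbf{Remaining classes.} Since $P$ is $4$-reflectional iff $-P$ is, we may replace $P$ by $-P$ when convenient. The remaining indecomposable types over $\GF(3)$ are:
\begin{enumerate}
\item unipotent (or $-$unipotent) with a single Jordan block $(x-1)^4$, or $(x-1)^2\oplus(x-1)^2$ of type 2;
\item hyperbolic with $\mu(P)=qq^*$, $q\ne q^*$, where $q$ is linear or an irreducible quadratic;
\item elliptic with irreducible self-reciprocal $\mu(P)$ of degree $2$ or $4$, or $p^2$ with $p$ self-reciprocal of degree $2$.
\end{enumerate}
In case (i), $P$ is (type 1)-free, so Proposition~\ref{proposition-3} and the factorization in the proof of Theorem~\ref{theorem-4} write $P$ as a product of two big transvections, i.e. of two unipotent symplectic elements. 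Each of these is a product of two involutions: a big transvection in $\SpG(2n,K)$ is conjugate to a matrix with blocks $\Idm_n, S, 0, \Idm_n$ with $S$ symmetric, and conjugation by $\mathrm{diag}(\Idm_n,-\Idm_n)$ inverts it. Hence $P$ is $4$-reflectional. In case (ii), $P$ is conjugate to $A\oplus A^+$ with $A\in\GL(2,3)$ cyclic; for $\det A=\pm1$, Lemma~\ref{lemma-0} gives $3$-reflectionality. If $\det A\ne\pm1$, this is impossible over $\GF(3)$, since $\GF(3)^*=\{\pm1\}$. So case (ii) is handled.

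\textbf{Elliptic classes (main obstacle).} Case (iii) is the delicate one. Lemma~\ref{lemma-1} would require a strictly hyperbolic class $\Omega$ with $\mu(\Omega)=(x-\lambda)(x-\lambda^{-1})$ and $\lambda^2\ne1$, which does not exist in $\GF(3)$; this is exactly why $\GF(3)$ is excluded. My first attempt is to use Lemma~\ref{lemma-1} with $q$ an irreducible quadratic over $\GF(3)$ not self-reciprocal, e.g. $q=x^2+x+2$ with $q^*=x^2+2x+2$. The corresponding class $\Omega$ in $\SpG(4,3)$ is $A\oplus A^+$ with $A$ the companion matrix of $q$, and $\det A=2=-1$. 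It is therefore $3$-reflectional by Lemma~\ref{lemma-0}, but we need it to be bireflectional, and it is actually only known to be so when $A$ is similar to $A^{-1}$. That fails here, since $q\neq q^*$.

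So instead, for an elliptic $P$ (which is (type 1)-free), I would use Proposition~\ref{proposition-2}: write $P\sim\begin{pmatrix}0&B\\-B^{-1}&D\end{pmatrix}$. The first factor $\begin{pmatrix}0&B\\-B^{-1}&0\end{pmatrix}$ is a skew involution times an involution, and I would try to express $P$ as that element times $\begin{pmatrix}\Idm&B^{-1}D\\0&\Idm\end{pmatrix}$-type unipotent pieces, each bireflectional. If that fails, the finite group $\SpG(4,3)$ allows a direct check: there are only a handful of elliptic classes, and exhibiting explicit products of four involutions for each is a finite computation. I expect to fall back to explicit matrices for the classes with $\mu(P)=x^4+1$-type irreducible quartics.
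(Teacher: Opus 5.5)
\textbf{Case (i) fails.} The involution $\operatorname{diag}(\Idm_n,-\Idm_n)$ that you use to invert $\begin{pmatrix}\Idm_n&S\\0&\Idm_n\end{pmatrix}$ is not symplectic: it multiplies the form by $-1$. So it gives no factorization inside $\SpG(2n,K)$. In fact big transvections are in general not bireflectional in the symplectic group. Already in $\SpG(2,3)=\mathrm{SL}(2,3)$ the only involutions are $\pm\Idm_2$, so $\begin{pmatrix}1&1\\0&1\end{pmatrix}$ is not a product of two of them. More generally, a symplectic transvection $v\mapsto v+\lambda f(v,a)a$ is inverted by no symplectic involution.

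The route also cannot be rescued by choosing the factors better. For $P_1=\begin{pmatrix}\Idm_2&0\\C&\Idm_2\end{pmatrix}\begin{pmatrix}\Idm_2&B\\0&\Idm_2\end{pmatrix}$ one computes $\chi(P_1)=\det\bigl((x-1)^2\Idm_2-x\,CB\bigr)$. Hence $\mu(P)=(x-1)^4$ forces $CB$ to be nilpotent, so $B$ or $C$ is singular. Neither can be zero, since then $\mu(P_1)$ would divide $(x-1)^2$. So one of your two factors is a symplectic transvection, and the classes with $\mu(P)=(x\pm1)^4$ are not handled.

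\textbf{Case (iii) is unproved.} You only list strategies there. The fallback you sketch, peeling off unipotent pieces and declaring them bireflectional, has the same defect as case (i).

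Your list of indecomposables is also off. $(x-1)^2\oplus(x-1)^2$ and a semisimple $x^2+1$ on a $4$-space split orthogonally, and there are no type~1 indecomposables in dimension $4$. The genuine cases are $\mu(P)=x^4+1$, $(x^2+1)^2$, $(x\pm1)^4$ and $x^4\pm x^3+x^2\pm x+1$.

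\textbf{What the paper does instead.} For the first three cases, $P$ is conjugate to a block triangular matrix $\begin{pmatrix}A&B\\0&A^+\end{pmatrix}$. One then picks an involution $S\in\GL(2,3)$ such that $\operatorname{diag}(S,S^+)\,P=\begin{pmatrix}SA&SB\\0&(SA)^+\end{pmatrix}$ has minimal polynomial $x^2+1$ (a skew-involution) or $x^4+1$ (3-reflectional by Lemma~\ref{lemma-0}). This gives $4$-reflectionality and covers your case (i) as well. Your case (ii) is correct and agrees with the paper.

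For the irreducible quartics there is no invariant subspace at all. The paper uses Wall's result that similarity implies conjugacy here, reducing to $Q=\begin{pmatrix}0&\Idm_2\\-\Idm_2&D\end{pmatrix}$ with $D=\begin{pmatrix}0&1\\1&\pm1\end{pmatrix}$ and $D^4=-\Idm_2$. It then writes $Q=\begin{pmatrix}-D^2&0\\-D^3&D^2\end{pmatrix}\begin{pmatrix}0&D^2\\D^2&0\end{pmatrix}$, a product of two skew-involutions.
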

	
 \begin{proof}
Assume first that  $P$ has minimal polynomial $x^4+1 = (x^2 + x -1)(x^2 - x -1)$ or $(x^2 +1)^2$ or $(x \pm 1)^4$. Then $P$ is conjugate to a matrix
\[
\left (\begin{array} {cc} A & B\\ 0 & A^+ \end{array} \right ).
\]
In any case, there exists an involution $S \in \GL(2,3)$ such that
\[
\left (\begin{array} {cc} S & 0\\ 0 & S^+ \end{array} \right )
\left (\begin{array} {cc} A & B\\ 0 & A^+ \end{array} \right ).
\]
has minimal polynomial $x^2+1$ or $(x^2 + x -1)(x^2 - x -1)$. 
So $P$ is 4- reflectional.   If $\mu(P) = x^4+1$, then $P$ is even 3-reflectional.

It remains to consider the case $\mu(P) = x^4 \pm x^3 + x^2 \pm x +1$.
By Wall \cite[p. 36]{Wall}, every
symplectic transformation similar to $P$ is already conjugate to $P$.
Hence $P$ is conjugate to a matrix $Q$, where  
\[
Q = \left (\begin{array} {cc} 0 & \Idm_2\\ -\Idm_2 & D \end{array} \right ),
D = \left (\begin{array} {cc} 0 & 1\\ 1 & \pm 1 \end{array} \right ).
\]
We have $\mu(D) = x^2 \pm x-1$ and $D^4 = -\Idm_2$. Now $Q$ is a product of 2 skew-involutions.
(Actually, this can easily be shown without using Wall's theorem):
\[
Q = \left (\begin{array} {cc} 0 & \Idm_2\\ -\Idm_2 & D \end{array} \right )
= \left (\begin{array} {cc} -D^2 & 0\\ -D^3 & D^2 \end{array} \right )
\left (\begin{array} {cc} 0 & D^2\\ D^2 & 0 \end{array} \right ).
\]
\end{proof}

\begin{lemma}      \label{lemma-3}
	Let $P \in \SpG(4, K)$ with minimal polynomial $(x+1)(x-1)^2$. Then $P$ is the product of an involution and a  hyperbolic big transvection. In particular, $P$ is 3-reflectional.
\end{lemma}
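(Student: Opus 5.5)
The plan is to reduce $P$ to a normal form and then write down the factorization $P = SH$ explicitly, choosing $H$ so that $S := PH^{-1}$ is forced to be an involution.

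\textbf{Normal form.} Since $\mu(P) = (x+1)(x-1)^2$ and $2n = 4$, the elementary divisors of $P$ are determined.
\begin{itemize}
\item The divisor $(x-1)^2$ appears exactly once.
\item The $(-1)$-part has dimension $2$ and is semisimple, so it gives elementary divisors $(x+1),(x+1)$. This is consistent with the fact that odd-degree divisors $(x\pm1)^d$ occur with even multiplicity.
\end{itemize}
Hence the generalized eigenspaces $V_{-1}$ and $V_1$ are nondegenerate, mutually orthogonal planes. On $V_{-1}$ we have $P = -1$, and on $V_1$ the map $P$ is a nontrivial transvection of $\SpG(2,K)$.

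Choose a symplectic basis $e_1,f_1$ of $V_{-1}$ and $e_2,f_2$ of $V_1$ with $P e_2 = e_2$ and $P f_2 = c e_2 + f_2$, where $c \neq 0$. In the ordering $(e_1,e_2,f_1,f_2)$ this gives
\[
P = \left(\begin{array}{cc} D & C \\ 0 & D \end{array}\right),\qquad D = \left(\begin{array}{cc} -1 & 0 \\ 0 & 1 \end{array}\right),\quad C = \left(\begin{array}{cc} 0 & 0 \\ 0 & c \end{array}\right).
\]

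\textbf{Choice of $H$.} Look for $H$ of the form
\[
H = \left(\begin{array}{cc} \Idm_2 & B \\ 0 & \Idm_2 \end{array}\right)
\]
with $B$ symmetric and nonsingular. Any such $H$ lies in $\SpG(4,K)$ and has $\mu(H) = (x-1)^2$. Its fixed space $\langle e_1, e_2\rangle$ is Lagrangian, so $H$ is a hyperbolic big transvection.

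Set
\[
S := PH^{-1} = \left(\begin{array}{cc} D & X \\ 0 & D \end{array}\right),\qquad X = C - DB .
\]
Then $S$ is symplectic and
\[
S^2 = \left(\begin{array}{cc} \Idm_2 & DX + XD \\ 0 & \Idm_2 \end{array}\right).
\]
Since $D$ is diagonal with entries $d_i = \pm1$, the $(i,j)$ entry of $DX + XD$ is $(d_i + d_j)x_{ij}$. For $i \neq j$ we have $d_i + d_j = 0$, so the off-diagonal entries vanish automatically. On the diagonal, $d_i + d_i = \pm 2 \neq 0$ because $\operatorname{char} K \neq 2$. So $S^2 = \Idm_4$ exactly when the diagonal of $X$ vanishes, i.e. when $DB$ and $C$ have the same diagonal.

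This forces $B_{11} = 0$ and $B_{22} = c$. Take
\[
B = \left(\begin{array}{cc} 0 & b \\ b & c \end{array}\right),\qquad b \neq 0 .
\]
Then $B$ is symmetric with $\det B = -b^2 \neq 0$. With this choice $S$ is a symplectic involution, $H$ is a hyperbolic big transvection, and $P = SH$.

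\textbf{Three reflections.} It remains to see that $H$ is bireflectional; this is the only place an outside fact enters. Cite the bireflectionality of hyperbolic unipotent elements from \cite{KNielsen-2025a}, or check directly: with $J = \operatorname{diag}(B, -B^{-1})$ one has $J^2 = \Idm_4$, $J$ symplectic, and $(JH)^2 = \Idm_4$, hence $H = J \cdot (JH)$. Therefore $P = S \cdot J \cdot (JH)$ is a product of three involutions.

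The step I expect to be the main obstacle is the first one: checking that the $(-1)$-part is semisimple and nondegenerate, and that the orthogonal splitting $V_{-1} \perp V_1$ together with the normal form of the transvection on $V_1$ holds for every such $P$. After that, everything is the short matrix computation above.
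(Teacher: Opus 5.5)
Your factorization $P=SH$ is correct and is essentially the paper's argument. The paper conjugates $P$ by a block unitriangular symplectic matrix so that the involution becomes $\mathrm{diag}(1,-1,1,-1)$, and then reads off $H=\left(\begin{smallmatrix}\Idm_2 & B\\ 0 & \Idm_2\end{smallmatrix}\right)$ with $B=\left(\begin{smallmatrix}\lambda & 2\\ 2 & 0\end{smallmatrix}\right)$. You instead keep $P$ in normal form and solve $S^2=\Idm_4$ for $B$. Your normal form, the formula $X=C-DB$, and the conditions $B_{11}=0$, $B_{22}=c$ are all right. Two of your supporting claims, however, are wrong.

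\textbf{Hyperbolicity.} A Lagrangian fixed space does not make $H$ hyperbolic. Every $H=\left(\begin{smallmatrix}\Idm_2 & B\\ 0 & \Idm_2\end{smallmatrix}\right)$ with $B$ nonsingular symmetric has fixed space $\langle e_1,e_2\rangle$. But the symmetric form $(u,w)\mapsto f(u(H-1),w)$ induces $B$ on $V/\Fix(H)$. This form is a conjugacy invariant, and it is isotropic whenever $H\neq\Idm$ is conjugate to some $\mathrm{diag}(A,A^+)$, since the invariant totally isotropic $U$ supplies an isotropic vector. So, e.g.\ over $\mathbb{R}$, the choice $B=\Idm_2$ gives a big transvection that is not hyperbolic. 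What actually makes your $H$ hyperbolic is that $B_{11}=0$ and $\det B=-b^2\neq 0$. Thus $B$ is a hyperbolic plane, and $H$ is conjugate to $\mathrm{diag}(\Jordan_2(1),\Jordan_2(1)^+)$. You need this fact before citing the bireflectionality of hyperbolic unipotent elements from \cite{KNielsen-2025a}, which is also what the paper tacitly uses for the ``in particular''.

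\textbf{The direct check.} Your ``direct check'' is false and should be dropped or replaced.
\begin{enumerate}
\item A block diagonal matrix $\mathrm{diag}(A_1,A_4)$ is symplectic only if $A_4=A_1^+$. Hence $J=\mathrm{diag}(B,-B^{-1})$ multiplies $f$ by $-1$ and is not in $\SpG(4,K)$.
\item $J^2=\mathrm{diag}(B^2,B^{-2})\neq\Idm_4$, because $(B^2)_{12}=bc\neq 0$.
\item $(JH)^2=\left(\begin{smallmatrix}B^2 & B^3-B\\ 0 & B^{-2}\end{smallmatrix}\right)\neq\Idm_4$.
\end{enumerate}
A correct direct argument is as follows. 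Take $A=\left(\begin{smallmatrix}1 & 0\\ c/b & -1\end{smallmatrix}\right)$; then $A^2=\Idm_2$ and $ABA'=-B$. So $Z=\mathrm{diag}(A,A^+)$ is a symplectic involution with $ZHZ=\left(\begin{smallmatrix}\Idm_2 & ABA'\\ 0 & \Idm_2\end{smallmatrix}\right)=H^{-1}$. Hence $ZH$ is a symplectic involution, and $P=S\cdot Z\cdot(ZH)$ is a product of three involutions.
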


\begin{proof}
	
	$P$ is conjugate to
	\[
	\left (\begin{array} {cccc} 1 & 0 & 0 & -1 \\ 0 & 1 & -1 & 0  \\ 0 & 0 & 1 & 0 \\ 0 & 0 & 0 & 1 \end{array} \right )
	\left (\begin{array} {cccc} 1 & 0 & \lambda & 0 \\ 0 & -1 & 0 & 0  \\ 0 & 0 & 1 & 0 \\ 0 & 0 & 0 & -1 \end{array} \right )
	\left (\begin{array} {cccc} 1 & 0 & 0 & 1 \\ 0 & 1 & 1 & 0  \\ 0 & 0 & 1 & 0 \\ 0 & 0 & 0 & 1 \end{array} \right )
	\]
	\[
	= \left (\begin{array} {cccc} 1 & 0 & \lambda & 2 \\ 0 & -1 & -2 & 0  \\ 0 & 0 & 1 & 0 \\ 0 & 0 & 0 & -1 \end{array} \right ) =
	\left (\begin{array} {cccc} 1 & 0 & 0 & 0 \\ 0 & -1 & 0 & 0  \\ 0 & 0 & 1 & 0 \\ 0 & 0 & 0 & -1\end{array} \right )
	\left (\begin{array} {cccc} 1 & 0 & \lambda & 2 \\ 0 & 1 & 2 & 0  \\ 0 & 0 & 1 & 0 \\ 0 & 0 & 0 & 1\end{array} \right ).
	\]
\end{proof}

\begin{lemma}      \label{lemma-4}
	Let $P \in \SpG(4, 3)$ with minimal polynomial $(x-1)(x^2+1)$. Then $P$ is the product of 2 skew-involutions. In particular, $P$ is 4-reflectional.
\end{lemma}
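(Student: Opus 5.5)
The plan is to split $P$ orthogonally into a $2$-dimensional block on which $P$ is the identity and a $2$-dimensional block on which $P$ is a skew-involution. On each block we write $P$ as a product of two skew-involutions of $\SpG(2,3)=\mathrm{SL}(2,3)$, and then add the factorizations together.

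\textbf{Step 1 (orthogonal decomposition).} Since $\mu(P)=(x-1)(x^2+1)$ with $x-1$ and $x^2+1$ coprime, we have $V=\ker(P-\Idm_4)\oplus\ker(P^2+\Idm_4)$. The two summands are orthogonal. Indeed, for $u\in\ker(P-\Idm_4)$ and $w\in\ker(P^2+\Idm_4)$ we get $(u,w)=(Pu,Pw)=(u,Pw)$, so $(u,(P-\Idm_4)w)=0$. Since $x^2+1$ is coprime to $x-1$, the map $P-\Idm_4$ is invertible on $\ker(P^2+\Idm_4)$, hence $(u,w)=0$. Both summands are therefore nondegenerate. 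The first one is nonzero because $x-1$ divides $\mu(P)$. The second one is nonzero and $P$-invariant, and $x^2+1$ is irreducible over $\GF(3)$, so its dimension is even; hence both summands are $2$-dimensional. Choosing symplectic bases, $P$ is conjugate to $\Idm_2\oplus J$ (equivalently to $\Idm_2\boxplus J$), where $J\in\SpG(2,3)$ satisfies $J^2=-\Idm_2$.

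\textbf{Step 2 (factor each block).} In $\mathrm{SL}(2,K)$ the skew-involutions are exactly the elements of trace $0$, by Cayley--Hamilton. Over $\GF(3)$ the six trace-zero elements of $\mathrm{SL}(2,3)$ together with $\pm\Idm_2$ form the quaternion group $Q_8$, and $J$ is one of its elements $\pm k$. Since $k=ij$ and $-k=ji$, we can write $J=XY$ with $X,Y$ trace-zero elements of $\mathrm{SL}(2,3)$. To check this concretely, take $X=\left(\begin{smallmatrix}0&1\\-1&0\end{smallmatrix}\right)$ after conjugating $J$ inside $\SpG(2,3)$ so that $J$ anticommutes with $X$; then $Y:=X^{-1}J$ has trace $0$. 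For the identity block, fix any skew-involution $T\in\SpG(2,3)$. Then $\Idm_2=T\cdot(-T)$, and $-T$ is again a skew-involution.

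\textbf{Step 3 (assemble).} We obtain
\[
\Idm_2\oplus J=(T\oplus X)(-T\oplus Y),
\]
and both factors are symplectic and square to $-\Idm_4$. So $P$ is a product of two skew-involutions. As in the last part of the proof of lemma \ref{lemma-2}, each skew-involution is bireflectional by \cite{KNielsen-2025a}, so $P$ is 4-reflectional.

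The only delicate point is Step 2: we need $J$, which is an arbitrary trace-zero element of $\SpG(2,3)$, to be a product of two trace-zero elements. The quaternion-group structure of the trace-zero elements of $\mathrm{SL}(2,3)$ settles this, and a direct $2\times 2$ check can replace it if needed.
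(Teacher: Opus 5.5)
Your proof is correct and complete. The paper states Lemma \ref{lemma-4} without proof, so there is no argument of the author's to compare against. Judging from the neighbouring computations (the identity closing the proof of Lemma \ref{lemma-2}, the factorization of $U\boxplus U$ in Lemma \ref{lemma-8a}, and the representative $X\boxplus\Idm_2$ used in Lemma \ref{lemma-11}), the intended proof is presumably a single explicit matrix identity.

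Your route is structural instead. You split orthogonally into two nondegenerate planes using coprimality, write $\Idm_2=T(-T)$ on the fixed plane, and use the $Q_8$ structure of the order-$4$ elements of $\SpG(2,3)$ on the other plane. This costs a few more lines than an explicit identity, but it shows exactly where $\GF(3)$ is used. You need a skew-involution anticommuting with $J$; for $J=\left(\begin{smallmatrix}0&1\\-1&0\end{smallmatrix}\right)$ this is a matrix $\left(\begin{smallmatrix}a&b\\ b&-a\end{smallmatrix}\right)$ with $a^2+b^2=-1$.

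One point in the last step is worth making explicit. Bireflectionality of skew-involutions needs the $4$-dimensional setting, since the only involutions in $\SpG(2,3)$ are $\pm\Idm_2$. So it matters that you apply it to the factors $T\oplus X$ and $-T\oplus Y$, not blockwise. If you prefer not to rely on the citation, you can verify it directly:
\begin{itemize}
\item Take $A\oplus B$ with $A,B$ skew-involutions of $\SpG(2,3)$, and choose $M\in\SpG(2,3)$ with $MBM^{-1}=-A$. This is possible because the six elements of order $4$ in $\SpG(2,3)$ form a single conjugacy class.
\item Then $\sigma=\left(\begin{smallmatrix}0&M\\ M^{-1}&0\end{smallmatrix}\right)$ is a symplectic involution with $\sigma(A\oplus B)\sigma=-(A\oplus B)=(A\oplus B)^{-1}$.
\item Hence $A\oplus B=\sigma\cdot\bigl(\sigma(A\oplus B)\bigr)$ is a product of two involutions.
\end{itemize}
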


\begin{lemma}   \label{lemma-4a}
	$\SpG(2n, 3)$ is 4 reflectional if $n$ is even and $n \ge 4$.
\end{lemma}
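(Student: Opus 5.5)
We follow the proof of the Corollary preceding Lemma \ref{lemma-2}, replacing the semisimple strictly hyperbolic class by one that still exists over $\GF(3)$. Let $P \in \SpG(2n,3)$ with $n$ even and $n \ge 4$. By \cite[Theorem 1.2]{KNielsen-2025a}, orthogonally indecomposable summands of type 1 are bireflectional. Grouping summands and replacing $P$ by $-P$ where needed, we reduce, as in that Corollary, to the case where $P$ is (type 1+)-free, possibly up to a small orthogonal summand that we treat separately. By Lemma \ref{lemma-1}, $P \in \Omega^2$ for every strictly hyperbolic class $\Omega$ of $\SpG(2n,3)$. It therefore suffices to find one strictly hyperbolic class $\Omega$ whose elements are bireflectional, since then $P$ is 4-reflectional.

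Over $\GF(3)$ the choice $\lambda$ with $\lambda^2 \ne 1$ in $K^*$ is not available, so we use a polynomial $q$ of degree $n$ with $q$ and $q^*$ coprime. Every matrix in $\Omega$ is conjugate to $\operatorname{diag}(A, A^+)$ with $\mu(A)=q$. If $A$ is conjugate to $A^{-1}$, then $\operatorname{diag}(A,A^+)$ is bireflectional: write $A = S_1S_2$ with involutions $S_i \in \GL(n,3)$. Then $\operatorname{diag}(A,A^+) = \operatorname{diag}(S_1,S_1^+)\operatorname{diag}(S_2,S_2^+)$, and each factor is an involution in $\SpG(2n,3)$.

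We need $A$ similar to $A^{-1}$ while $q$ and $q^*$ are coprime. This forces $A$ to be non-cyclic, with its elementary divisors paired as $f$ and $f^*$. We therefore take $\mu(A) = q = f f^*$ with $f \ne f^*$, $\gcd(f,f^*)=1$, and $\deg(ff^*) = n$. The strict hyperbolicity condition is $\mu(\operatorname{diag}(A,A^+)) = q q^*$ with $q$, $q^*$ coprime. That fails here, since $q^* = q$.

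The refined choice is to make $A$ itself block diagonal, $A = \operatorname{diag}(A_1, A_1^{-1})$, and to ask only that the class of $\operatorname{diag}(A,A^+)$ has the form required in the proof of Lemma \ref{lemma-1}. That proof only needs an $A$ with prescribed $\mu(A)$ that can be realized on any (type 1+)-free $P_1$. So we rerun the factorization
\[
P_1 = \left(\begin{array}{cc} A & 0 \\ CA^{-1} & A^+ \end{array}\right)\left(\begin{array}{cc} A^{-1} & A^{-1}B \\ 0 & A^+ \end{array}\right).
\]
Here $A$ is chosen cyclic with $\mu(A)=q$ and $q$ coprime to $q^*$, so each factor is conjugate to $\operatorname{diag}(A,A^+)$. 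Instead of asking each factor to be bireflectional, we split the four involutions differently. Take $n = 2m$ and choose $q = g(x)\,g(-x)$, where $g$ has degree $m$, with $q$ and $q^*$ coprime. The reversal $x \mapsto -x$ relates $A$ and $-A$, so $\operatorname{diag}(A,A^+)$ is a product of two skew-involutions, and both factors together give four skew-involutions. Then, as in Lemma \ref{lemma-2}, we pair them to get 4 involutions.

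\textbf{Main obstacle.} The crux is the existence over $\GF(3)$ of a degree-$n$ polynomial $q$ (for even $n \ge 4$) such that $q$ and $q^*$ are coprime and each factor $\operatorname{diag}(A,A^+)$ is bireflectional, or a product of two skew-involutions that pair correctly. We would first try $n=4$ explicitly, using irreducible quadratics over $\GF(3)$ such as $x^2+x-1$. These are exactly the non-self-reciprocal ones with $D^4=-\Idm_2$ exploited in Lemma \ref{lemma-2}. We would then build general even $n$ from diagonal sums $\boxplus$ of such blocks. The failure of this construction for $n=2$ is consistent with the exception $(4,\GF(3))$.
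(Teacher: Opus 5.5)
There is a genuine gap: you never produce a strictly hyperbolic class of $\SpG(2n,3)$ whose elements are bireflectional, and your reason for abandoning the direct route is mistaken.

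You only consider block-diagonal involutions $\operatorname{diag}(S,S^+)$. That forces $A\sim A^{-1}$, hence $q=q^*$. But $H=\operatorname{diag}(A,A^+)$ can also be inverted by an involution that interchanges the two totally isotropic subspaces. For an invertible antisymmetric $X\in\GL(n,K)$,
\[
\sigma=\left(\begin{array}{cc} 0 & X\\ X^{-1} & 0\end{array}\right)
\]
is a symplectic involution. Moreover $\sigma H\sigma=H^{-1}$ if and only if $AX=XA'$, i.e.\ $AX$ is antisymmetric, and then $H=(H\sigma)\sigma$ is bireflectional.

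Such an $X$ exists whenever $A\sim B\oplus B'$: for $A=B\oplus B'$ take $X=\left(\begin{array}{cc} 0 & \Idm\\ -\Idm & 0\end{array}\right)$. In other words, it suffices that the elementary divisors of $A$ come in equal pairs $f,f$, not $f,f^*$ as you suggested. This is compatible with $\mu(A)=\mu(B)$ being coprime to its reciprocal.

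This is what makes the paper's choice work. It takes semisimple factors with $\chi(A)=(x^2-x-1)^{2t}$ for $n=4t$, and $\chi(A)=(x^3+x^2-1)^2(x^2-x-1)^{2t}$ for $n=4t+6$. Here $x^2-x-1$ and $x^3+x^2-1$ are irreducible over $\GF(3)$, with reciprocals $x^2+x-1$ and $x^3-x-1$. For $n/2$ odd one needs a non-self-reciprocal irreducible factor of odd degree, and over $\GF(3)$ the linear candidates $x\pm1$ are self-reciprocal. That is why the cubic appears and why the construction fails for $n=2$. The same mechanism explains why $n$ must be even in the Corollary, where $A=\lambda\Idm_n$.

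Your substitute route does not close the gap.

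\begin{itemize}
\item Every $\operatorname{diag}(A,A^+)$ is a product of two skew-involutions, whatever $A$ is: write $A=-S_1S_2^{-1}$ with $S_1,S_2$ symmetric and use $\left(\begin{array}{cc} 0 & S_i\\ -S_i^{-1} & 0\end{array}\right)$. So the condition $q(x)=g(x)g(-x)$ adds nothing.
\item Four skew-involutions cannot simply be ``paired'' into four involutions. A product of two skew-involutions need not be bireflectional; in $\SpG(2,K)$ only $\pm\Idm_2$ are.
\item In Lemma \ref{lemma-2} the count works because each skew-involution of $\SpG(4,3)$ is itself bireflectional. Even granting that here, you would get eight involutions (six via Lemma \ref{lemma-13}(1)), not four.
\end{itemize}

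Finally, the ``small orthogonal summand treated separately'' is a real part of the proof over $\GF(3)$. Pairing the type 1 summands can leave $P=A\boxplus B\boxplus C$, with $A,B$ orthogonally indecomposable of type 1 and $C\in\SpG(4,3)$ not 4-reflectional. The paper splits $C=C_1\boxplus C_2$ by Lemma \ref{lemma-2} and regroups into $A\boxplus C_1$ and $B\boxplus C_2$. It then uses the (type 1+)-free case when $\dim A\ge 6$, and Lemmas \ref{lemma-3} and \ref{lemma-4} when $A=\pm\Idm_2$. Your proposal does not carry out this step.
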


\begin{proof}
	Let $P \in \SpG(2n, 3)$.  Assume first that $P$ is (type 1+)-free.
	 By lemma \ref{lemma-1}, $P$ is the product of 2 semisimple strictly hyperbolic transformations with chatacteristic polynomial $(x^2-x-1)^{2t} (x^2+x-1)^{2t}$ if $n =4t \equiv 0 \mod 4$ and
	 $(x^3+x^2 -1)^2 (x^3 -x  -1)^2 (x^2-x-1)^{2t} (x^2+x-1)^{2t}$ if $n =4t + 6 \equiv 2 \mod 4$. As in the proof of \ref{lemma-1}, $P$ is 4-reflectional.
	 It follows that $P$ is 4-reflectional if $-P$ is (type 1+)-free as $P$ is 4-reflectional
	 if $-P$ is 4-reflectional.
	 
	 So by induction, we may assume that $P = A \boxplus B \boxplus C$, where 
	 $A$ and $B$ are orthogonally indecomposable of type 1, and $C \in  \SpG(4n, 3)$ is not 4-reflectional. By lemma \ref{lemma-2}, $C = C_1 \boxplus C_2$, where $\dim C_1 = 2$. 
	 As just shown, $A \boxplus C_1$ is 4-reflectional if $\dim A \ge 6$. And if $A = \pm \Idm_2$, then $A \boxplus C_1$ is 4-reflectional by lemmata \ref{lemma-3} and \ref{lemma-4}. Similarly, $B \boxplus C_2$ is 4-reflectional. Hence $P$ is 4-reflectional.
\end{proof}

\section{Proof of theorem \ref{theorem-2}}

Our old proof only works for finite fields or other fields with small u-invariant, e.g. algebraicly closed fields.

Let $(V, f)$ be a nondegenerate symplectic vector space over $K$.
Let $\varphi \in \SpG(V)$.
For $v \in V$ let  $Q_{\varphi}(v) = f(v\varphi, v)$. Then $Q_{\varphi}$ is a quadratic form with polar form $f_Q$, where 
$f_Q(u,w) = f(u \varphi, w) + f(w \varphi, u)$. We have $\rad Q_{\varphi} = \Fix(\varphi^2)$.
We need the following auxilary result.

\begin{lemma}     \label{lemma-5}
	Let $\ch K \ne 2$.  Let $(V, Q)$ be a nondegenerate quadratic vector space over $K$ with Witt index $\ind  \ge 1$. If the set of all isotropic vectors of $V$ is 
	contained in the union of two hyperplanes $H_1, H_2$ of $V$, then 
	\begin{enumerate}
		\item $(V, Q)$ is a hyperbolic plane or
		\item $K = \GF(3)$, $\dim V = 3$,
		and $H_1$ and $H_2$ are hyperbolic planes.
	\end{enumerate}
\end{lemma}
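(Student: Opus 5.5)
Since $\ind \ge 1$, I will fix an isotropic vector $e$ and complete it to a hyperbolic pair $e, e'$, giving $V = \langle e, e'\rangle \perp W$ with $W$ nondegenerate. If $W = 0$ we are in case (1), so I assume $\dim W = m \ge 1$ and aim either for a contradiction or for case (2). The guiding fact is that the isotropic vectors of $V$ span $V$ and are far from lying in a hyperplane. Explicitly, for every $w \in W$ and $a \in K$,
\[
v(a,w) := a e + w - a^{-1}Q(w) e' \quad (a \ne 0), \qquad e' ,\ e
\]
are isotropic. So the isotropic cone contains, for each $w$, the curve $a \mapsto v(a,w)$ parametrised by $K^*$.

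Write $H_i = \ker \ell_i$ for linear forms $\ell_1,\ell_2$. The condition says $\ell_1(v)\ell_2(v)=0$ for every isotropic $v$. Substituting $v(a,w)$ and clearing denominators by multiplying by $a^2$, I get for each fixed $w$ a polynomial in $a$ of degree at most $4$:
\[
\bigl(a^2\ell_1(e) + a\ell_1(w) - Q(w)\ell_1(e')\bigr)\bigl(a^2\ell_2(e) + a\ell_2(w) - Q(w)\ell_2(e')\bigr),
\]
which vanishes for all $a \in K^*$. If $|K| - 1 > 4$, that is $|K| \ge 7$ (in particular for $K$ infinite), this polynomial is identically zero. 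Hence one of the two factors is the zero polynomial, so for each $w$ either $\ell_1(e)=\ell_1(w)=Q(w)\ell_1(e')=0$ or the same holds for $\ell_2$.

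Taking $w$ anisotropic (such $w$ exist in $W$ since $\ch K\ne 2$), and varying the choice of hyperbolic pair (replacing $e$ by other isotropic vectors, which span $V$), I expect to conclude that some $\ell_i$ vanishes on a spanning set. Concretely, each $\ell_i$ would have to kill $W$ up to a finite union of proper subspaces, and since a vector space over an infinite field, or a field with $|K|\ge 7$ relative to the small number of pieces here, is not a union of two proper subspaces, one $\ell_i$ kills all of $W$. By symmetry in $e, e'$ it then also kills $e, e'$, so $\ell_i=0$, contradicting that $H_i$ is a hyperplane.

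This leaves the finite fields $\GF(3)$ and $\GF(5)$, which I will treat by direct counting. A nondegenerate quadratic space of dimension $d \ge 3$ over $\GF(q)$ has roughly $q^{d-1}$ nonzero isotropic vectors, while $H_1\cup H_2$ contains at most $2q^{d-1}$ vectors. Since the isotropic vectors span $V$, they cannot all lie in one hyperplane, and a finer count of the isotropic vectors inside each hyperplane should kill all cases except $q=3$, $d=3$. In that case the conic has $q+1=4$ projective points, and two lines can cover four points in general position only by each containing two of them, that is, each $H_i$ is a hyperbolic plane, which is case (2). For $d=3$ and $q=5$ there are $6$ points of the conic, and each line meets it in at most $2$, so $2$ lines cover at most $4$ points, a contradiction. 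For $d\ge 4$ a slice-by-slice count, or induction by restricting to a nondegenerate 3-dimensional subspace containing an isotropic vector and not lying in $H_1$ or $H_2$, should do.

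The main obstacle I expect is this small-field bookkeeping, especially $q=3$ with $d=4$ or $5$, where the counts are tight and the inductive restriction must be chosen carefully. There I would first try to pick a 3-dimensional nondegenerate isotropic subspace $U$ with $U\not\subseteq H_1, H_2$ such that $U\cap H_i$ is not a hyperbolic plane, and then apply the $d=3$ case.
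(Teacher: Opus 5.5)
The paper omits its proof of this lemma (``We omit the simple proof''), so there is nothing to compare against, and I am judging your argument on its own. Your $\dim V=3$ analysis is fine. Modulo the slip in the second paragraph, you establish the statement only for $|K|\ge 7$ and for $\dim V=3$. The cases $K=\mathrm{GF}(3)$ and $K=\mathrm{GF}(5)$ with $\dim V\ge 4$ are left at ``should do'', and this is a genuine gap.

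The count you do state proves nothing. $H_1\cup H_2$ contains about $2q^{d-1}$ vectors, more than the roughly $q^{d-1}$ isotropic ones. The relevant comparison is between the isotropic points of $V$ and those lying in a single hyperplane, and there the margins are thin:
for $q=3$, $d=4$ there are $16$ points on the hyperbolic quadric against $7$ in a tangent plane, and $10$ on the elliptic one against at most $4$ in a plane;
for $d=5$ there are $40$ against at most $16$.

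Your fallback induction is not carried out either. You would have to show that a nondegenerate isotropic $3$-space $U$ with $U\not\subseteq H_1$ and $U\not\subseteq H_2$ exists. For $q=3$ you would also need one for which $U\cap H_1$ and $U\cap H_2$ are not both hyperbolic planes.

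Both cases can be closed.
For $\mathrm{GF}(5)$, your own method works if you parametrize over all of $K$ rather than $K^*$. The vector $e'+tw-t^2Q(w)e$ is isotropic for every $t\in K$, so $\ell_1\ell_2$ along it is a polynomial in $t$ of degree at most $4$ with $|K|\ge 5$ roots, hence zero. Doing the same with $e,e'$ interchanged, your argument then runs for all $|K|\ge 5$.
For $\mathrm{GF}(3)$, count projective points. Let $N_k$ be the number of isotropic points of a nondegenerate $k$-dimensional space. Projecting from $\langle e\rangle$ gives $N_d=q^{d-2}+(1+qN_{d-2})$, where $1+qN_{d-2}$ is the number of isotropic points in the tangent hyperplane $e^\perp$. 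By Witt's theorem every degenerate hyperplane $u^\perp$ ($u$ isotropic) looks like this, and every other hyperplane is a nondegenerate $(d-1)$-space. Using $N_k=\frac{q^{k-1}-1}{q-1}+\varepsilon q^{k/2-1}$ with $\varepsilon\in\{0,\pm1\}$, one checks that $N_d$ exceeds twice the number of isotropic points of any hyperplane for all $d\ge 4$ and $q\ge 3$.

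There is also a logical slip in the large-field part. From ``for each $w$ one of the two factors is the zero polynomial'' you correctly get that some $\ell_i$ vanishes on $W$. But ``by symmetry in $e,e'$'' only yields that some, possibly different, $\ell_j$ vanishes on $W$; it does not give $\ell_i(e)=\ell_i(e')=0$. Argue instead as follows.
If $\ell_1(e)\ne 0$, the second alternative holds for every $w$, so $\ell_2$ kills $e$ and $W$.
An anisotropic $w$ then forces $\ell_2(e')=0$, i.e.\ $\ell_2=0$, a contradiction; hence $\ell_1(e)=\ell_2(e)=0$.
Symmetrically $\ell_1(e')=\ell_2(e')=0$.
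Now $W=\ker(\ell_1|_W)\cup\ker(\ell_2|_W)$, which forces $\ell_1|_W=0$ or $\ell_2|_W=0$, so some $\ell_i=0$.
A vector space is never the union of two proper subspaces over any field, so no size condition is needed at that step. The remarks about varying the hyperbolic pair can be dropped.
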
  

We omit the simple proof.

\begin{lemma}  \label{lemma-6}
	Let $(V, f)$ be a nondegenerate symplectic vector space over $K$.
	Assume that $\dim V \ge 6$.
	Let $\varphi \in \SpG(V, f)$.
	Assume that  $Q_{\varphi}$ has Witt index $\ge 1$. If $\rad Q_{\varphi} = 0$ assume even that
	$Q_{\varphi}$ has Witt index $\ge 2$. Then there exists an involution $\sigma \in 
	\SpG(V, f)$ such that $\Fix(\phi \sigma)$ is not totally degenerate.
\end{lemma}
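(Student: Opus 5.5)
Throughout, read ``$\Fix(\varphi\sigma)$ is not totally degenerate'' as: the fixed space of $\varphi\sigma$ contains two vectors $v_1,v_2$ with $f(v_1,v_2)\neq 0$. I will build $\sigma$ by prescribing it on a small subspace and then extending. For $v\in V$ write $w=v\varphi$. The condition $v\varphi\sigma=v$ says $w\sigma=v$; since $\sigma$ is an involution this is the same as $\sigma$ swapping $v$ and $w$.

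\emph{Two-vector criterion.} So I look for $v_1,v_2$ with $f(v_1,v_2)\neq 0$ and a symplectic involution $\sigma$ with $v_i\sigma=w_i$ and $w_i\sigma=v_i$, where $w_i=v_i\varphi$. Set $W=\langle v_1,v_2,w_1,w_2\rangle$. For the swap to be a well-defined isometry of $W$ I need the following.
\begin{itemize}
\item $f(v_i,w_i)=f(w_i,v_i)$, i.e.\ $f(v_i,v_i\varphi)=0$. So $Q_\varphi(v_i)=0$.
\item $f(v_1,v_2)=f(w_1,w_2)$. This holds automatically since $\varphi$ is an isometry.
\item $f(v_1,w_2)=f(w_1,v_2)$. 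By skew-symmetry this is $f(v_1\varphi,v_2)+f(v_2\varphi,v_1)=0$, i.e.\ $f_Q(v_1,v_2)=0$.
\end{itemize}
Hence the key requirement is a two-dimensional totally singular subspace $\langle v_1,v_2\rangle$ of $Q_\varphi$ on which $f$ is nondegenerate. Given such a pair, the swap is an isometric involution of $W$. Its eigenspaces $W_+=\langle v_i+w_i\rangle$ and $W_-=\langle v_i-w_i\rangle$ are $f$-orthogonal. I then extend: choose a nondegenerate subspace $V_-\supseteq W_-$ with $V_-\perp W_+$, put $V_+=V_-^\perp\supseteq W_+$, and let $\sigma=1$ on $V_+$ and $\sigma=-1$ on $V_-$. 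This is the same construction as in the one-vector case, where one picks $x$ with $f(v-w,x)=1$ and $f(v+w,x)=0$. Making it work needs $W_+$ and $W_-$ to be in general enough position. Here $\dim V\ge 6$ gives room, and a few open conditions on $v_1,v_2$ should suffice; in particular I want $v_i\pm w_i\ne 0$ and $W_-\cap\rad(f|_W)$ manageable.

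\emph{Choosing the vectors.} First pick a $Q_\varphi$-isotropic $v_1$ outside $\rad Q_\varphi=\Fix(\varphi^2)$; this is available from the Witt index hypothesis. Next pick $v_2$ in $v_1^{\perp_{f_Q}}$ with $Q_\varphi(v_2)=0$, subject to two restrictions:
\begin{itemize}
\item $v_2$ avoids the hyperplane $\{x: f(v_1,x)=0\}$;
\item $v_2$ avoids a second hyperplane that encodes the degeneracy to be excluded in the extension step.
\end{itemize}
Now apply Lemma \ref{lemma-5} to the nondegenerate quadratic space $v_1^{\perp}/\langle v_1\rangle$, or to a suitable nondegenerate section of $v_1^{\perp_{f_Q}}$. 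This says the isotropic vectors are not covered by two hyperplanes unless that space is a hyperbolic plane or we are in the $\GF(3)$, $\dim 3$ exception.

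\emph{Case split.} When $\rad Q_\varphi=0$, Witt index $\ge 2$ makes the residual space have Witt index $\ge 1$. Together with $\dim V\ge 6$, this should rule out the exceptional cases or let me treat them directly. When $\rad Q_\varphi\neq 0$, I instead take $v_2$ (or $v_1$) from $\Fix(\varphi^2)$. Such vectors are automatically $f_Q$-orthogonal to everything and singular, so only the open condition $f(v_1,v_2)\ne0$ is needed. The subcase $v\varphi=\pm v$ is handled by adjusting $\sigma$ to be $\pm1$ near $v$.

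\emph{Main obstacle.} I expect the hardest step to be the extension of the swap on a possibly degenerate $W$ to a global involution, together with the small exceptional configurations coming from Lemma \ref{lemma-5}, namely hyperbolic planes and $\GF(3)$. I would first try the explicit construction of $V_-$ described above, and treat leftover cases by hand.
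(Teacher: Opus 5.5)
\textbf{A gap, though your route is the paper's.} The paper also looks for $Q_\varphi$-singular vectors $v,w$ with $f_Q(v,w)=0$ (i.e.\ $w\in v(\varphi-\varphi^{-1})^\perp$) and $f(v,w)\neq 0$. It finds them with Lemma \ref{lemma-5} in the residual space when $\rad Q_\varphi=0$; as you say, $\dim V\ge 6$ excludes the exceptions there. It then builds $\sigma$ from the vectors $v\varphi\pm v$, $w\varphi\pm w$.

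The gap is the step you yourself call the main obstacle. You never specify the ``second hyperplane'', and you never show that the swap on $W$ extends to a symplectic involution. As written, nothing guarantees that $\sigma$ exists.

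The paper fills this concretely. Take the second hyperplane to be $(v_1-v_1\varphi)^\perp$; it contains $v_1$ because $Q_\varphi(v_1)=0$, so it passes to the residual space. Using $f_Q(v_1,v_2)=0$,
$$f(v_1\varphi-v_1,\,v_2\varphi-v_2)=2f(v_1-v_1\varphi,\,v_2)\neq 0,$$
so $W_-$ is a nondegenerate plane. Since $W_+\perp W_-$, the involution that is $-1$ on $W_-$ and $+1$ on $W_-^\perp\supseteq W_+$ sends $v_i\varphi=\tfrac12(v_i\varphi+v_i)+\tfrac12(v_i\varphi-v_i)$ to $v_i$.

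When $\rad Q_\varphi\neq 0$, the paper replaces $\varphi$ by $-\varphi$ if necessary to get $0\neq x\in\Fix(\varphi)$. It takes a $Q_\varphi$-singular $y\notin x^\perp$ and notes that $W_+=\langle x,\,y\varphi+y\rangle$ is automatically nondegenerate, because $f(x,y\varphi+y)=2f(x,y)$. Then $\sigma$ is $+1$ on $W_+$ and $-1$ on $W_+^\perp\ni y\varphi-y$.

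\textbf{Your obstacle in fact disappears.} Your two-vector criterion is sufficient as well as necessary. For arbitrary $v_1,v_2$ the cross terms cancel:
\[
f(v_1\varphi-v_1,\,v_2\varphi-v_2)+f(v_1\varphi+v_1,\,v_2\varphi+v_2)=2f(v_1\varphi,v_2\varphi)+2f(v_1,v_2)=4f(v_1,v_2).
\]
So $f(v_1,v_2)\neq 0$ forces one of $W_-$, $W_+$ to be a nondegenerate plane. The involution that is $-1$ on $W_-$ (resp.\ $+1$ on $W_+$), with the opposite sign on the orthogonal complement, then works, because $W_+\perp W_-$ by your total-singularity condition.

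Consequently none of the following is needed: a second hyperplane, the conditions $v_i\pm v_i\varphi\neq 0$, separate handling of $v\varphi=\pm v$, or the $\GF(3)$ exception of Lemma \ref{lemma-5}. You only have to avoid $v_1^\perp$ inside $v_1^{\perp_{f_Q}}$. This is possible because the isotropic vectors of the isotropic nondegenerate residual space span it. One proviso: choose $v_1$ so that $v_1^\perp\cap v_1^{\perp_{f_Q}}$ is a proper hyperplane of $v_1^{\perp_{f_Q}}$, i.e.\ $v_1$ is not an eigenvector of $\varphi-\varphi^{-1}$; the paper also leaves this implicit. In the radical case, any $0\neq x\in\Fix(\varphi^2)$ together with any $Q_\varphi$-singular $y\notin x^\perp$ then works directly.
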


\begin{proof}
	Assume first that $\Fix(\varphi^2) = 0$. 
	Let $v \in V$ be $Q_{\varphi}$-isotropic.  By lemma \ref{lemma-5}, there exists a $Q_{\varphi}$-isotropic vector $w  \in v(\varphi - \varphi^{-1})^{\perp} - [v(1-\varphi)^{\perp} \cup v^{\perp}]$.

	The planes $S := \langle v \varphi -v, w\varphi -w \rangle$ and $T := \langle v, w \rangle$ are  nondegenerate, and $v \varphi + v, w \varphi + w \in S^{\perp}$. Let $\sigma \in \SpG(V, f)$ be the involution with $\Fix(\sigma) = S^{\perp}$. Then $T \le \Fix(\varphi \sigma)$. 
	
	If $\dim \Fix(\varphi^2) \ge \frac{\dim V}{2} + 1$, then already $\Fix(\varphi^2)$  is not totally degenerate.
	So let $0 \ne \Fix(\varphi^2) \le \frac{\dim V}{2}$. We may assume that
	$\Fix(\varphi) \ne 0$. Let $x \in \Fix(\varphi)$ be nonzero, and let $y \in V -x^{\perp}$ be $Q_{\varphi}$-isotropic. Then $T := \langle x,y\varphi+y\rangle$ is nondegenerate and $y - y\varphi \in T^{\perp}$.
	Let $\tau \in \SpG(V, f)$ be the involution with $\Fix(\tau) = T$. Then $\Fix(\varphi \tau)$ contains the nondegenerate plane $\langle x,y \rangle$. 
\end{proof}

\begin{corollary} 
	Let $K$ be finite. If $K = \GF(3)$ let $2n \ge 8$. Then 
	$\SpG(2n, K)$ is 5-reflectional.
\end{corollary}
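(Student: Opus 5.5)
The plan is to reduce to an element whose fixed space is not totally degenerate, split off a nondegenerate piece with an involution, and then use that the remaining $\varphi\sigma$ is a product of $4$ involutions. The key input is Lemma \ref{lemma-6}. Over a finite field every nondegenerate quadratic form of dimension $\ge 3$ is isotropic, and in dimension $\ge 5$ it has Witt index $\ge 2$. So for $\varphi\in\SpG(2n,K)$ with $2n\ge 6$, the hypothesis of Lemma \ref{lemma-6} on $Q_\varphi$ should essentially always hold.

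The argument proceeds as follows.

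\textbf{Setup.} Let $\varphi \in \SpG(2n,K)$ and argue by induction on $n$. Orthogonally decompose $\varphi$. Any summand that is orthogonally indecomposable of type 1 is bireflectional by \cite[Theorem 1.2]{KNielsen-2025a}. Small cases $2n\le 4$ are covered by Lemmas \ref{lemma-2}, \ref{lemma-3}, \ref{lemma-4} and by the known $\SpG(4,q)$ results. So we may assume $2n\ge 6$ (for $K=\GF(3)$, $2n\ge 8$).

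\textbf{Step 1: finding $\sigma$.} We check the Witt index hypothesis on $Q_\varphi$. Its radical is $\Fix(\varphi^2)$, and it induces a nondegenerate form on $V/\Fix(\varphi^2)$. If that quotient has dimension $\ge 3$, the form is isotropic over a finite field, and an isotropic vector lifts. If the quotient has dimension $\ge 5$, the Witt index is $\ge 2$. When $\Fix(\varphi^2)=0$ we have $\dim\ge 6$, so the index is at least $2$. When $\Fix(\varphi^2)\neq 0$ but the quotient is small, $\dim\Fix(\varphi^2)\ge \dim V-2 > \dim V/2$. Then $\Fix(\varphi^2)$ is not totally isotropic, and the argument in the second half of the proof of Lemma \ref{lemma-6} applies directly. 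In all cases we obtain an involution $\sigma$ with $\Fix(\varphi\sigma)$ not totally degenerate, that is, not totally isotropic. Hence $\Fix(\varphi\sigma)$ contains a nondegenerate plane $T$.

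\textbf{Step 2: reduction.} Then $\psi:=\varphi\sigma$ satisfies $\psi = 1_T \oplus \psi'$ with $\psi'\in\SpG(T^\perp)$ and $\dim T^\perp = 2n-2$. It suffices to show $\psi$ is $4$-reflectional, since then $\varphi=\psi\sigma$ is $5$-reflectional.

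\textbf{Step 3: $\psi$ is $4$-reflectional.} This is where the $1_T$ summand is used. If $n-1$ is even, then Theorem \ref{theorem-1} (via the Corollary to Lemma \ref{lemma-1} and Lemma \ref{lemma-4a}) gives that $\psi'$ is $4$-reflectional on $T^\perp$, and we extend by $1$ on $T$. The exception is $(2n-2,K)=(4,\GF(3))$, i.e.\ $\SpG(6,3)$, which is excluded by the hypothesis $2n\ge 8$ there. If $n-1$ is odd, then $n$ is even, and $\varphi$ itself is $4$-reflectional by Theorem \ref{theorem-1}; so Steps 1–3 are only needed for $n$ odd.

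The main obstacle is Step 1: controlling the Witt index of $Q_\varphi$ in the degenerate cases, and making sure the involution from Lemma \ref{lemma-6} produces a nondegenerate plane in $\Fix(\varphi\sigma)$ rather than only a non-isotropic vector. I would handle this by splitting on $\dim\Fix(\varphi^2)$ exactly as in the proof of Lemma \ref{lemma-6}.
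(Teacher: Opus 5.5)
Your proposal is correct and follows the paper's intended argument. You verify the Witt-index hypotheses of Lemma \ref{lemma-6} via isotropy of quadratic forms over finite fields, including the direct case $\dim \Fix(\varphi^2) > n$. You then split off the nondegenerate plane $T \le \Fix(\varphi\sigma)$ and, for odd $n$, apply Theorem \ref{theorem-1} on $T^\perp$ with $n-1$ even; for even $n$ you apply Theorem \ref{theorem-1} to $\varphi$ directly. The induction, the orthogonal decomposition and the appeal to Lemmas \ref{lemma-2}--\ref{lemma-4} are unnecessary, and $2n = 2$ must be tacitly excluded, since the only involution in $\SpG(2,K)$ is $-1$.
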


Next we consider infinite fields. By theorem \ref{theorem-1} and \cite[Theorem 1.2]{KNielsen-2025a}, it suffices to show that (type 1)-free transformations are 5-reflectional.

\begin{lemma}                         \label{lemma-7}
	Let $P \in \SpG(2n, K)$ be orthogonally indecomposable of type 1 with minimal polynomial $(x-1)^n$. Let $S \in \GL(n, K)$ be  a symmetric, upper antitriangular matrix with
	$\sum_{j=1}^n (-1)^j S_{j,n+1-j} = 0$.
	Let $U = \Jordan_n(1)$ be the upper Jordan matrix with eigenvalue 1.
	Then $P$ is conjugate to 
	\[
	Q := \left (\begin{array} {cc} 0  & US\\ -(US)^+ & U^+ +U^S  \end{array} \right ).
	\]
\end{lemma}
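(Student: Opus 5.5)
The plan is to show three things: that $Q$ lies in $\SpG(2n, K)$, that $Q$ is unipotent, and that $Q$ has the same elementary divisors as $P$. Since $P$ is orthogonally indecomposable of type 1 with $\mu(P) = (x-1)^n$, the integer $n$ is odd and $P$ has elementary divisors $(x-1)^n, (x-1)^n$. Because $n$ is odd, the symplectic conjugacy class of such a unipotent element is determined by its similarity class; for type 1 there is no form invariant attached to odd-size unipotent Jordan blocks. This fact is implicit in \cite{KNielsen-2025b}, and I would use it as known. So it suffices to prove that $Q$ is symplectic and $Q \sim \Jordan_n(1) \oplus \Jordan_n(1)$.

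\emph{Step 1 (symplecticity).} I would write $M := US$ and check the block identities for $Q = \left(\begin{smallmatrix} 0 & M \\ -M^+ & D\end{smallmatrix}\right)$ with $D = U^+ + U^S$. The conditions reduce to a symmetry condition on $M^{-1}D$ (or $DM^{+\,-1}$, depending on the row convention). Since $S$ is symmetric and $U^S = S^{-1}US$, this comes down to $S^{-1}U^{-1}\cdots$ being symmetric, which is a direct computation using $S = S^t$. This is the same normal form as in Proposition \ref{proposition-2}, with $B$ replaced by the (generally nonsymmetric) $M$.

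\emph{Step 2 (characteristic polynomial).} Using the Schur complement of the invertible block $xI$ in $xI - Q$, I get
\[
\det(x\Idm_{2n} - Q) = \det\bigl(x^2 \Idm_n - x D + M^+M\bigr).
\]
Substituting $D = U^+ + S^{-1}US$ and factoring should give, after conjugation by $S$, a determinant of the form $\det\bigl((x\Idm_n - U)(x\Idm_n - \cdots)\bigr)$ plus a correction term. This should show that the characteristic polynomial is $(x-1)^{2n}$. I expect the alternating-sum condition $\sum_j (-1)^j S_{j,n+1-j} = 0$ to be exactly what kills the correction term. The reason is that the antidiagonal of $S$ interacts with the superdiagonal of $U$ and $U^{-t}$ through an alternating sum.

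\emph{Step 3 (Jordan structure).} I would show $\dim\ker(Q - \Idm_{2n}) = 2$. For $(u,v)(Q - \Idm) = 0$ with row vectors, the first block forces $u = -vM^+$ (or the analogous relation). The second block then reduces to $v$ lying in the kernel of the Schur complement $D - \Idm_n - M^+M$, restricted appropriately. Combining this with the triangular/antitriangular shapes should show the kernel is $2$-dimensional. A unipotent element of $\SpG(2n,K)$ with exactly two Jordan blocks, both odd-sized and of total size $2n$, must be $\Jordan_n(1)\oplus\Jordan_n(1)$, since odd blocks occur in pairs. This conclusion could fail only if the blocks were $\Jordan_d \oplus \Jordan_{2n-d}$ with $d \ne n$, which would require even block sizes, not odd ones. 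So I would also need to rule out two even-sized blocks.

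I expect the main obstacle to be Steps 2 and 3: verifying unipotency and excluding the even-block splitting. Here the alternating-sum hypothesis on $S$ must enter. I would first try explicit small cases ($n=3$) to see which minor the hypothesis controls, and then carry out the general computation. If the direct route is messy, the fallback is to conjugate $Q$ by the lower unitriangular matrix from the proof of Proposition \ref{proposition-3}. This brings $Q$ to the shape $\left(\begin{smallmatrix}\Idm_n & B\\ C & \Idm_n + CB\end{smallmatrix}\right)$ of Proposition \ref{proposition-1}, where unipotency and the block structure can be read off from the nilpotent products $B$, $C$ built from $U - \Idm_n$.
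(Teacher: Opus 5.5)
Your framework is sound, and the paper relies on the same fact: check $Q\in\SpG(2n,K)$, determine its Jordan type, then use that a unipotent symplectic element with elementary divisors $(x-1)^n,(x-1)^n$, $n$ odd, is determined up to conjugacy by its similarity class. But there is a genuine gap. The only step that uses the hypothesis is excluding $\Jordan_{n+1}(1)\oplus\Jordan_{n-1}(1)$, i.e.\ proving $(Q-\Idm_{2n})^n=0$, and you leave it as an expectation. Your guesses about where the hypothesis enters are also wrong.

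\begin{itemize}
\item In Step 2 there is no correction term. Since $M^+M=U^+S^{-1}US=U^+U^S$ and $D=U^++U^S$, one has exactly $x^2\Idm_n-xD+M^+M=(x\Idm_n-U^+)(x\Idm_n-U^S)$. So $Q$ is unipotent for every invertible $S$.
\item In Step 3 your reduction gives $D-\Idm_n-M^+M=-(U^+-\Idm_n)(U^S-\Idm_n)$. For upper antitriangular $S$ this yields $\dim\ker(Q-\Idm_{2n})=2$ whatever the alternating sum is.
\item When the alternating sum is nonzero, $Q\sim\Jordan_{n+1}(1)\oplus\Jordan_{n-1}(1)$. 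So unipotency plus a two-dimensional fixed space can never settle the question; only the nilpotency index of $Q-\Idm_{2n}$ can.
\item Your fallback fails as well. The lower unitriangular conjugation of Proposition \ref{proposition-3} with $B=US$ produces $\left(\begin{smallmatrix}\Idm_n&US\\ \ast&D-\Idm_n\end{smallmatrix}\right)$. This has the shape of Proposition \ref{proposition-1} only if $(US)^+US=\Idm_n$, i.e.\ $US$ is symmetric. In that case Proposition \ref{proposition-2} would make $Q$ (type 1)-free. Also $US$ is invertible, not a nilpotent matrix built from $U-\Idm_n$.
\end{itemize}

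The missing idea is to undo the final conjugation in the paper's proof. One has $Q=\left(\begin{smallmatrix}\Idm_n&0\\S^{-1}&\Idm_n\end{smallmatrix}\right)M\left(\begin{smallmatrix}\Idm_n&0\\-S^{-1}&\Idm_n\end{smallmatrix}\right)$ with $M=\left(\begin{smallmatrix}U&US\\0&U^+\end{smallmatrix}\right)$. Here $M$ is symplectic because $U^{-1}(US)=S$ is symmetric, and the conjugating matrix is symplectic too; this also settles your Step 1.

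Next take an involution $T$ with $TUT=U^{-1}$; it is necessarily upper triangular with $T_{j+1,j+1}=-T_{j,j}$. Conjugating $M$ by $\Idm_n\oplus\RevIdm_nT$ and using $\RevIdm_nU^t\RevIdm_n=U$ gives $\left(\begin{smallmatrix}U&X\\0&U\end{smallmatrix}\right)$ with $X=US\RevIdm_nT$. Since $S$ is upper antitriangular, $S\RevIdm_n$ is upper triangular with diagonal entries $S_{j,n+1-j}$. Hence $X$ is upper triangular with $\operatorname{tr}X=\pm\sum_j(-1)^jS_{j,n+1-j}$.

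Put $N=U-\Idm_n$. The off-diagonal block of $\left(\begin{smallmatrix}N&X\\0&N\end{smallmatrix}\right)^n$ is $\sum_{i=0}^{n-1}N^iXN^{n-1-i}$. Its only possibly nonzero entry is the $(1,n)$ entry, which equals $\operatorname{tr}X$. So the hypothesis is precisely $(M-\Idm_{2n})^n=0$, and together with the two-dimensional kernel this gives $M\sim U\oplus U$. This is the content of the Johnson--Schreiner lemma the paper cites, and it is exactly what your Step 3 needs.
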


\begin{proof}
	Let 
	\[
	M = \left (\begin{array} {cc} U  & US\\ 0 & U^+  \end{array} \right ).
	\]
	Then $M \in  \SpG (2n, K)$. 
	There exists an  involution $T \in \GL(n, K)$  inverting $U$. Clearly, $T$ 
	must be upper triangular, and $T_{j,j}= - T_{j+1, j+1}$. We have  
	$\RevIdm_n (U') \RevIdm_n  = U$, where $\RevIdm_n$  is the antidiagonal identity matrix of $\GL(n, K)$.  Hence $M$ is similar to 
	\[
	\left (\begin{array} {cc} \Idm_n  & 0 \\ 0 &  T \RevIdm_n   \end{array} \right )
	\left (\begin{array} {cc} U  & US \\ 0 &  U^+ \end{array} \right )
	\left (\begin{array} {cc} \Idm_n  & 0 \\ 0 &  \RevIdm_n T  \end{array} \right )
	=\left (\begin{array} {cc} U  & US \RevIdm_n T\\ 0 & U  \end{array} \right ). 
	\]
	Now $S \RevIdm_n$ is upper triangular. Hence $U S \RevIdm_n T$
	is  upper triangular with zero trace.
	By \cite[Lemma 3]{Johnson_Schreiner-1991}, $M$ 
	is similar  to $U \oplus U$. Hence $P$ is conjugate to $M$.
	Now
	\[
	\left (\begin{array} {cc} \Idm_n  & 0\\ S^{-1} &  \Idm_n  \end{array} \right )
	\left (\begin{array} {cc} U  & US\\ 0 & U^+  \end{array} \right )
	\left (\begin{array} {cc} \Idm_n  & 0\\ -S^{-1} &  \Idm_n  \end{array} \right )
	= \left (\begin{array} {cc} 0  & US\\ -(US)^+ & U^+ +U^S  \end{array} \right ).
	\]
\end{proof}

\begin{lemma} \label{lemma-7a}
	Let $2n \ge 6$ and $2n \equiv 2 \mod 4$.  Let $K$ be infinite. 
	Let $P \in \SpG(2n,K)$ be (type 1)-free. 
	Then $P = Z Q$ is the product of a 3-reflectional matrix $Z$ and   an orthogonally indecomposable matrix $Q \in \SpG(2n,K)$ of type 1. 
\end{lemma}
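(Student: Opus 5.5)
The approach is to combine the normal form of Proposition \ref{proposition-2} for (type 1)-free transformations with the normal form of Lemma \ref{lemma-7} for orthogonally indecomposable transformations of type 1 with minimal polynomial $(x-1)^n$. Both forms have zero upper left block, so the factor $Z$ can be taken block upper triangular with diagonal blocks $A, A^+$. Lemma \ref{lemma-0} then makes such a $Z$ 3-reflectional, provided $A$ is cyclic (or bicyclic) with $\det A=\pm1$.

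Step 1. By Proposition \ref{proposition-2}, conjugate $P$ to
\[
P_1=\left(\begin{array}{cc}0 & B\\ -B^{-1} & D\end{array}\right), \qquad B=B' .
\]
Take $Q$ of the shape in Lemma \ref{lemma-7}, where $S$ is symmetric, upper antitriangular and satisfies the alternating antidiagonal-sum condition; by that lemma $Q$ is conjugate to the indecomposable type 1 element with minimal polynomial $(x-1)^n$. The identity $P_1=ZQ$ forces
\[
Z=P_1Q^{-1}=\left(\begin{array}{cc}A & E\\ 0 & A^+\end{array}\right), \qquad A=B(US)^{-1},
\]
and the lower left block vanishes exactly when the two lower left blocks match. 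A direct computation with $Q^{-1}$ (symplectic inverse) confirms that $Z$ is block upper triangular with $A=B S^{-1}U^{-1}$, up to checking signs; it is automatically symplectic.

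Step 2. Since $P_1$ and $Z$ are symplectic, $\det B=\pm1$ is not automatic, so normalize first. Replacing $P_1$ by its conjugate under $\mathrm{diag}(X,X^+)$ changes $B$ to $XBX'$, which lets us multiply $\det B$ by squares. The freedom in $S$ changes $\det S$ as well, and $\det U=1$. Together these should give $\det A=\pm1$. The antidiagonal entries $s_1,\dots,s_n$ of $S$ need only satisfy $\sum(-1)^js_j=0$, which is possible with all $s_j\neq0$ for $n\ge2$. Since $\det S=\pm\prod s_j$, any value of $\det S$ in $K^*$ is attainable, so $\det A=\pm1$ can be arranged directly.

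Step 3 (main obstacle). We need $A=BS^{-1}U^{-1}$ to be cyclic. Here $B$ is a fixed symmetric matrix, after conjugation $B\mapsto XBX'$, and $S$ ranges over a large family (the off-antidiagonal entries in the upper part are free). Since $K$ is infinite, cyclicity is a Zariski-open condition on the parameters (nonvanishing of a discriminant-type polynomial), so it suffices to exhibit one parameter choice over the algebraic closure, or argue that the open set is nonempty. I would try to choose $X$ so that $B$ is diagonal (possible since $\ch K\neq2$). Then $S^{-1}$, which is lower antitriangular, times $U^{-1}$ has a structure in which a companion-like pattern can be forced by choosing entries of $S$ suitably, and one checks that the polynomial defining non-cyclicity is not identically zero.

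The hypothesis $2n\equiv 2 \bmod 4$, i.e.\ $n$ odd, presumably enters here or in the determinant sign. With $n$ odd, $\det(-A)=-\det A$, so both signs are available, and Ballantine's theorem requires $\det A=\pm1$. If $A$ fails to be cyclic, bicyclic suffices by Lemma \ref{lemma-0}, which gives extra room. Finally, Lemma \ref{lemma-0} applied to the block diagonal part yields that $Z$ is 3-reflectional. Here one also has to observe that $Z$ is conjugate to $\mathrm{diag}(A,A^+)$, or else adapt Ballantine's involutions to the triangular form. The cleanest route is to choose the parameters so that $A$ and $A^+$ have coprime minimal polynomials, again a Zariski-open condition for infinite $K$, which makes $Z$ conjugate to $A\oplus A^+$.
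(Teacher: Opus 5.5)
Your plan is essentially the paper's proof. The paper also puts $P$ in the form of Proposition \ref{proposition-2} and $Q$ in the form of Lemma \ref{lemma-7}, with $C=US$ chosen so that $\disc C=\disc B$ (this is your Step 2). It then conjugates $Q$ by $\mathrm{diag}(X,X^+)$, so that the product is block triangular with diagonal blocks $E^+,E$, where $E=BXCX'$. Finally it states without further argument that $X$ can be chosen with $E$ cyclic, $\det E=1$ and $E$, $E^{-1}$ coprimary, before applying Lemma \ref{lemma-0}; so your Step 3 is left unproved there as well.

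Minor slips, none of which affects the argument:
\begin{enumerate}
\item $P_1Q^{-1}$ is block lower triangular, not upper; its upper right block vanishes automatically because both factors have zero upper left block.
\item Since $S$ is symmetric, $s_j=s_{n+1-j}$. For $n=3$ the condition therefore forces $s_2=2s_1$ and $\det S=-2s_1^3$, so only the square class of $\det S$ is free. That suffices together with $B\mapsto XBX'$.
\item $n$ odd is needed simply because an orthogonally indecomposable element of type 1 in $\SpG(2n,K)$ has minimal polynomial of odd degree $n$; it is not needed for the determinant sign.
\end{enumerate}
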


\begin{proof}
	By proposition \ref{proposition-2}, we may assume that
	\[
	P = \left (\begin{array} {cc} A & -B^{-1}\\ B & 0 \end{array} \right),
	\]
	where $B$ is symmetric. Let  $Q$ is orthogonally indecomposable of type 1. It follows from lemma \ref{lemma-7} that $Q$ is conjugate to a matrix 
	\[
	\widetilde{Q} = \left (\begin{array} {cc} 0 & C\\ -C^+ & D \end{array} \right),
	\]
	where $C$ is upper triangular and has discriminant $\disc C = \disc B$.
	There exists a cyclic matrix $E := BXCX'$  with determinant
	one such that $E$ and its inverse are coprimary.
 Hence 
	\[
	\left (\begin{array} {cc} A & -B^{-1}\\ B & 0 \end{array} \right)
	\left (\begin{array} {cc} X & 0\\ 0 & X^+ \end{array} \right)
	\left (\begin{array} {cc} 0 & C\\ -C^+ & D \end{array} \right)
	\left (\begin{array} {cc} X^{-1} & 0\\ 0 & X' \end{array} \right)
	= \left (\begin{array} {cc} E^+ & \ast\\ 0 & E \end{array} \right)
	\]
	is 3-reflectional by lemma \ref{lemma-0}.
\end{proof}

It remains to consider the case $K = \GF(3), 2n \le 6$.

\begin{lemma} \label{lemma-8}
Let $P \in \SpG(6,3)$ be (type 1)-free. 
Then $P = ZQ$, where $Z$ is cyclic with minimal polynomial $(x^3+x^2-1)(x^3-x-1)$ 
and $Q$ is orthogonally indecomposable of type 1.
\end{lemma}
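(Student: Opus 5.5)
We follow the pattern of the proof of lemma \ref{lemma-7a}, with $n = 3$ and $K = \GF(3)$. The infinite-field argument has to be replaced by an explicit choice over $\GF(3)$.

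By proposition \ref{proposition-2}, we may assume that
\[
P = \left (\begin{array} {cc} A & -B^{-1}\\ B & 0 \end{array} \right),
\]
where $B \in \GL(3,3)$ is symmetric. Up to congruence over $\GF(3)$, a symmetric $B$ is determined by its discriminant $\disc B \in \{1, -1\}$ modulo squares. By lemma \ref{lemma-7}, every orthogonally indecomposable $Q$ of type 1 with $\mu(Q) = (x-1)^3$ is conjugate to
\[
\widetilde{Q} = \left (\begin{array} {cc} 0 & C\\ -C^+ & D \end{array} \right), \qquad C = US,
\]
where $S$ is symmetric and upper antitriangular with $S_{13} - S_{22} + S_{31} = 0$, that is, $S_{22} = 2S_{13}$. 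Since $\det U = 1$, we get $\det C = \det S = -S_{13}^2 S_{22}$. Choosing $S_{13} = \pm 1$ gives $\det C = \mp 2 S_{13}^3$, which takes both values $\pm 1$. So $\disc C$ can be made equal to $\disc B$. Taking $-Q$ instead (eigenvalue $-1$) is also available if needed.

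Next, for $X \in \GL(3,3)$, the block computation in lemma \ref{lemma-7a} gives
\[
P \left (\begin{array} {cc} X & 0\\ 0 & X^+ \end{array} \right) \widetilde{Q} \left (\begin{array} {cc} X^{-1} & 0\\ 0 & X' \end{array} \right) = \left (\begin{array} {cc} E^+ & \ast\\ 0 & E \end{array} \right), \qquad E = BXCX'.
\]
We want to choose $X$ so that $E$ is cyclic with $\mu(E)$ equal to $x^3 - x - 1$ or $x^3 + x^2 - 1$. These are reciprocals of each other up to sign, both irreducible over $\GF(3)$, and coprime. Then $E^+$ has the reciprocal minimal polynomial. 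Because $\mu(E)$ and $\mu(E^+)$ are coprime, the block upper triangular matrix is similar to $E^+ \oplus E$, and it is cyclic with minimal polynomial $(x^3+x^2-1)(x^3-x-1)$. Call it $Z$. Then $P = Z \cdot (\text{a conjugate of } \widetilde{Q})^{-1}$, and the inverse of an orthogonally indecomposable element of type 1 is again of that type. This gives $P = ZQ$ as claimed.

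The main obstacle is the existence of $X$. The matrices $XCX'$ run over all matrices congruent to $C$. Since $C = US$ is not symmetric, this is a congruence class of a nonsymmetric bilinear form. The task is to show that $B$ times some member of this class has characteristic polynomial $x^3 - x - 1$; the determinant condition $\det(BXCX') = 1$ is arranged by the discriminant matching above. Over $\GF(3)$ this is a finite check. One route is to fix $B$ in the diagonal normal forms $\mathrm{diag}(1,1,\pm 1)$ and exhibit an explicit $X$ by computation. The alternative is a counting argument on $\GL(3,3)$: count the $X$ for which $BXCX'$ has the right trace and second coefficient. I would try explicit choices first, for instance $X$ a permutation matrix times a diagonal matrix, adjusting the free entry $S_{12}$ of $S$ to tune the trace.
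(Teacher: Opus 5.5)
\textbf{Gap: the existence of $X$ is never established.} Your framework is the paper's. Proposition~\ref{proposition-2} puts $P$ in the form with lower-left block $B$ and zero lower-right block, Lemma~\ref{lemma-7} supplies the type-1 element $\widetilde{Q}$, and the product is block upper triangular with diagonal blocks $E^+$ and $E$. The following parts are all correct: your block computation, the determinant bookkeeping, and the closing argument (coprimality of $\mu(E)$ and $\mu(E^+)$ makes $Z$ cyclic with the right minimal polynomial, and $Q^{-1}$ is again orthogonally indecomposable of type 1). But you never show that there is an $X$ with $\chi(BXCX')\in\{x^3-x-1,\,x^3+x^2-1\}$, the step you yourself call the main obstacle. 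You only describe how you would search for one, by explicit trials or by counting over $\GL(3,3)$. Everything else in the argument is formal, so as written nothing has been proved.

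\textbf{The missing idea: normalize $B$ to a scalar, so no search over $X$ is needed.} Over $\GF(3)$, $\det(-\Idm_3)=-1$ is a nonsquare, so every nondegenerate symmetric $B$ is congruent to $\varepsilon\Idm_3$ with $\varepsilon=\pm1$. Conjugating $P$ by $\mathrm{diag}(Y,Y^+)$ replaces $B$ by $(Y^{-1})'BY^{-1}$ and preserves the shape of $P$, so you may assume $B=\varepsilon\Idm_3$. Your proposed normal forms $\mathrm{diag}(1,1,\pm1)$ are less convenient precisely because they are not scalar.

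Now take $X=\Idm_3$ and $S=\varepsilon S_0$, where $S_0$ is antidiagonal with entries $1,-1,1$. It satisfies the condition of Lemma~\ref{lemma-7}, since $-1+(-1)-1=-3=0$ in $\GF(3)$; this sign choice is exactly your discriminant matching. Then
\[
E=\varepsilon\Idm_3\cdot U\cdot\varepsilon S_0=US_0=\begin{pmatrix}0&-1&1\\1&-1&0\\1&0&0\end{pmatrix},
\]
which has trace $-1$, principal $2\times2$ minors summing to $0$, and determinant $1$. Hence $\chi(E)=x^3+x^2-1$. This cubic has no root in $\GF(3)$, so it is irreducible and $E$ is cyclic.

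This is precisely the paper's proof. The paper normalizes to $B=-\Idm_3$ in its sign convention and effectively uses $X=\Idm_3$; your choice $S=\varepsilon S_0$ handles the other discriminant class explicitly, where the paper just says ``we may assume''.

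A minor slip: your formula $\det C=\mp 2S_{13}^3$ is garbled. It should read $\det C=-2S_{13}^3=S_{13}$ for $S_{13}=\pm1$.
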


\begin{proof}
Let 
\[
U = \left (\begin{array} {ccc} 1  & 1 & 0\\ 0 & 1 & 1  \\ 0 & 0 & 1\end{array} \right ),
S = \left (\begin{array} {ccc} 0  & 0 & 1\\ 0 & -1 & 0  \\ 1 & 0 & 0 \end{array} \right ),
Q = \left (\begin{array} {cc} 0  & US\\ -(US)^+ & U^+ + U^S  \end{array} \right ), 
\]
By lemma \ref{lemma-7}, $Q$ is orthogonally indecomposable of type 1. Now $US$ has minimal polynomial $x^3+x^2 -1$.
By proposition \ref{proposition-2}, we may assume that
\[
P = \left (\begin{array} {cc} A &  B\\ -B^{-1} & 0 \end{array} \right).
\]
As $K$ is finite, $B$ is congruent to $\Idm_3$ or $-\Idm_3$. So we may assume 
that $B = -\Idm_3$.
Hence  
\[
PQ = \left (\begin{array} {cc} A & -\Idm_3 \\ \Idm_3 & 0 \end{array} \right)
\left (\begin{array} {cc} 0  & US \\ -(US)^+ & U^+ + U^S  \end{array} \right )
= \left (\begin{array} {cc} (US)^+  & \ast\\ 0& US \end{array} \right )
\]
has minimal polynomial $(x^3+x^2-1)(x^3-x-1)$, and $PQ$ is 3-reflectional by \ref{lemma-0}.
\end{proof}

\begin{lemma}  \label{lemma-8a}
	$\SpG(4, 3)$ is 5-reflectional.
\end{lemma}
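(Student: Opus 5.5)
We split every $P \in \SpG(4,3)$ by its orthogonal decomposition and treat the indecomposable and decomposable cases separately.

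\emph{Indecomposable case.} If $P$ is orthogonally indecomposable, then Lemma~\ref{lemma-2} gives that $P$ is 4-reflectional. Hence $P$ is 5-reflectional: a product of 4 involutions times the identity, or times $-\Idm_4$ when we also need the involution count to be odd.

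\emph{Decomposable case.} Otherwise $P = P_1 \boxplus P_2$ with $P_1, P_2 \in \SpG(2,3)$. We first dispose of the cases handled earlier.
\begin{enumerate}
	\item If $P$ has an orthogonally indecomposable summand of type 1 of dimension 4 (minimal polynomial $(x\pm1)^2$ on a 4-space, of the form in Proposition~\ref{proposition-1}), then $P$ is bireflectional by \cite[Theorem 1.2]{KNielsen-2025a}.
	\item If $\mu(P) = (x+1)(x-1)^2$, or the same up to replacing $P$ by $-P$, Lemma~\ref{lemma-3} shows $P$ is 3-reflectional.
	\item If $\mu(P) = (x-1)(x^2+1)$, or the same up to replacing $P$ by $-P$, Lemma~\ref{lemma-4} shows $P$ is 4-reflectional.
\end{enumerate}
Since $P$ is 5-reflectional whenever $-P$ is, we may apply these reductions up to sign throughout.

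\emph{Enumeration of the remaining classes.} The elements of $\SpG(2,3)=\mathrm{SL}(2,3)$ have minimal polynomial one of
\[
x\mp1,\quad (x\mp1)^2,\quad x^2+1,\quad x^2\pm x-1 .
\]
The order-3 and order-6 classes are exactly the $(x\mp1)^2$ transvection-type classes; there are two such classes for each sign. So the remaining sums $P_1 \boxplus P_2$ form a short list:
\begin{itemize}
	\item pairs of transvection types $(x\pm1)^2 \boxplus (x\pm1)^2$, with the two summands in the same or different classes;
	\item $(x\pm1)^2 \boxplus (x^2+1)$;
	\item $(x\pm1)^2 \boxplus \pm\Idm_2$;
	\item $(x^2+1)\boxplus(x^2+1)$;
	\item $\pm\Idm_2 \boxplus \pm\Idm_2$ and $\pm\Idm_2 \boxplus (x^2+1)$, both of which are trivial: the first is an involution, and $x^2+1$ in $\SpG(2,3)$ is a skew-involution, hence a product of two involutions.
\end{itemize}
The element $x^2+1$ is a product of two involutions of $\SpG(2,3)$ only after enlarging to a 4-space. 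Concretely, in $\SpG(4,3)$ the element $J\boxplus J$ with $J^2=-\Idm_2$ is a product of two involutions, namely $\mathrm{diag}(1,-1)$-type swaps within a hyperbolic frame.

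\emph{Strategy for the transvection-type sums.} For each remaining $P$, we write $P=\sigma R$ with $\sigma$ an involution chosen so that $R$ falls into an already-settled 4-reflectional class. Natural targets for $R$ are:
\begin{itemize}
	\item an orthogonally indecomposable class covered by Lemma~\ref{lemma-2};
	\item a class as in Lemma~\ref{lemma-3};
	\item a class of the form $\mathrm{diag}(A,A^+)$ with $A\in\GL(2,3)$ and $\det A=\pm1$, which is 3-reflectional by Lemma~\ref{lemma-0}.
\end{itemize}
To find $\sigma$, we take $P$ in block form with respect to a symplectic basis $e_1,e_2,f_1,f_2$, so that each transvection summand is upper unitriangular in its plane. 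Then we multiply by an involution swapping the two hyperbolic planes, e.g. $\sigma=\begin{pmatrix}\RevIdm_2&0\\0&\RevIdm_2\end{pmatrix}$, or by $\mathrm{diag}(1,-1,1,-1)$. This product becomes block triangular $\begin{pmatrix}A&\ast\\0&A^+\end{pmatrix}$ with $A$ having minimal polynomial $x^2\pm x-1$ or $x^2+1$. Such matrices are handled exactly as in the first part of the proof of Lemma~\ref{lemma-2}: they are 3- or 4-reflectional, and so $P$ is at most 5-reflectional.

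\emph{Main obstacle.} The hard step is the pair of two transvections of \emph{different} classes, i.e. with discriminants differing by a nonsquare. Here the block-triangular normal form may not yield a cyclic $A$ with coprime $A,A^+$. For this case we will instead try $\sigma$ equal to $-\Idm_2$ on one plane and $\Idm_2$ on the other, turning $P$ into $(x-1)^2\boxplus(x+1)^2$-type. We then multiply by a further involution mixing the planes to reach minimal polynomial $x^4+1$, which is 3-reflectional by Lemma~\ref{lemma-2}. That would exhibit $P$ as a product of at most $1+1+3=5$ involutions; the remaining work is an explicit $4\times4$ check over $\GF(3)$.
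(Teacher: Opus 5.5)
You have the right reductions: Lemma \ref{lemma-2} for the indecomposable case, Lemmas \ref{lemma-3} and \ref{lemma-4} up to sign, and \cite[Theorem 1.2]{KNielsen-2025a} for the bireflectional classes. There is a genuine gap, though, in the leftover orthogonal sums, which is where all the content lies.

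Take a sum of two transvections $P=\begin{pmatrix}\Idm_2&D\\0&\Idm_2\end{pmatrix}$ and a block-diagonal $\sigma=S\oplus S^+$; both of your choices, $\RevIdm_2\oplus\RevIdm_2$ and $\mathrm{diag}(1,-1,1,-1)$, are of this form. Then $\sigma P=\begin{pmatrix}S&SD\\0&S^+\end{pmatrix}$. Its diagonal block is the involution $S$, never a matrix with minimal polynomial $x^2\pm x-1$ or $x^2+1$. Any other $\sigma$ keeping the product triangular for the same Lagrangian must stabilize that Lagrangian, with the same outcome.

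Changing the Lagrangian does not help for $U\boxplus U$. The form $v\mapsto f(v(P-\Idm_4),v)$ induced on $V/\ker(P-\Idm_4)$ is $\pm\langle 1,1\rangle$, which is anisotropic over $\GF(3)$. So $\ker(P-\Idm_4)$ is the only $P$-invariant Lagrangian, and every triangular form of $U\boxplus U$ has $A=\Idm_2$.

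The sums $(x\pm1)^2\boxplus(x^2+1)$, e.g.\ $U\boxplus X$, have no invariant Lagrangian at all, and you give no argument for them. You have also put the difficulty in the wrong place. Two transvections of different classes, $U\boxplus U^{-1}$, form a hyperbolic element that is already bireflectional by \cite[Theorem 1.2]{KNielsen-2025a}. Your $1+1+3$ plan, left as an unchecked computation, targets a case that needs nothing.

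What is missing are factorizations for the genuinely hard classes.

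\emph{The classes $U\boxplus U$ and $-U\boxplus U$.} The paper writes $U\boxplus U=\begin{pmatrix}\Idm_2&\Idm_2\\0&\Idm_2\end{pmatrix}$ as a product of two skew-involutions, $\begin{pmatrix}Y&-Y\\0&-Y\end{pmatrix}\begin{pmatrix}-Y&0\\0&Y\end{pmatrix}$ with $Y^2=-\Idm_2$. Take $Y$ symmetric, e.g.\ $Y=\begin{pmatrix}1&1\\1&-1\end{pmatrix}$, so that both factors are symplectic. Each factor is bireflectional, so $U\boxplus U$ is 4-reflectional. Then $-U\boxplus U=(-\Idm_2\boxplus\Idm_2)(U\boxplus U)$ is 5-reflectional. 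This element is extremal: it is conjugate to its negative, and by the final remark it lies in neither $\Omega^3$ nor $\Omega^4$. So it genuinely needs five involutions, and whatever you multiply it by first must land on something that is not 3-reflectional.

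\emph{The remaining classes.} For $U\boxplus X$ the paper uses $U\boxplus X\sim(U\boxplus U^{-1})(\Idm_2\boxplus -U^+)$. This is bireflectional times 3-reflectional, the latter by Lemma \ref{lemma-3} applied to the negative. The transvection $\Idm_2\boxplus U$ equals $(-\Idm_2\boxplus\Idm_2)(-\Idm_2\boxplus U)$, again using Lemma \ref{lemma-3}. Finally, $-U\boxplus U^{-1}$ is an involution times $U\boxplus U^{-1}$.

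\emph{Two smaller errors.} First, $x^2\pm x-1$ has constant term $-1$, so it is not a minimal polynomial in $\SpG(2,3)$. Second, the only involutions of $\SpG(2,3)$ are $\pm\Idm_2$, so $\pm\Idm_2\boxplus X$ is not bireflectional: an inverting involution would restrict to $\pm\Idm_2$ on the $X$-plane. That case is covered by Lemma \ref{lemma-4}, not trivially.
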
	

\begin{proof}
Let
\[
U = \left (\begin{array} {cc} 1 & 1\\ 0 & 1 \end{array} \right ),
X = \left (\begin{array} {cc} 0 & 1\\ -1 & 0 \end{array} \right ).
\]
Clearly $P \in \SpG(4, 3)$ is k-reflectional if $-P$ or $P^{-1}$ is k-reflectional.
By \cite[Theorem 1.2]{KNielsen-2025a}, $U \boxplus U^{-1}$ and $X\boxplus X$ are bireflectional.
		
By lemmata \ref{lemma-2}, \ref{lemma-3}, and \ref{lemma-4},
we are  left with the matrices  $-U \boxplus U^{-1}$, $\Idm_2 \boxplus U$, $U \boxplus U$, $-U \boxplus U$, and $U \boxplus X$.
		 
Clearly, $-U \boxplus U^{-1}$ is 3-reflectional, and 
$\Idm_2 \boxplus U$ is 4-reflectional as $-\Idm_2 \boxplus U$ is 3-reflectional.
The matrix $U \boxplus U$ is a product of 2 skew-involutions: 
\[
\left (\begin{array} {cc} \Idm_2 & \Idm_2\\ 0 & \Idm_2 \end{array} \right )
	 = \left (\begin{array} {cc} X & -X\\ 0 & -X \end{array} \right )
\left (\begin{array} {cc} -X & 0\\ 0 & X \end{array} \right ).
\]
 Hence $U \boxplus U$ is 4-reflectional, and $-U \boxplus U$ is 5-reflectional.
Finally, $U \boxplus X$ is conjugate to $(U \boxplus U^{-1} ) (\Idm_2 \boxplus -U^+)$.
It follows that $U \boxplus X$ is 5-reflectional.
\end{proof}	

\subsection{Alternative proof of theorem \ref{theorem-2}}

\begin{lemma} \label{lemma-9}
	Let $n = 2m+3 \ge 5$.  Let $P \in \SpG(2n,3)$ be (type 1)-free. Then $P = R Q$ is the product of 2 symplectic matrices $R$ and $Q  = Q_1 \boxplus Q_2$, where
	\begin{enumerate}
		\item $Q_1 \in \SpG(6,3)$ is orthogonally indecomposable of type 1,
		\item $\mu(Q_2) = x^2+1$,
		\item  $R$ is semisimple with characteristic polynomial $\chi(R)= (x^3 +x^2-1)(x^3-x-1)(x^2+x-1)^m (x^2-x-1)^m$.
	\end{enumerate}
\end{lemma}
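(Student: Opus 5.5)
I would imitate the proofs of Lemma \ref{lemma-8} and Lemma \ref{lemma-7a}: put $P$ into a block form, choose $Q$ in a matching block form, and check that $PQ^{-1}$ (or $PQ$, after replacing $Q$ by $Q^{-1}$) is block triangular with diagonal blocks $E^+$ and $E$, where $E$ has the prescribed characteristic polynomial.

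First, by Proposition \ref{proposition-2}, I may assume
\[
P = \left (\begin{array} {cc} A & B\\ -B^{-1} & 0 \end{array} \right)
\]
with $B$ symmetric. Since $K=\GF(3)$ is finite, $B$ is congruent to $\Idm_n$ or to $\Idm_{n-1}\oplus(-1)$. Conjugating by $\left(\begin{smallmatrix} Y & 0\\ 0 & Y^+\end{smallmatrix}\right)$, and multiplying by $-1$ if helpful, I may assume that $B$ is a prescribed diagonal matrix $B = -(\pm\Idm_3)\oplus(\pm \Idm_{2m})$. The sign choices are dictated by the discriminant. The factor $Q$ will be taken in the same shape as $\widetilde Q$:
\[
Q = \left (\begin{array} {cc} 0 & C\\ -C^+ & D \end{array} \right),\qquad C = US\oplus C_2 .
\]
Here $Q_1$ is the matrix from Lemma \ref{lemma-8}, with $C_1=US$ and $\mu(US)=x^3+x^2-1$. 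For $Q_2\in \SpG(4m,3)$ I take $\left(\begin{smallmatrix} 0 & C_2\\ -C_2^+ & 0\end{smallmatrix}\right)$ with $C_2$ symmetric, so that $Q_2^2=-\Idm$ and $\mu(Q_2)=x^2+1$. As in Lemma \ref{lemma-8}, multiplying out gives
\[
PQ = \left (\begin{array} {cc} BC^+ & \ast\\ 0 & -B^{-1}C \end{array} \right),
\]
up to sign conventions. The bottom-right block is then $E:=\pm B^{-1}C = \pm B_1^{-1}US\oplus \pm B_2^{-1}C_2$.

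The remaining task is to choose $C_2$ so that $E$ has the right characteristic polynomial. I need $\pm B_2^{-1}C_2$ to be semisimple with characteristic polynomial $(x^2+x-1)^{m/2}(x^2-x-1)^{m/2}$, adjusted suitably if $m$ is odd. With $B_2=\pm\Idm_{2m}$ this means finding a \emph{symmetric} $C_2$ with that characteristic polynomial. This is possible: the companion matrix of $x^2\pm x-1$ is similar to the symmetric matrix $\left(\begin{smallmatrix} 0&1\\1&\pm1\end{smallmatrix}\right)$, which is the matrix $D$ of Lemma \ref{lemma-2}. The distribution between the two quadratic factors then only has to make $E$ and $E^{+}$ together give $\chi(R)=\chi(E)\chi(E^+)$. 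Since $(x^2+x-1)^*$ is, up to sign, $x^2-x-1$, and $(x^3+x^2-1)^*$ is, up to sign, $x^3-x-1$, it is enough that $E$ contains the factors $x^3+x^2-1$ and $(x^2+x-1)^m$. In that case $E^+$ contributes $x^3-x-1$ and $(x^2-x-1)^m$. So I simply take $C_2=D\oplus\cdots\oplus D$ with $m$ copies of the $x^2+x-1$ version.

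Then $R:=PQ^{-1}$ (equivalently $PQ$, after renaming $Q\mapsto Q^{-1}$, which preserves properties (1) and (2)) is block upper triangular with diagonal blocks $E^+$ and $E$. Its characteristic polynomial $\chi(E^+)\chi(E)$ is the required one and is squarefree up to the repeated coprime quadratic factors. Because $E$ is semisimple and its eigenvalues are disjoint from those of $E^+$, the Sylvester equation lets me remove the $\ast$ block. Hence $R$ is semisimple.

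The step I expect to be the main obstacle is the sign and discriminant bookkeeping. I must make sure that the congruence class of $B$ over $\GF(3)$ is compatible with the forced shape $\pm B_1^{-1}US$ on the $3$-block, so that its characteristic polynomial is exactly $x^3+x^2-1$ rather than its reflection. If it is not, I will fix it by transferring the sign of $\disc B$ into a single $2\times2$ block of $C_2$: replace one $D$ by $-D$, or use a diagonal congruence, and correspondingly swap one quadratic factor. This is harmless because the final polynomial is symmetric under swapping the two quadratics as a pair.
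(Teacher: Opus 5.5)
You follow the paper's construction. You use the normal form of Proposition~\ref{proposition-2}, take $Q_1$ from Lemma~\ref{lemma-7} with $C_1=US$, and take $Q_2=\left(\begin{smallmatrix}0&C_2\\-C_2^+&0\end{smallmatrix}\right)$ with $C_2$ symmetric. The paper uses $C_2=T=\left(\begin{smallmatrix}0&\Idm_m\\ \Idm_m&\Idm_m\end{smallmatrix}\right)$, which is a permuted sum of copies of $\left(\begin{smallmatrix}0&1\\1&1\end{smallmatrix}\right)$. Then $R=P(Q_1\boxplus Q_2)=\left(\begin{smallmatrix}E^+&\ast\\0&E\end{smallmatrix}\right)$ with $E=-B^{-1}C$, and your Sylvester-equation argument for semisimplicity is a detail the paper omits.

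There is a gap, however, exactly at the step you flag, and the repair you propose does not work. If $B_2=\pm\Idm_{2m}$, then $\det B_2=1$, so $\disc B$ forces the sign of $B_1$. When $\disc B$ is a square you get $B_1=\Idm_3$. The $3$-block of $E$ is then $-US$, whose characteristic polynomial is $x^3-x^2+1$; this is neither $x^3+x^2-1$ nor its reciprocal, so $\chi(R)$ is wrong. No change of $C_2$ touches that block.

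Suppose instead a $2\times2$ block $B'$ of $B_2$ carries the nonsquare determinant, so that $B_1=-\Idm_3$. The corresponding block $C'$ of $C_2$ must satisfy $\det(-B'^{-1}C')=\det C'/\det B'=-1$, so $\det C'$ must be a square. But $D$, $-D$ and every diagonal congruent $YDY'$ have determinant $-\det(Y)^2$, which is a nonsquare in $\GF(3)$. With any of them the block gets characteristic polynomial $x^2+1$ or $x^2\pm x+1=(x\mp1)^2$. That ruins both the prescribed $\chi(R)$ and the coprimality of $E$ and $E^+$ that you need for semisimplicity.

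The fix is one line. Replace $S$ by $-S$ in Lemma~\ref{lemma-7}: $-S$ is still symmetric and upper antitriangular with vanishing alternating antidiagonal sum. Then $C_1=-US$, and the $3$-block is again $-\Idm_3^{-1}(-US)=US$.

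Equivalently, conjugate everything by the similitude $\Idm_n\oplus(-\Idm_n)$. This replaces $B$ by $-B$, which changes $\disc B$ because $n$ is odd, and it preserves all three required properties. The paper's ``we may assume $P=\left(\begin{smallmatrix}A&-\Idm_n\\ \Idm_n&0\end{smallmatrix}\right)$'' tacitly relies on this.

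Your remark about multiplying by $-1$ also works, but only if the sign goes on $Q$ and not on $R$. From $-P=RQ$ you get $P=R(-Q)$. Here $-Q_1$ is still orthogonally indecomposable of type $1$ (elementary divisor $(x+1)^3$) and $\mu(-Q_2)=x^2+1$, whereas $\chi(R)$ is not invariant under $x\mapsto-x$.

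A correct $2\times2$ block of your kind does exist, e.g. $B'=1\oplus(-1)$ and $C'=\left(\begin{smallmatrix}-1&1\\1&1\end{smallmatrix}\right)$, which gives characteristic polynomial $x^2+x-1$. It must, however, have square determinant, so it cannot be congruent to $\pm D$.
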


\begin{proof}
	As above, we may assume that
	\[
	P = \left (\begin{array} {cc} A &  -\Idm_n\\ \Idm_n & 0 \end{array} \right).
	\]
	Let 
	\[
	Q_1 = \left (\begin{array} {cc} 0  & US\\ -{US}^+ & D  \end{array} \right ),
	T = \left (\begin{array} {cc} 0  & \Idm_m\\ \Idm_m & \Idm_m  \end{array} \right ),
	Q_2 = \left (\begin{array} {cc} 0  & T\\ -T^{-1} & 0  \end{array} \right ), 
	\]
	where $Q_1 \in \SpG(6,3) $ is orthogonally indecomposable of type 1 as in lemma \ref{lemma-7}. We have $\mu(Q_2) = x^2+1$. Put 
	\[
	R = P (Q_1 \boxplus Q_2 )=  \left (\begin{array} {cc} {US}^+ \oplus T^{-1}   & \ast\\ 0 &  {US} \oplus T\end{array} \right ).
	\]
	and $Q = (Q_1 \boxplus Q_2 )^{-1}$. We are done.
\end{proof}

\begin{lemma} \label{lemma-10}
	Let $2n \ge 6$ and $2n = 4m + 2\equiv 2 \mod 4$.  Let $|K| \ge 4$. Let $P \in \SpG(2n,K)$ be (type 1)-free. 
	Then there exists a semisimple matrix $Q \in \SpG(2n,K)$ with characteristic polynomial $(x-1)^2 (x-\lambda)^{2m} (x-\lambda^{-1})^{2m}$, where $\lambda^2 \ne 1$, such that $PQ$ is 3-reflectional.
\end{lemma}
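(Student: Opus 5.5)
We follow the pattern of lemmata \ref{lemma-7a}, \ref{lemma-8} and \ref{lemma-9}. By proposition \ref{proposition-2}, we may assume that
\[
P = \left (\begin{array} {cc} A & -B^{-1}\\ B & 0 \end{array} \right),
\]
with $B$ symmetric. The aim is to multiply $P$ on the right by a conjugate of a matrix of the form
\[
\widetilde{Q} = \left (\begin{array} {cc} 0 & C\\ -C^+ & D \end{array} \right),
\]
so that the lower left block of the product vanishes. That block is $B\cdot 0$ from the first column, so the product becomes block upper triangular:
\[
P\widetilde{Q} = \left (\begin{array} {cc} (BC)^{+}\text{-type block} & \ast\\ 0 & BC \end{array} \right).
\]
(After a conjugation by $X\oplus X^+$, the lower right block is $B X C X'$, exactly as in lemma \ref{lemma-7a}.) If the lower right block $E$ is cyclic or bicyclic with $\det E = \pm1$, and $E$, $E^{-1}$ are coprimary, the product is conjugate to $E^+\oplus E$, hence 3-reflectional by lemma \ref{lemma-0}.

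The first task is to produce the required $\widetilde{Q}$ with spectrum $(x-1)^2(x-\lambda)^{2m}(x-\lambda^{-1})^{2m}$, where $n = 2m+1$. I would build $\widetilde{Q} = Q_1 \boxplus Q_2$ with $Q_1 = \Idm_2$ on a hyperbolic plane and $Q_2 \in \SpG(4m, K)$ semisimple with eigenvalues $\lambda^{\pm1}$. However, $Q_1 = \Idm_2$ does not have zero upper left block, so it must first be rewritten as a conjugate of the form $\left(\begin{smallmatrix}0&c\\-c^{-1}&d\end{smallmatrix}\right)$ with trace $2$. This is impossible with $0$ in the corner, since a conjugate of the identity is the identity. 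Hence $Q_1$ cannot be put in this form blockwise.

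The fix is to mix the eigenvalue-$1$ part with one $\lambda$-pair. The $1$-dimensional eigenvalue pieces $1,\lambda,\lambda^{-1}$ in each half give a $6\times 6$ semisimple block $Q_1' \in \SpG(6,K)$ with characteristic polynomial $(x-1)^2(x-\lambda)^2(x-\lambda^{-1})^2$. This block is (type 1)-free only in the sense of proposition \ref{proposition-2} if no $(x\pm1)$ divisor has odd degree; but $(x-1)^2$ is semisimple, i.e.\ two elementary divisors $(x-1)^1$ of odd degree. So proposition \ref{proposition-2} does not apply directly.

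Instead, I would write $Q$ directly in $0$-corner form. A semisimple symplectic matrix $Q$ is conjugate to some $\left(\begin{smallmatrix}0&C\\-C^+&D\end{smallmatrix}\right)$ exactly when there is a Lagrangian $L$ with $L\cap LQ = 0$. For semisimple $Q$ with a $1$-eigenspace $W$ of dimension $2$ (a hyperbolic plane) this holds. Choose $L = L_0 \oplus L_1$, with $L_1$ a Lagrangian in the $\lambda$-part transverse to its image; this is possible since $(x-\lambda)$, $(x-\lambda^{-1})$ have no $\pm1$ eigenvalues, using proposition \ref{proposition-2}. In $W\oplus(\text{one }\lambda\text{-pair})$, I would choose a Lagrangian not of product form, e.g.\ spanned by $w_1+e_\lambda$, $w_2 + e_{\lambda^{-1}}$, and so on, and check transversality by a small computation. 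Then $C$ is determined up to congruence data, and $D$ is arbitrary.

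The second and main step is choosing the conjugator $X$ (equivalently the free parameters in $C$) so that $E = BXCX'$ is cyclic of determinant $\pm1$ with $E$ and $E^{-1}$ coprimary. We have $\det E = \det B \det C (\det X)^2$, and $\det C$ is fixed up to squares. Since $\lambda$ is a free parameter, I would use $|K|\ge 4$ to pick $\lambda$ and the scaling in the $1$-part so that $\det B\det C$ is a square, i.e.\ $\disc C = \disc B$, as in lemma \ref{lemma-7a}. Then a choice of $X$ makes $\det E = 1$ and $E$ cyclic with an irreducible (or suitably generic) characteristic polynomial not self-reciprocal. For infinite $K$ this is a Zariski-density argument. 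For finite $K$ with $|K|\ge4$, I would instead exhibit $E$ as a companion matrix of a chosen non-self-reciprocal polynomial of constant term $\pm1$, and realize it as $B\cdot(XCX')$ using that symmetric-type matrices of equal discriminant over a finite field are congruent. I expect this matching of discriminants and the existence of a suitable $\lambda$ with $\lambda^2\ne1$ (needing $|K|\ge4$) to be the crux.
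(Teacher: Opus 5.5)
Your frame is the paper's: normalize $P=\left(\begin{smallmatrix}A&-B^{-1}\\ B&0\end{smallmatrix}\right)$ by Proposition~\ref{proposition-2}, multiply by a conjugate $\left(\begin{smallmatrix}0&C\\ -C^+&D\end{smallmatrix}\right)$ of $Q$ to get $\left(\begin{smallmatrix}E^+&\ast\\ 0&E\end{smallmatrix}\right)$ with $E=BC$, and finish with Lemma~\ref{lemma-0}. Your observation that the $1$-eigenplane must be mixed with one $\lambda$-pair is also what the paper does: its $4\times4$ block has corner $U=\left(\begin{smallmatrix}0&1\\ \lambda&0\end{smallmatrix}\right)$. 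Your sample Lagrangian needs a sign, e.g.\ $\langle w_1+e_\lambda,\,w_2-e_{\lambda^{-1}}\rangle$ when $f(w_1,w_2)=f(e_\lambda,e_{\lambda^{-1}})=1$.

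\textbf{The gap.} The gap is in the step you yourself call the crux, and the argument you sketch for it fails. For finite $K$ you want to realize a prescribed companion matrix $E$ as $B\cdot XCX'$ by matching discriminants of ``symmetric-type'' matrices. But $Q$ has two elementary divisors $x-1$ of odd degree. By Proposition~\ref{proposition-2}, no conjugate of $Q$ then has a zero corner with \emph{symmetric} $C$, so every admissible $C$, and every $XCX'$, is a nonsymmetric bilinear form. Its congruence class is not determined by its discriminant; the similarity class of the cosquare $C^{-1}C'$ is a further invariant. So there is no reason for $B^{-1}E$ to be congruent to $C$. Likewise ``$\det C$ is fixed up to squares'' is unfounded, since $\det C$ depends on the chosen Lagrangian. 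For infinite $K$, Zariski density only helps once the set of good $X$ is known to be nonempty, i.e.\ once one admissible configuration has been exhibited. That construction is exactly what is missing.

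\textbf{How the paper closes it.} Keep everything block diagonal, and use the freedom in the hyperbolic part rather than a single congruence orbit $\{XCX'\}$.
Diagonalize $B$ by congruence, $B=C_1\oplus C_2$ with $C_1\in\GL(2,K)$, and $C_1=\Idm_2$ if $K$ is finite. Take $Q=X\boxplus Y$ where
\begin{itemize}
\item $X=\left(\begin{smallmatrix}0&U\\ -U^+&\Idm_2+U^+U\end{smallmatrix}\right)$ is semisimple with characteristic polynomial $(x-1)^2(x-\lambda)(x-\lambda^{-1})$;
\item $Y=\left(\begin{smallmatrix}0&S\\ -S^{-1}&(\lambda+\lambda^{-1})\Idm_{n-2}\end{smallmatrix}\right)$.
\end{itemize}
Since $Y^2-(\lambda+\lambda^{-1})Y+\Idm=0$ for \emph{every} symmetric invertible $S$, the conjugacy class of $Q$ does not depend on $S$. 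The product is then $\left(\begin{smallmatrix}E^+&\ast\\ 0&E\end{smallmatrix}\right)$ with $E=C_1U\oplus C_2S$. Here $C_1U$ has characteristic polynomial $x^2+\mu$ with $\mu:=-\lambda\det C_1$, and $C_2S$ can be an arbitrary invertible diagonal matrix.

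Now you prescribe the spectrum by hand. Choose $\lambda$ with $\lambda^2\neq1$, $\mu^2\neq1$ and $\mu^3\neq-1$; this is where $|K|\ge5$ enters. Let $C_2S$ be diagonal with $m$ entries $-\mu^{-1}$ and $m-1$ entries $\mu$. Then $\det E=(-1)^m$ and $E$ is coprime to $E^{-1}$. Finally $E^+\oplus E$ splits orthogonally into summands built from $C_1U\oplus(-\mu^{-1})$ and $m-1$ copies of $\mathrm{diag}(\mu,-\mu^{-1})$. Each of these is cyclic or bicyclic of determinant $-1$, so Lemma~\ref{lemma-0} applies to each.
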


\begin{proof}
	Let $\lambda \in K^*$ with $\lambda^2 \ne 1$. Let
	\[
	U = \left (\begin{array} {cc} 0 & 1\\ \lambda & 0 \end{array} \right),
	X = \left (\begin{array} {cc} 0 & U\\ -U^+ & \Idm_2 + U^+ U \end{array} \right),
	\]
	\[
	Y = \left (\begin{array} {cc} 0 & S\\ -S^{-1} & (\lambda + \lambda^{-1})\Idm_{n-2}  \end{array} \right).
	\]
	Then $\mu(X) = (x-1)(x-\lambda)(x-\lambda^{-1})$ and $\mu(Y) = (x-\lambda)(x-\lambda^{-1})$.
	Let 
	\[
	Q = X \boxplus Y = \left (\begin{array} {cccc} 0 & 0 & U & 0\\  0 & 0 & 0 & S\\
		-U^+ & 0 & D_1 & 0\\ 
		0  & -S^{-1} & 0 & D_4\\ \end{array} \right).
	\]
	Then $\mu(Q) = (x-1)(x-\lambda)(x-\lambda^{-1})$ and $Q$ is bireflectional. 
	We may assume that
	\[
	P = \left (\begin{array} {cccc} A_1 & A_2 & -C_1^{-1} & 0\\  A_3 & A_4 & 0 & -C_2^{-1}\\
		C_1 & 0 & 0 & 0\\ 
		0  & C_2 & 0 & 0\\ \end{array} \right),
	\]
	where $C_1 \in \GL(2, K)$ and $C_2 \in \GL(n-2, K)$ are symmetric. 
	If $K$ is finite, we may aditionally assume that $C_1 = \Idm_2$. Now for a suitable 
	$\lambda \in K$ and symmetric matrix $S$, the matrix
	$E := C_1 U \oplus C_2 S$ is semisimple with 
	characteristic polynomial $(x^2+\mu) (x+\mu^{-1})^{\frac{n-3}{2}} (x-\mu)^{\frac{n-3}{2}}$, where $\mu^2 \ne 1$.
	Again by \ref{lemma-0}, $E$ is 3-reflectional. Hence 
	\[
	P Q = \left (\begin{array} {cccc} A_1 & A_2 & -C_1^{-1} & 0\\  A_3 & A_4 & 0 & -C_2^{-1}\\
		C_1 & 0 & 0 & 0\\ 
		0  & C_2 & 0 & 0\\ \end{array} \right)
	\left (\begin{array} {cccc} 0 & 0 & U & 0\\  0 & 0 & 0 & S\\
		-U^+ & 0 & D_1 & 0\\ 
		0  & -S^{-1} & 0 & D_4\\ \end{array} \right)
	\]	
	\[	
	=  \left (\begin{array} {cccc} (C_1 U)^+ & 0 & \ast & \ast\\  0 & (C_2 S)^+ & \ast& \ast\\
		0 & 0 & C_1 U & 0\\ 
		0  & 0 & 0 & C_2 S\\ \end{array} \right)
	\]
	is 3-reflectional, as $E$ and its inverse are coprimary.
\end{proof}

\section{Proof of theorem \ref{theorem-3}}

\begin{lemma}   \label{lemma-11}
	Let $P \in \SpG(4, 3)$ with $\mu(P) = (x-1)(x^2+1)$. There exist involutions 
	$S_1, S_2 \in \SpG(4,3)$ such that $\mu(PS_1) = (x^2+x-1) (x^2-x-1)$ and  
	$\mu(PS_2) = (x^2+1) (x-1)^2$.
\end{lemma}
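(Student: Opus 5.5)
By the primary decomposition, $V=\Fix(P)\perp W$, where $W=\ker(P^2+1)$. Both summands are nondegenerate planes. On $W$, $P$ acts as an element of order $4$ in $\SpG(2,3)=\mathrm{SL}(2,3)$, and all such elements are conjugate (they lie in the quaternion subgroup, where the order $3$ elements permute $\pm i,\pm j,\pm k$ transitively). Hence $P$ is unique up to conjugacy, and I would fix the normal form
\[
P=\Idm_2\boxplus X,\qquad X=\left(\begin{array}{cc}0&1\\-1&0\end{array}\right),
\]
with respect to a symplectic basis $e_1,e_2,f_1,f_2$. It then suffices to exhibit two explicit involutions. Apart from $\pm\Idm_4$, the involutions of $\SpG(4,3)$ are exactly the maps $\sigma_N$ with $\Fix(\sigma_N)=N$ and $-1$ on $N^\perp$, where $N$ is a nondegenerate plane. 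So I would search among the $\sigma_N$. The choices $\sigma_N=\pm\Idm_4$ and $N\in\{\Fix(P),W\}$ clearly fail, so $N$ must meet both summands nontrivially.

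For a symplectic $4\times 4$ matrix $M$ the characteristic polynomial has the form $x^4-ax^3+bx^2-ax+1$. Here $a=\operatorname{tr}M$, and $b$ is the second elementary symmetric function, which equals $\tfrac12((\operatorname{tr}M)^2-\operatorname{tr}M^2)$. Over $\GF(3)$ we have $(x^2+x-1)(x^2-x-1)=x^4-3x^2+1=x^4+1$, which is separable. So for $S_1$ I only need $\operatorname{tr}(PS_1)=0$ and $\operatorname{tr}((PS_1)^2)=0$; the minimal polynomial then automatically equals the characteristic polynomial. For $S_2$ I need the characteristic polynomial
\[
(x^2+1)(x-1)^2=x^4-2x^3+2x^2-2x+1=x^4+x^3-x^2+x+1,
\]
that is, $\operatorname{tr}(PS_2)=-1$ and $b=-1$. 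In addition I need $PS_2$ to be non-semisimple at the eigenvalue $1$, which amounts to $\dim\Fix(PS_2)=1$.

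Concretely, I would take $N=\langle u,v\rangle$ with $u=e_1+e_2$ and $v$ a suitable combination of $f_1$ and $f_2$, normalised so that $f(u,v)=1$. Then $\sigma_N=-1+2\pi_N$ with $\pi_N(x)=f(x,v)u-f(x,u)v$. This gives
\[
\operatorname{tr}(P\sigma_N)=-\operatorname{tr}P+2\operatorname{tr}(P\pi_N)=-(2+0)+2\bigl(f(uP,v)-f(vP,u)\bigr),
\]
which is a short linear computation in the few parameters of $v$. Similarly, $\operatorname{tr}((P\sigma_N)^2)$ is a low-degree expression in the same parameters. I would tabulate the few candidates over $\GF(3)$ and pick one $N_1$ giving trace $0$ and $b=0$, and one $N_2$ giving trace $-1$ and $b=-1$.

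The main obstacle is the second involution. Matching the characteristic polynomial is not enough: I must also ensure that $PS_2$ has a single Jordan block for the eigenvalue $1$, which amounts to checking that $\Fix(PS_2)=\ker(PS_2-1)$ is one-dimensional. I would verify this directly by computing the rank of $PS_2-\Idm_4$ for the chosen $N_2$. If the first candidate gives a semisimple product with $\dim\Fix=2$, I would move $v$ so that $N_2$ is in more general position relative to $\Fix(P)$ and $W$. Once both explicit matrices are found, the statement follows by direct multiplication, exactly in the style of the explicit verifications in Lemmas \ref{lemma-2} and \ref{lemma-3}.
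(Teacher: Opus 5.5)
Your approach is the paper's: put $P$ in a normal form (the paper uses $X\boxplus\Idm_2$), write down two explicit involutions, and verify the products by direct computation. Your argument that $P$ is unique up to $\SpG(4,3)$-conjugacy is a useful addition, since the paper leaves it implicit. As written, though, this is a search plan rather than a proof. The lemma \emph{is} the existence of $S_1,S_2$, and you never exhibit them. Your fallback, moving $v$ into more general position, would also leave your own ansatz, which contains only three admissible planes.

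The ansatz does succeed. Let $P$ fix $e_1,f_1$ and send $e_2\mapsto f_2\mapsto -e_2$, with maps acting on the right as in the paper. Since $f(e_1+e_2,af_1+bf_2)=a+b$, the admissible choices up to scaling are $v\in\{f_1,\,f_2,\,-(f_1+f_2)\}$.

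\textbf{The case $v=f_2$.} Your trace formula gives $\operatorname{tr}(P\sigma_N)=1$, so this plane is of no use.

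\textbf{The case $v=-(f_1+f_2)$.} Here $N_1=\langle e_1+e_2,f_1+f_2\rangle$ and $N_1^\perp=\langle e_1-e_2,f_1-f_2\rangle$. So $S_1=\sigma_{N_1}$ is the swap $e_1\leftrightarrow e_2$, $f_1\leftrightarrow f_2$, and $PS_1$ acts by
\[
e_1\mapsto e_2\mapsto f_1\mapsto f_2\mapsto -e_1 .
\]
Thus $e_1$ is a cyclic vector and $(PS_1)^4=-1$. Hence $\mu(PS_1)=x^4+1=(x^2+x-1)(x^2-x-1)$.

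\textbf{The case $v=f_1$.} Here $N_2=\langle e_1+e_2,f_1\rangle$ and $N_2^\perp=\langle e_2,f_1-f_2\rangle$. The product $PS_2$ sends $e_1\mapsto e_1-e_2$, $e_2\mapsto -f_1-f_2$, $f_1\mapsto f_1$, and $f_2\mapsto e_2$. One checks $\operatorname{tr}(PS_2)=-1$ and $\operatorname{tr}\bigl((PS_2)^2\bigr)=0$, so $b=-1$ and the characteristic polynomial is $(x-1)^2(x^2+1)$. Moreover $\Fix(PS_2)=\langle f_1\rangle$ is one-dimensional, so $\mu(PS_2)=(x^2+1)(x-1)^2$.

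This $S_2$ preserves $\langle e_1,e_2\rangle$ and $\langle f_1,f_2\rangle$, just like the paper's $\mathrm{diag}(U,U')$. Your $S_1$ differs from the paper's $\left(\begin{array}{cc} X & X\\ X & -X\end{array}\right)$, and has the advantage that $PS_1$ is visibly a signed $4$-cycle.

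One minor point: your remark that $N$ ``must meet both summands nontrivially'' is false as stated, since $N_1$ meets neither $\Fix(P)$ nor $W$. Only $N\notin\{\Fix(P),W\}$ follows, and you do not use even that.
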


\begin{proof}
	Let
	\[
	X = \left (\begin{array} {cc} 0 & -1\\ 1 & 0 \end{array} \right),
	U = \left (\begin{array} {cc} -1 & 0\\ 1 & 1 \end{array} \right),
	S_1 = \left (\begin{array} {cc} X & X\\ X & -X \end{array} \right),
	S_2 = \left (\begin{array} {cc} U & 0\\ 0 & U' \end{array} \right).
	\]
	Then 
	\[
	[X \boxplus \Idm_2] S_1 = \left (\begin{array} {cccc} 0 & 0 & -1 & 0\\  0 & 1 & 0 & 0\\ 
		1 & 0 & 0 & 0\\ 0 & 0 & 0 & 1
	\end{array} \right)
	\left (\begin{array} {cccc} 0 & -1 & 0 & -1\\  1 & 0 & 1 & 0\\ 
		0 & -1 & 0 & 1\\ 1 & 0 & -1 & 0
	\end{array} \right)
	= \left (\begin{array} {cccc} 0 & 1 & 0 & -1\\  1 & 0 & 1 & 0\\ 
		0 & -1 & 0 & -1\\ 1 & 0 & -1 & 0
	\end{array} \right),	 
	\]
	\[
[X \boxplus \Idm_2] S_2 = \left (\begin{array} {cccc} 0 & 0 & -1 & 0\\  0 & 1 & 0 & 0\\ 
		1 & 0 & 0 & 0\\ 0 & 0 & 0 & 1
	\end{array} \right)
	\left (\begin{array} {cccc} -1 & 0 & 0 & 0\\  1 & 1 & 0 & 0\\ 
		0 & 0 & -1 & 1\\ 0 & 0 & 0 & 1
	\end{array} \right)
	= \left (\begin{array} {cccc} 0 & 0 & 1 & -1\\  1 & 1 & 0 & 0\\ 
		-1 & 0 & 0 & 0\\ 0 & 0 & 0 & 1
	\end{array} \right).	 
	\]	
\end{proof}

\begin{lemma}   \label{lemma-12}
	Let $2n \ge 10$.
	Let $P \in \SpG(2n, 3)$. Assume that  $P$ has an elementary divisor $(x \pm 1)^{2t+1}$ of odd degree. Then $P$ is 4-reflectional.
\end{lemma}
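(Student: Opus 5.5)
The plan is to split $P$ orthogonally so that one summand contains a type 1 component and can be made 4-reflectional directly, while the rest is (type 1+)-free and handled by Lemma \ref{lemma-1}. Since $P$ is $k$-reflectional whenever $-P$ is, I may assume that $P$ has an elementary divisor $(x-1)^{2t+1}$.

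First write $P = A \boxplus C$, where $A$ is an orthogonally indecomposable summand of type 1 with $\mu(A) = (x-1)^{2t+1}$. If $\dim A = 2n$, then $P$ is bireflectional by \cite[Theorem 1.2]{KNielsen-2025a}. Otherwise I will reorganize $P = P_1 \boxplus P_2$ so that $P_2$ is 4-reflectional and $P_1$ has even half-dimension at least $4$, or is otherwise known to be 4-reflectional. Two observations drive this.

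\begin{enumerate}
\item The proof of Lemma \ref{lemma-4a} shows that any element of $\SpG(2k,3)$ with $k$ even, $k \ge 4$, whose elementary divisors $(x+1)^d$ all have even degree (or all $(x-1)^d$, after passing to $-P$) is 4-reflectional. It is the product of two semisimple strictly hyperbolic bireflectional transformations.
\item Orthogonally indecomposable type 1 summands are bireflectional. Summands in $\SpG(4,3)$ are covered by Lemmas \ref{lemma-2}, \ref{lemma-3}, \ref{lemma-4} and \ref{lemma-8a}.
\end{enumerate}

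The key new case is a type 1 summand $A = \Idm_2$ or $A = -\Idm_2$ sitting next to a small non-4-reflectional piece. Here I will use Lemma \ref{lemma-11}. For $X \boxplus \Idm_2$ with $\mu = (x-1)(x^2+1)$, multiplying by the involution $S_1$ gives a strictly hyperbolic element with $\mu = (x^2+x-1)(x^2-x-1)$. Multiplying by $S_2$ gives an element with $\mu = (x^2+1)(x-1)^2$. These let me absorb a 2-dimensional type 1 piece together with a 2-dimensional piece carrying $x^2+1$ into something whose remaining factor is covered by Lemma \ref{lemma-1}.

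The procedure is as follows. Group $P = A \boxplus C$ with $C$ (type 1)-free or (type 1+)-free after sign changes. If $\dim A \ge 6$ or $A$ is bireflectional and $C$ has dimension $\ge 8$ with even half-dimension, apply the two observations above separately to $A$ and $C$.

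\textbf{Parity obstacle.} When $\dim C \equiv 2 \mod 4$, or $\dim C = 4$, I will split off a 2-dimensional piece $C_1$ of $C$ and pair it with $A$ (as in the proof of Lemma \ref{lemma-4a}). The pair $A \boxplus C_1$ is then treated by Lemmas \ref{lemma-3}, \ref{lemma-4}, \ref{lemma-11} or by observation 1. The remainder $C_2$ must have half-dimension $\ge 4$ and even, so it falls under observation 1.

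When no suitable split exists (for instance $C$ orthogonally indecomposable of odd half-dimension), I will instead treat $P$ as a whole. Take a strictly hyperbolic class $\Omega$ with $\mu(\Omega)$ built from $(x^3+x^2-1)(x^3-x-1)$ and $(x^2\pm x-1)$, as in Lemma \ref{lemma-4a}. Then use the extra freedom of the odd type 1 block $(x-1)^{2t+1}$ via Lemma \ref{lemma-7} and the construction of Lemma \ref{lemma-8} to write $P$ as a product of two bireflectional elements.

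I expect this last case to be the main obstacle: for small fixed field $\GF(3)$ and $2n = 10$ or $2n = 12$, there is no room to split $P$ into pieces of admissible dimension. I would first try to handle it by a case check using Lemma \ref{lemma-11} to move a $2$-dimensional type 1 block into a $(x^2\pm x -1)$-part.
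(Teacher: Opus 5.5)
There is a genuine gap: the one configuration with real content is left open. Lemma \ref{lemma-4a} covers \emph{every} element of $\SpG(2k,3)$ with $k\ge 4$ even, not only (type 1+)-free ones, so the statement is immediate for $n$ even; in particular $2n=12$ is no obstacle. For $n$ odd, the type 1 summand $U$ carrying the odd elementary divisor has odd half-dimension, so its complement has even half-dimension. Lemma \ref{lemma-4a} together with bireflectionality of $U$ then settles everything except $P=U\boxplus Q$ with $Q\in\SpG(4,3)$ not 4-reflectional and $\dim U=2n-4\ge 6$. This case occurs for every odd $n\ge 5$, so no finite case check disposes of it.

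Neither of your devices reaches this case.
\begin{enumerate}
\item Splitting $Q=C_1\boxplus C_2$ and absorbing $C_1$ into $U$ leaves $C_2\in\SpG(2,3)$ stranded. The only involution there is $-\Idm_2$, and here $C_2\ne\pm\Idm_2$, so $C_2$ is not a product of involutions. In Lemma \ref{lemma-4a} the pairing works only because two type 1 summands are available.
\item The fallback through strictly hyperbolic classes cannot give four involutions when $n$ is odd. Suppose $H\in\SpG(2n,K)$ has $\chi(H)=gg^*$ with $g$ prime to $g^*$, and $\sigma$ is an involution with $\sigma H\sigma=H^{-1}$. Then $\sigma$ maps the totally isotropic $n$-space $L=\ker g(H)$ onto $\ker g^*(H)$, and $(u,v)\mapsto f(u,v\sigma)$ is a nondegenerate alternating form on $L$. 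This forces $n$ to be even. So Lemma \ref{lemma-1} only writes $P$ as a product of non-bireflectional factors, and the construction of Lemma \ref{lemma-8} yields $3+2=5$ involutions, not $4$.
\end{enumerate}

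The missing idea, which is how the paper proceeds, is to spend one involution on the type 1 block itself. This breaks $U$ into pieces that can absorb the two $2$-dimensional summands of $Q$.
\begin{enumerate}
\item Normalize as in the paper: $Q=A\boxplus B$ with $A,B\ne\pm\Idm_2$ and $\mu(B)=(x-1)^2$, using Lemmas \ref{lemma-2}--\ref{lemma-4}.
\item By \cite[Example 4.4]{KNielsen-2025a}, the odd Jordan block $U_{n-2}$ is an involution times a cyclic matrix. Its minimal polynomial is $(x+1)(x-1)^{n-5}(x^2+x-1)$ if $\mu(A)=x^2+1$, and $(x+1)(x-1)^{n-5}(x^2+1)$ if $\mu(A)=(x\pm1)^2$. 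This is exactly where $2n\ge 10$ is needed.
\item Lifting to $U=U_{n-2}\oplus U_{n-2}^+$ gives $U=\sigma W$, where $W$ is conjugate to $-\Idm_2\boxplus Y\boxplus\widetilde U$, resp.\ $-\Idm_2\boxplus X\boxplus X\boxplus\widetilde U$. Here $\widetilde U$ is unipotent hyperbolic, hence bireflectional.
\item Regroup $W\boxplus Q$ as $(-\Idm_2\boxplus B)\boxplus(Y\boxplus A)\boxplus\widetilde U$, resp.\ with $X\boxplus X\boxplus A$ in place of $Y\boxplus A$. Lemmas \ref{lemma-3} and \ref{lemma-11} make every piece 3-reflectional, so $P$ is 4-reflectional.
\end{enumerate}
So Lemma \ref{lemma-11} is applied to pieces manufactured from $U$, not in a stand-alone case check.
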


\begin{proof}
	We may assume that 
	$P = U  \boxplus Q$, where $U$ is orthogonally indecomposable of type 1 and $Q \in \SpG(4,3)$ is not 4-reflectional.
	
	By  lemma \ref{lemma-3},  $Q = A \boxplus B$, where $A, B \in \SpG(2, 3)$.
	Furthermore, we may asume that $A, B \ne \pm \Idm_2$ and $\mu(Q) \ne x^2+1$.
	So we may assume that $\mu(B) = (x-1)^2$.
	Let 
	\[
	U = \left (\begin{array} {cc} U_{n-2} & 0\\ 0 & U_{n-2}^+ \end{array} \right).
	\]
	Assume first that  $\mu(A) = x^2+1$.
	It follows from  \cite[Example 4.4]{KNielsen-2025a} that 
	$U_{n-2}$ is the product of an involution and a cyclic matrix 
	with minimal polynomial
	$(x+1)(x-1)^{n-5}(x^2+x-1)$. Hence $U$ is the product of an involution and a matrix $W$ conjugate to  $-\Idm_2 \boxplus Y \boxplus
	\widetilde{U}$, where $\widetilde{U}$ is unipotent and hyperbolic, and $\mu(Y) = (x^2-x -1)(x^2+x-1$. 
	By lemma \ref{lemma-3}, $-\Idm_2 \boxplus B$ is 3-reflectional.
	And it follows from \ref{lemma-11} that $Y \boxplus A$ is 3-reflectional.
	Hence $W$ is 3-reflectional, as $\widetilde{U}$ is even bireflectional.
	
	So let  $\mu(A) = (x \pm 1)^2$. (In fact $\mu(A) = (x + 1)^2$; see the proof of \ref{lemma-8a})
	Again using  \cite[Example 4.4]{KNielsen-2025a}, we see that 
	$U_{n-2}$ is the product of an involution and a cyclic matrix 
	with minimal polynomial
	$(x+1)(x-1)^{n-5}(x^2+1)$. Thus $U$ is the product of an involution a matrix $W$ conjugate to  $-\Idm_2 \boxplus X  \boxplus X \boxplus
	\widetilde{U}$, where $\widetilde{U}$ is unipotent and hyperbolic, and $\mu(X) = x^2 +1$. 
	And again  it follows from \ref{lemma-3} and \ref{lemma-11} that 
	$-\Idm_2 \boxplus B$ and $X  \boxplus X \boxplus A$ are 3-reflectional.	
\end{proof}

\begin{lemma}     \label{lemma-13}
Let 
\[
P = \left (\begin{array} {cc} A & 0 \\ 0 & A^+  \end{array} \right ) \in \SpG(2n,K),
\]
where $A \sim -A$. Then
\begin{enumerate}
   \item $P$ is the product of a skew-involution and a symplectic involution.
    \item $P$ is the product of a strictly hyperbolic transformation with minimal polynomial $(x-\lambda)(x-\lambda^{-1})$ and a symplectic involution if $|K| \ge 5$.
 \end{enumerate}
\end{lemma}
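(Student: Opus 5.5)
Since $A \sim -A$, fix $G \in \GL(n,K)$ with $G^{-1}AG = -A$. Recall the conventions: $A^+ = (A')^{-1}$, a skew-involution is a symplectic $T$ with $T^2 = -\Idm$, and a symplectic involution satisfies $\sigma^2 = \Idm$. The plan is to find, in both parts, a block antidiagonal symplectic factor that turns $P$ into something block antidiagonal whose square is easy to read off.

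For the symplectic involution I would take
\[
\sigma = \left (\begin{array} {cc} 0 & Y \\ -Y^{+} & 0 \end{array} \right ) \quad\text{or}\quad \left (\begin{array} {cc} 0 & Y \\ Y^{+} & 0 \end{array} \right ),
\]
with $Y$ symmetric or skew-symmetric as the symplectic condition demands. The square of such a matrix is $\pm Y Y^{+} \oplus \pm Y^{+}Y$.

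Rather than searching for $\sigma$ directly, I would write $P = T\sigma$ and require $T$ to have the prescribed square. For
\[
\sigma = \left (\begin{array} {cc} 0 & Y \\ Y^{+} & 0 \end{array} \right ), \qquad Y' = Y,
\]
we have $\sigma^2 = \Idm$, and then
\[
T = P\sigma = \left (\begin{array} {cc} 0 & AY \\ A^+Y^{+} & 0 \end{array} \right ).
\]
Its square is
\[
T^2 = AYA^+Y^{+} \oplus A^+Y^{+}AY .
\]
Taking transposes, the condition $AYA^+Y^{-1} = -\Idm$ is equivalent to $AY = -YA'$, i.e. $Y^{-1}AY = -A'$.

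The key step is therefore to produce a \emph{symmetric} nonsingular $Y$ with $Y^{-1}AY = -A'$. Every matrix is similar to its transpose via a symmetric matrix (Taussky--Zassenhaus); choose such a symmetric $W$ with $W^{-1}A'W = A$. Then $Y = GW^{-1}$ satisfies the similarity, but it need not be symmetric.

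To fix symmetry, I would reduce to a canonical form. The relation $A \sim -A$ means the elementary divisors pair up: $p(x)^k$ with $p(-x)^k$, the self-paired ones (such as $x^k$, or $p$ with $p(-x) = \pm p(x)$) included. So one may assume $A = \bigoplus A_i$ with blocks $A_i = C \oplus (-C)$ or $A_i$ a self-paired companion-type block. For a pair block, take
\[
Y = \left (\begin{array} {cc} 0 & W_0 \\ W_0' & 0 \end{array} \right ),
\]
with $W_0$ intertwining $C$ and $C'$; this $Y$ is symmetric. For a self-paired cyclic block, the space of solutions $Y$ of $AY = -YA'$ is mapped to itself by transposition. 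It contains nonsingular elements, and I would argue that a nonsingular symmetric or skew one exists. The skew case is the trouble: I expect this verification (e.g. for nilpotent Jordan blocks $x^k$, or $A = \Jordan_k(0)$) to be the main obstacle. If only skew solutions exist, I would switch to the other form of $\sigma$, $\left (\begin{array}{cc} 0 & Y \\ -Y^{+} & 0 \end{array} \right )$ with $Y$ skew, and redo the sign bookkeeping. Given the two sign choices, one of them should work; this settles part (1).

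For part (2), I would use the same template but require
\[
H = P\sigma = \left (\begin{array} {cc} 0 & AY \\ \pm A^+Y^{+} & 0 \end{array} \right )
\]
to satisfy $H^2 = -\lambda$-type constraints. Instead, the cleaner route is to take $H = P\sigma$ with $\sigma$ block lower-unitriangular-modified, so that $H$ is block antidiagonal plus a diagonal term $(\lambda+\lambda^{-1})\Idm$, as in the matrix $Y$ of lemma \ref{lemma-10}:
\[
H = \left (\begin{array} {cc} 0 & S \\ -S^{-1} & (\lambda+\lambda^{-1})\Idm_n \end{array} \right ),
\]
which automatically has $\mu(H) = (x-\lambda)(x-\lambda^{-1})$. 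Conjugating by $\mathrm{diag}(1, \ldots)$ shifts, I would set $\sigma = H^{-1}P$ and solve for $S$ so that $\sigma^2 = \Idm$, reducing again to an intertwining $S^{-1}AS = -A'$-type equation with symmetric $S$, as in part (1). The condition $|K| \ge 5$ is used to choose $\lambda$ with $\lambda^2 \ne 1$ and $\lambda^2 \ne -1$, so that $H$ is strictly hyperbolic and the relevant matrices are invertible.
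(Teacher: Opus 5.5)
The proposal has genuine gaps in both parts. Your reduction to a symmetric $Y$ with $AY=-YA'$ is essentially the right condition, and your pair-block construction is correct.

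\textbf{Part (1): the template is not symplectic.} The matrix $\left(\begin{array}{cc} 0 & Y \\ Y^{+} & 0 \end{array}\right)$ is never symplectic. For a block-antidiagonal matrix with upper-right block $B$ and lower-left block $C$, the symplectic condition is $C=-B^{+}$, whatever the symmetry of $B$. So your $\sigma$ and $T=P\sigma$ do not lie in $\SpG(2n,K)$. The right template is $\sigma=\left(\begin{array}{cc} 0 & Y \\ -Y^{+} & 0 \end{array}\right)$. It is a skew-involution for symmetric $Y$ and an involution for skew $Y$. Then $P\sigma=\left(\begin{array}{cc} 0 & AY \\ -(AY)^{+} & 0 \end{array}\right)$, and in both cases one needs exactly $AY=-YA'$. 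For symmetric $Y$ the roles are the reverse of yours: $\sigma$ is the skew-involution and $P\sigma$ the involution. Your key step therefore amounts to writing $A=(AY)Y^{-1}$ as a skew-symmetric times a symmetric matrix, which is the Stenzel--Shoda fact the paper's proof rests on.

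\textbf{Part (1): the key step is not proved.} The self-paired blocks carry all the content, and you leave them open.
\begin{itemize}
\item Nilpotent blocks cannot occur, since $A$ is invertible; the self-paired elementary divisors have the form $q(x^2)^k$.
\item Transpose-stability of the solution space does not by itself give an invertible symmetric or skew solution.
\item You cannot take $Y$ symmetric on some blocks and skew on others, since then $\sigma$ is neither an involution nor a skew-involution.
\end{itemize}
The missing idea is the paper's normal form. One has $n=2m$, and $A$ is similar to $\left(\begin{array}{cc} 0 & \Idm_m \\ C & 0 \end{array}\right)$. Writing $C=QR$ with $Q,R$ symmetric (every matrix is a product of two symmetric matrices) gives
\[
\left(\begin{array}{cc} 0 & \Idm_m \\ C & 0 \end{array}\right)
=\left(\begin{array}{cc} 0 & -Q \\ Q & 0 \end{array}\right)
\left(\begin{array}{cc} R & 0 \\ 0 & -Q^{-1} \end{array}\right).
\]
Hence $Y=R^{-1}\oplus(-Q)$ is a nonsingular symmetric solution for all blocks at once, and you transport it back to $A$ by congruence.

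\textbf{Part (2) fails as planned.} Put $d=\lambda+\lambda^{-1}$ and $H=\left(\begin{array}{cc} 0 & S \\ -S^{-1} & d\Idm_n \end{array}\right)$. Then $H^{-1}=d\Idm_{2n}-H$, and the lower-left block of $(H^{-1}P)^2$ is $dS^{-1}A^2$. Likewise the upper-right block of $(PH^{-1})^2$ is $-dA^2S$. Both are nonzero whenever $d\ne 0$. So for no symmetric $S$ is the block-diagonal $P$ of the form $H\sigma$ or $\sigma H$ with such an $H$, and you must leave block-diagonal form.

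The paper does this by inserting $D=d\Idm_n$ into the factorization from part (1):
\[
\left(\begin{array}{cc} A & 0 \\ DT & A^{+} \end{array}\right)
=\left(\begin{array}{cc} 0 & S \\ -S^{+} & D \end{array}\right)
\left(\begin{array}{cc} 0 & -T^{+} \\ T & 0 \end{array}\right).
\]
For the first factor to be symplectic, $S$ must here be the symmetric factor and $T$ the skew factor of $A=ST$. Such a factorization is obtained by applying the skew-times-symmetric factorization to $A'$ and transposing. The first factor then satisfies $H^2-dH+\Idm_{2n}=0$, and the second is an involution. The product is block lower triangular with diagonal blocks $A$ and $A^{+}$. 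It is therefore conjugate to $P$ when $A$ and $A^{-1}$ are coprime, which is the situation in which the lemma is used in the proof of Theorem~\ref{theorem-3}.

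Finally, $|K|\ge 5$ only guarantees some $\lambda$ with $\lambda^2\ne 1$. In $\GF(5)$ every $\lambda\ne 0$ has $\lambda^2=\pm 1$, so your extra requirement $\lambda^2\ne -1$ cannot be met.
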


\begin{proof}
	It is known that $A = ST$ is the product of an antisymmetric and a symmetric matrix.
	And it is even well known that a matrix is the product of 2 symmetric matrices.
	These  results can e.g. be found in the papers of Stenzel \cite{Stenzel1922} or
	Shoda \cite{Shoda1929}. We give a short proof of the first fact using the second one. 
	Clearly, $n = 2m$ is even, and $A$ is similar to a matrix
	\[
	\left (\begin{array} {cc} 0 & \Idm_m \\ C & 0  \end{array} \right )
	= \left (\begin{array} {cc} 0 & -Q \\ Q & 0  \end{array} \right )
	\left (\begin{array} {cc} R & 0 \\ 0 & -Q^{-1}  \end{array} \right )
	\]
	 where $Q$ and $R$ are symmetric. Now
	\[
	\left (\begin{array} {cc} A & 0 \\ 0 & A^+  \end{array} \right ) =
	\left (\begin{array} {cc} 0 & S \\ -S^+ & 0  \end{array} \right )
	\left (\begin{array} {cc} 0 & -T^+ \\ T & 0  \end{array} \right )
	\]
	is the product of a symplectic skew-involution and a symplectic involution.
	And if $D = (\lambda + \lambda^{-1})\Idm_n$, then 
	$P$ is the product of a strictly hyperbolic transformation with minimal polynomial $(x-\lambda)(x-\lambda^{-1})$ and a symplectic involution:
	\[
	\left (\begin{array} {cc} A & 0 \\ DT & A^+  \end{array} \right ) =
	\left (\begin{array} {cc} 0 & S \\ -S^+ & D  \end{array} \right )
	\left (\begin{array} {cc} 0 & -T^+ \\ T & 0  \end{array} \right ).
	\]	
\end{proof}

We can now prove our last theorem.

\begin{proof}[Proof of theorem \ref{theorem-3}]
	
	Let $P \in \SpG(2n,K)$. By  theorem \ref{theorem-1}, lemma \ref{lemma-12}, and \cite[Theorem 1.2]{KNielsen-2025a}, we may assume that  $n$ is odd, and $P$ is (type 1)-free. Hence $P$ is similar to a matrix
	\[
	Q = \left (\begin{array} {cccc} 0 & 0 & B_1 & 0\\  0 & 0 & 0 & B_2\\
		-B_1^{-1} & 0 & D_1 & D_2\\ 
		0  & -B_2^{-1} & D_3 & D_4\\ \end{array} \right),
	\]
	where $B_1 \in \GL(n-3, K)$ and $B_2 \in \GL(3, K)$ are symmetric. Let 
	\[
	M = \left (\begin{array} {cc} 0 & B_2\\ -B_2^{-1} & D_4 \end{array} \right),
	X = \left (\begin{array} {cc} 0 & H\\ H^{-1} & 0 \end{array} \right),
	Y = \left (\begin{array} {cc} Y_1 & Y_2\\ Y_3 & Y_4 \end{array} \right),
	\]
	where $H \in \GL(n-3,K)$ is antisymmetric, and $Y \in \SpG(6, K)$ is an involution.
	We have $M \in \SpG(6, K)$. So we can assume that $YM$ is 3-reflectional.
	Let $S = X \boxplus Y$. Then $S \in \SpG(2n, K)$ is involutory, and
	\[
	SQ = 
	\left (\begin{array} {cccc} 0 & 0 & H & 0\\  0 & Y_1 & 0 & Y_2\\
		H^{-1} & 0 & 0 & 0\\ 0  & Y_3 & 0 & Y_4\\ \end{array} \right)
	\left (\begin{array} {cccc} 0 & 0 & B_1 & 0\\  0 & 0 & 0 & B_2\\
		-B_1^{-1} & 0 & D_1 & D_2\\ 
		0  & -B_2^{-1} & D_3 & D_4\\ \end{array} \right)
	\]
	\[
	= \left (\begin{array} {cccc} -HB_1^{-1} & 0 & H D_1 & H D_2\\  0 & -Y_2 B_2^{-1} & Y_2 D_3 & Y_1 B_2 + Y_2 D_4\\
		0 & 0 & H^{-1} B_1 & 0\\ 
		0  & -Y_4 B_2^{-1} & Y_3 B_1 + Y_4 D_3 & Y_3 B_2 + Y_4 D_4\\ \end{array} \right).
	\]
	We can achieve that $\mu(-H B_1^{-1})$ =
	\begin{enumerate}
		\item $(x^4 -x^2-1)^t$ if $q = 7$ and $n-3 = 4t$,
		\item $(x^4 -x^2-1)^{t-1} (x^6+x^2+1)$ if $q = 7$ and $n-3 = 4t+2$,
		\item  $(x^4+2)^t$ if $q = 5$ and $n-3 = 4t$,
		\item $(x^4+2)^{t-1}(x^6-x^2+2)$ if $q = 5$ and $n-3 = 4t + 2$,
		\item $(x^4 -x^2-1)^t$ if $q = 3$ and $n-3 = 4t$,
		\item $(x^4 -x^2-1)^{t-1} (x^6-x^2+1)$ if $q = 3$ and $n-3 = 4t + 2 \ge 18$.
	\end{enumerate}
	(All occuring polynomials of degree 4 and 6 are irreducible).
	In any case $HB_1$ and its inverse are coprimary. Further, $HB_1$ and $YM$ are coprimary.
	If $q \ge 9$ and $2n = 4t + 6 \ge 10$, we can achieve that $\mu(H B_1) = (x^2 + \lambda)^t$, where $\lambda^2 \ne 1$ and  $x^2+ \lambda$ is prime to $\mu(YM)$.
	By lemma \ref{lemma-13}, 
		\[
	\left (\begin{array} {cc} -H B_1^{-1} & 0\\ 0 & -H^+ B_1 \end{array} \right)
	\]
	is 3-reflectional, hence $SQ$ is 3-reflectional.
\end{proof}


\section{Final remarks}

Without proof we also mention the following result:

\begin{remark} \label{remark}  
	Let $\Omega$ be the  set of noncentral involutions in $\SpG(4,3)$.
	Let $P \in \SpG(4,3)$. Then
	\begin{enumerate}
		\item $P \in  \Omega^6$.
		\item $P \not \in   \Omega^5$ if and only if $P$ or $-P$ is a transvection.
		\item $P \in  \Omega^3 \cap \Omega^4$ if and only if 
		\begin{enumerate}
			\item $\mu_P = x^2 \pm 1$ or
		    \item $\mu_P = x^4+1$ or 
		    \item $\mu_P = (x - 1)^2(x + 1)^2$ and $P$ and $-P^{-1}$ are conjugate
       \end{enumerate}  
		\item $P \in  \Omega^3 -\Omega^4$ if and only if 
		$\mu_P = (x - 1)^2(x + 1)$ or $\mu_P = (x + 1)^2(x - 1)$.
		\item $P \not \in \Omega^3 \cup \Omega^4$ if and only if 
		\begin{enumerate}
			\item $\mu_P = (x^2+1)(x\pm 1)^2$ or
			\item $\mu_P = (x - 1)^2(x + 1)^2$ and $P$ and $-P$ are conjugate.
		\end{enumerate}
	\end{enumerate}
\end{remark}

We obtained \ref{remark} in the nineties using a c-program on a BS2000 mainframe
(runtime 8 h) and recently checked it again using  the computer algebra system GAP \cite{GAP2025} on a meteorlake cpu (runtime less than 30 s). It is not difficult to prove \ref{remark} by hand once you know what to show.

Using GAP, we have also shown the following.

\begin{remark}
	$\SpG(6,3), \SpG(10,3)$, $\SpG(6,5)$ and $\SpG(6,7)$ are 4-reflectional.
\end{remark}	

It follows that $\SpG(2n,3)$ is 4-reflectional for $2n \ge 6$, and that $\SpG(2n,5)$ and $\SpG(2n,7)$ are 4-reflectional for $2n \ne 10$.

\ifdraft{\listoflabels}{}

\end{document}
\typeout{get arXiv to do 4 passes: Label(s) may have changed. Rerun}